\DeclareFontFamily{U}{mathx}{\hyphenchar\font45}
\DeclareFontShape{U}{mathx}{m}{n}{
      <5> <6> <7> <8> <9> <10>
      <10.95> <12> <14.4> <17.28> <20.74> <24.88>
      mathx10
      }{}
\DeclareSymbolFont{mathx}{U}{mathx}{m}{n}
\DeclareMathAccent{\widecheck}{0}{mathx}{"71}
\numberwithin{equation}{section}
\newtheorem{thm}{Theorem}[section]
\newtheorem{lem}[thm]{Lemma}
\newtheorem{defn}[thm]{Definition}
\newtheorem{assum}[thm]{Assumption}
\theoremstyle{remark}
\newtheorem{remark}{Remark}[section]
\theoremstyle{definition}
\newcommand\al{\alpha}
\newcommand\ga{\gamma}
\newcommand\de{\delta}
\newcommand\ve{\varepsilon}
\newcommand\e {\varepsilon}  %
\newcommand\la{\lambda}
\newcommand\De{\Delta}
\newcommand{\R}{\mathbb{R}}
\newcommand{\ZZ}{\mathbb{Z}}
\newcommand{\NN}{\mathbb{N}}
\newcommand{\pd}{\partial}
\newcommand{\na}{\nabla}
\newcommand{\EQ}[1]{\begin{equation}\begin{split} #1 \end{split}\end{equation}}
\newcommand{\EQN}[1]{\begin{equation*}\begin{split} #1 \end{split}\end{equation*}}
\begin{document}
\title{Forward Discretely Self-Similar Solutions of the MHD Equations and the Viscoelastic Navier-Stokes Equations with Damping}

\author{Chen-Chih Lai}
\date{}

\maketitle
\begin{abstract}
In this paper, we prove the existence of forward discretely self-similar solutions to the MHD equations and the viscoelastic Navier-Stokes equations with damping with large weak $L^3$ initial data. The same proving techniques are also applied to construct self-similar solutions to the MHD equations and the viscoelastic Navier-Stokes equations with damping with large weak $L^3$ initial data. This approach is based on [Z. Bradshaw and T.-P. Tsai, Ann. Henri Poincar'{e}, vol. 18, no. 3, 1095-1119, 2017].



\end{abstract}
\section{Introduction}
The main purpose of this paper is to prove the existence of forward discretely self-similar (DSS) and self-similar (SS) weak solutions of both the MHD equations and the viscoelastic Navier-Stokes equations with damping. More precisely, we construct DSS local Leray weak solutions for DSS initial data with possibly large $L^3_w$-norm, and SS local Leray solutions for $(-1)$-homogeneous initial data in $L^3_w$. Our method follows from \cite{MR3611025} and is based on the a priori bounds \eqref{eq_1.15_mhd} and \eqref{eq_1.15_vNSEd}, and the Galerkin method. To begin with, we briefly introduce the MHD equations and the viscoelastic Navier-Stokes equations.
\subsection{The incompressible MHD equations}
In a magnetofluid, the interaction between the velocity field of the fluid and the magnetic field is governed by the coupling between the Navier-Stokes equations of fluid dynamics and Maxwell's equations of electromagnetism. The fundamental equations of magentohydrodynamics (MHD) is given by
\begin{equation}\label{MHD}
\setlength\arraycolsep{1.5pt}\def\arraystretch{1.2}
\left.\begin{array}{ll}
\pd_tv-\nu_0\De v+(v\cdot\na)v-(b\cdot\na)b+\na\pi&=0\ \\
\pd_tb-\eta_0\De b+(v\cdot\na)b-(b\cdot\na)v&=0\ \\  
~~~~~~~~~~~~~~~~~~~~~~~~~~~~~~\na\cdot v =\na\cdot b&=0\  
\end{array}\right\} \text{ in }\R^3\times(0,\infty),
\end{equation}
with initial data \[v|_{t=0}=v_0\ \text{ and }\ b|_{t=0}=b_0\ \text{ in }\R^3,\]
where $u:\R^3\times(0,\infty)\to\R^3$ is the fluid velocity, $b:\R^3\times(0,\infty)\to\R^3$ is the magnetic field, and $\pi:\R^3\times(0,\infty)\to\R$ represents the fluid pressure. The constants $\nu_0>0$ and $\eta_0>0$ are the kinetic viscosity and the magnetic resistivity, respectively. For simplicity, we assume $\nu_0=\eta_0=1$ throughout this paper.

We recall that the MHD equations \eqref{MHD} is invariant under the scaling 
\EQ{
v^\la(x,t)&=\la\, v(\la x,\la^2t),\ v_0^\la(x)=\la\,v_0(\la x),\\
b^\la(x,t)&=\la\, b(\la x,\la^2t),\ \, b_0^\la(x)=\la\,b_0(\la x),\\
\pi^\la(x,t)&=\la^2\pi(\la x,\la^2t).
}
We say that a solution $(v,b,\pi)$ of \eqref{MHD} is self-similar (SS) if it satisfies the scaling invariant $v^\la=v,\,b^\la=b$ and $\pi^\la=\pi$ for all $\la>0$. The initial data $v_0$ and $b_0$ are called self-similar if $v_0^\la=v_0$ and $b_0^\la=b_0$. On the other hand, if the scaling invariant only holds for a particular $\la>0$, we say $(v,b,\pi)$ is discretely self-similar with factor $\la>1$ ($\la$-DSS). Similarly, the initial data $v_0$ and $b_0$ are said to be $\la$-DSS if $v_0^\la=v_0$ and $b_0^\la=b_0$ for this $\la>1$.

On one hand, self-similar solutions of \eqref{MHD} have a stationary characteristic in that there exists an ansatz for $(v,b)$ in terms of time-independent profile $(u,a)$. That is, 
\begin{equation}\label{eq_1.6_mhd}
v(x,t)=\frac1{\sqrt{2t}}\,u\left(\frac{x}{\sqrt{2t}}\right),\ \ \ b(x)=\frac1{\sqrt{2t}}\,a\left(\frac{x}{\sqrt{2t}}\right),\ \ \ \pi(x,t)=\frac1{2t}\,p\left(\frac{x}{\sqrt{2t}}\right).
\end{equation}
The profile $(u,a)$ solves the stationary Leray system for the MHD equations
\begin{equation}\label{eq_1.7_mhd}
\setlength\arraycolsep{1.5pt}\def\arraystretch{1.2}
\left.\begin{array}{ll}
-\De u-u-y\cdot\na u+(u\cdot\na)u-(a\cdot\na)a+\na p&=0\ \\
-\De a-a-y\cdot\na a+(u\cdot\na)a-(a\cdot\na)u&=0\ \\  
~~~~~~~~~~~~~~~~~~~~~~~~~~~~~~~~~~~~~~~~~~\na\cdot u =\na\cdot b&=0  \ 
\end{array}\right\} \text{ in }\R^3\times\R,
\end{equation}
in the variable $y=x/\sqrt{2t}$. On the other hand, discretely self-similar solutions of \eqref{MHD} are determined by the behavior on the time intervals of the form $1\le t\le\la^2$. This leads us to consider the self-similar transform 
\begin{equation}
v(x,t)=\frac1{\sqrt{2t}}\,u(y,s),\ \ \ b(x,t)=\frac1{\sqrt{2t}}\,a(y,s),\ \ \ \pi(x,t)=\frac1{2t}\,p(y,s),
\end{equation}
where 
\begin{equation}\label{xtys}
y=\frac{x}{\sqrt{2t}},\ \ \ s=\log(\sqrt{2t}).
\end{equation}
Then $(u,a,p)$ solves the time-dependent Leray system for the MHD equations 
\begin{equation}\label{eq_1.10_mhd}
\setlength\arraycolsep{1.5pt}\def\arraystretch{1.2}
\left.\begin{array}{ll}
\pd_su-\De u-u-y\cdot\na u+(u\cdot\na)u-(a\cdot\na)a+\na p&=0\ \\
\pd_sa-\De a-a-y\cdot\na a+(u\cdot\na)a-(a\cdot\na)u&=0\ \\  
~~~~~~~~~~~~~~~~~~~~~~~~~~~~~~~~~~~~~~~~~~~~~~~~\na\cdot u =\na\cdot b&=0  \ 
\end{array}\right\} \text{ in }\R^3\times\R.
\end{equation}
Note that $(v,b,\pi)$ is $\la$-DSS if and only if $(u,a,p)$ is periodic in $s$ with the period $T=\log(\la)$.

Many significant contributions have been made concerning the existence of solutions to the MHD equations \eqref{MHD}. We list only some results related to our studies. First, Duvaut and Lions \cite{MR0346289} constructed a class of global weak solutions with finite energy and a class of local strong solutions. And the unique existence of mild solutions in BMO$^{-1}$ for small initial data has been obtained in Miao-Yuan-Zhang \cite{MR2313731}. In He-Xin \cite{MR2514362}, they also constructed a class of global unique forward SS solutions for small $(-1)$-homogeneous initial data belonging to some Besov space, or the Lorentz space or pseudo-measure space. Recently, Lin-Zhang-Zhou \cite{MR3487253} constructed a class of global smooth solution for large initial data assuming some constraints on the initial data on Fourier side.

\subsection{The incompressible viscoelastic Navier-Stokes equations with damping}
The Oldroyd-type models capture the rheological phenomena of both the fluid motions and the elastic features of non-Newtonian fluids. We study the simplest case in which the relaxation and retardation times are both infinite. More specifically, we consider the following system of equations for an incompressible, viscoelastic fluid: 
\begin{equation}\label{vNSE}
\setlength\arraycolsep{1.5pt}\def\arraystretch{1.2}
\left.\begin{array}{ll}
\pd_tv-\nu_0\De v+(v\cdot\na)v-\na\cdot({\bf F}{\bf F}^\top)+\na\pi&=0\ \\
\pd_t{\bf F}+(v\cdot\na){\bf F}-(\na v){\bf F}&=0\ \\  
~~~~~~~~~~~~~~~~~~~~~~~~~~~~~~~~~~~~~~~~~~~~~~~\na\cdot v &=0\  
\end{array}\right\} \text{ in }\R^3\times(0,\infty),
\end{equation}
with initial data \[v|_{t=0}=v_0\ \text{ and }\ {\bf F}|_{t=0}={\bf F}_0\ \text{ in }\R^3,\]
where $u:\R^3\times(0,\infty)\to\R^3$ is the velocity field, ${\bf F}:\R^3\times(0,\infty)\to\R^{3\times3}$ is the local deformation tensor of the fluid, and $\pi:\R^3\times(0,\infty)\to\R$ represents the pressure. The constant $\nu_0>0$ is the kinetic viscosity. Here $(\nabla\cdot({\bf F}{\bf F}^\top))_i=\pd_j({\bf F}_{ik}{\bf F}_{jk})$ and $(\na v)_{ij}=\pd_jv_i$. For convenience, we assume $\nu_0=1$ throughout this paper.

For the existence of weak solutions for the viscoelastic Navier-Stokes equations \eqref{vNSE}, it is well-known that short-time classical solutions and global existence of classical solutions for small initial data were established by Lin-Liu-Zhang \cite{MR2165379}. Later on, the authors \cite{MR2273974,MR2393434} proved the global existence of smooth solutions to \eqref{vNSE} in the case of near-equilibrium initial data. In \cite{MR2165379}, the authors added a damping term in the equation for ${\bf F}$ of the system \eqref{vNSE} to overcome the difficulty arises from the lack of a damping mechanism on ${\bf F}$. To be more precise, they introduced the following viscoelastic Navier-Stokes equations with damping as a way to approximate solutions of \eqref{vNSE}:
\begin{equation}\label{vNSEd0}
\setlength\arraycolsep{1.5pt}\def\arraystretch{1.2}
\left.\begin{array}{ll}
\pd_tv-\De v+(v\cdot\na)v-\na\cdot({\bf F}{\bf F}^\top)+\na\pi&=0\ \\
\pd_t{\bf F}-\mu\De {\bf F}+(v\cdot\na){\bf F}-(\na v){\bf F}&=0\ \\  
~~~~~~~~~~~~~~~~~~~~~~~~~~~~~~~~~~~~~~~~~~~~\na\cdot v &=0\  
\end{array}\right\} \text{ in }\R^3\times(0,\infty),
\end{equation}
for a damping parameter $\mu>0$. Note that if $\na\cdot{\bf F}=0$ at some instance of time, then $\na\cdot{\bf F}=0$ at all later times. In fact, by taking divergence of $\eqref{vNSEd0}_2$ and using $\eqref{vNSEd0}_3$, one have the following equation for $\na\cdot F$:
\[\pd_t(\na\cdot{\bf F})+(v\cdot\na)(\na\cdot{\bf F})=\mu\De(\na\cdot{\bf F}).\]
 Hence it is natural to assume 
\EQ{
\na\cdot{\bf F}=0.
}
Because the damping parameter $\mu$ plays no role in our construction of solutions, we set throughout this paper that \[\mu=1.\] Then, columnwisely, \eqref{vNSEd0} can be rewritten as 
\begin{equation}\label{vNSEd}
\setlength\arraycolsep{1.5pt}\def\arraystretch{1.2}
\left.\begin{array}{ll}
\pd_tv-\De v+(v\cdot\na)v-\underset{n=1}{\overset{3}\sum}(f_n\cdot\na)f_n+\na\pi&=0\ \\
\pd_tf_m-\De f_m+(v\cdot\na)f_m-(f_m\cdot\na)v&=0\ \\  
~~~~~~~~~~~~~~~~~~~~~~~~~~~~~~~~~~~~\na\cdot f_m=\na\cdot v &=0\  
\end{array}\right\} \text{ in }\R^3\times(0,\infty),\ m=1,2,3,
\end{equation}
where $f_m$ is the $m$-th column vector of ${\bf F}$.

Similar to the MHD equations, the viscoelastic equations with damping \eqref{vNSEd} is invariant under the scaling 
\EQ{
v^\la(x,t)&=\la\, v(\la x,\la^2t),\ v_0^\la(x)=\la\,v_0(\la x),\\
{\bf F}^\la(x,t)&=\la\, {\bf F}(\la x,\la^2t),\ \, {\bf F}_0^\la(x)=\la\,{\bf F}_0(\la x),\\
\pi^\la(x,t)&=\la^2\pi(\la x,\la^2t).
} 
We define SS and $\la$-DSS solution to \eqref{vNSEd} in the same manner as the ones we defined for the MHD equations. Self-similar solutions of \eqref{vNSEd} is determined by time-periodic profile $(u,{\bf F})$, where
\begin{equation}\label{eq_1.6_vNSEd}
v(x,t)=\frac1{\sqrt{2t}}\,u\left(\frac{x}{\sqrt{2t}}\right),\ \ \ {\bf F}(x)=\frac1{\sqrt{2t}}\,{\bf G}\left(\frac{x}{\sqrt{2t}}\right),\ \ \ \pi(x,t)=\frac1{2t}\,p\left(\frac{x}{\sqrt{2t}}\right),
\end{equation}
which satisfy the stationary Leray system for the viscoelastic Navier-Stokes equations with damping
\begin{equation}
\setlength\arraycolsep{1.5pt}\def\arraystretch{1.2}
\left.\begin{array}{ll}
-\De u-u-y\cdot\na u+(u\cdot\na)u-\underset{n=1}{\overset{3}\sum}(g_n\cdot\na)g_n+\na p&=0\ \\
-\De g_m-g_m-y\cdot\na g_m+(u\cdot\na)g_m-(g_m\cdot\na)u&=0\ \\  
~~~~~~~~~~~~~~~~~~~~~~~~~~~~~~~~~~~~~~~~~~~~~~~~\na\cdot u =\na\cdot g_m&=0  \ 
\end{array}\right\} \text{ in }\R^3\times\R,\ m=1,2,3,
\end{equation}
where $g_m$ is the $m$-th column vector of ${\bf G}$. For discretely self-similar solutions of \eqref{vNSEd}, we consider the self-similar transform 
\begin{equation}
v(x,t)=\frac1{\sqrt{2t}}\,u(y,s),\ \ \ {\bf F}(x,t)=\frac1{\sqrt{2t}}\,{\bf G}(y,s),\ \ \ \pi(x,t)=\frac1{2t}\,p(y,s),
\end{equation}
where $x,t,y,s$ satisfy \eqref{xtys}.
Then $(u,{\bf G},p)$ solves the time-dependent Leray system for the viscoelastic Navier-Stokes equations with damping
\begin{equation}\label{eq_1.10_vNSEd}
\setlength\arraycolsep{1.5pt}\def\arraystretch{1.2}
\left.\begin{array}{ll}
\pd_su-\De u-u-y\cdot\na u+(u\cdot\na)u-\underset{n=1}{\overset{3}\sum}(g_n\cdot\na)g_n+\na p&=0\ \\
\pd_sg_m-\De g_m-g_m-y\cdot\na g_m+(u\cdot\na)g_m-(g_m\cdot\na)u&=0\ \\  
~~~~~~~~~~~~~~~~~~~~~~~~~~~~~~~~~~~~~~~~~~~~~~~~~~~~~\na\cdot u =\na\cdot g_m&=0  \ 
\end{array}\right\} \text{ in }\R^3\times\R,\ m=1,2,3,
\end{equation}
where $g_m$ is the $m$-th column vector of ${\bf G}$. Note that $(v,{\bf F},\pi)$ is $\la$-DSS if and only if $(u,{\bf G},p)$ is periodic in $s$ with the period $T=\log(\la)$.

The authors \cite{MR2165379} mentioned that passing the limit of solutions to \eqref{vNSEd0} as $\mu\to0^+$ throughout standard weak convergence methods is not able to get weak solutions of \eqref{vNSE}. Despite of that, \eqref{vNSEd0} itself is still an interesting system, and there are a few of studies on this system. For instance, Lai-Lin-Wang \cite{MR3609234} established the existence of global forward SS classical solution to \eqref{vNSEd0} for locally H\"{o}lder continuous, $(-1)$-homogeneous initial data. For regularity issues, we refer the reader to \cite{MR3032986} and \cite{MR3626232}.

\subsection{Main results and Notation}
Our first goal is to extend the notion of weak solutions to the ones with a more general initial data. To this end, we recall the definition of local Leray weak solutions of the MHD equations \eqref{MHD}, which is consistent with the concept introduced by Lemari\'{e}-Rieusset \cite{MR1938147} on the Navier-Stokes equations. Here, for $1\le q<\infty$, let $L^q_{\textup{uloc}}$ denote the space of functions in $\R^3$ with 
\[\|f\|_{L^q_{\textup{uloc}}}:=\sup_{x_0\in\R^3}\|f\|_{L^q(B_1(x_0))}<\infty.\]
\begin{defn}[Local Leray solutions of the MHD equations]\thlabel{def_loc_leray_mhd}
A pair of vector fields $(v,b)$, where $v,b:\R^3\times[0,\infty)\to\R^3$ and $v,b\in L^2_{\textup{loc}}(\R^3\times[0,\infty))$, is called a local Leray solution to \eqref{MHD} with divergence-free initial data $v_0,\,b_0\in L_{\textup{uloc}}^2$ if 
\begin{enumerate}[$(i)$]
\item there exists $\pi\in L_{\textup{loc}}^{3/2}(\R^3\times[0,\infty))$ such that $(v,b,\pi)$ is a distributional solution to \eqref{MHD},
\item $($Locally finite energy$/$enstrophy$)$ for any $R>0$, $(v,b)$ satisfies 
\EQ{\label{lfee_mhd}
&\underset{0\le t<R^2}{\textup{esssup}}\ \underset{x_0\in\R^3}{\sup}\int_{B_R(x_0)}\frac12\left(|v(x,t)|^2+|b(x,t)|^2\right)dx\\
&~~~~~~~~~~~~~~~~~~~~~~~~+\underset{x_0\in\R^3}{\sup}\int_0^{R^2}\int_{B_R(x_0)}\left(|\na v(x,t)|^2+|\na b(x,t)|^2\right)dxdt<\infty,
}
\item $($Decay at spatial infinity$)$ for any $R>0$, $(v,b)$ satisfies 
\EQ{\label{dasi_mhd}
\lim_{|x_0|\to\infty}\int_0^{R^2}\int_{B_R(x_0)}\left(|v(x,t)|^2+|b(x,t)|^2\right)dxdt=0,
}
\item $($Convergence to initial data$)$ for all compact subsets $K$ of $\R^3$ we have $v(t)\to v_0$ and $b(t)\to b_0$ in $L^2(K)$ as $t\to0^+$,
\item $($Local energy inequality$)$ for all cylinders $Q$ compactly contained in $\R^3\times(0,\infty)$ and all nonnegetive $\phi\in C^\infty_0(Q)$, we have 
\EQ{\label{lei_mhd}
2\int\int\left(|\na v|^2+|\na b|^2\right)\phi\,dxdt\le&\int\int\left(|v|^2+|b|^2\right)\left(\pd_t\phi+\De\phi\right)dxdt\\&+\int\int\left(|v|^2+|b|^2+2\pi\right)(v\cdot\na\phi)dxdt\\&-2\int\int(v\cdot b)(b\cdot\na\phi)dxdt.
}
\end{enumerate}
\end{defn}

One of our goals in this paper is to prove the following existence theorem of a class of forward discretely self-similar solutions of the MHD equations \eqref{MHD}.
\begin{thm}\thlabel{thm_1.2_mhd}
Let $v_0$ and $b_0$ be divergence-free, $\la$-DSS vector fields for some $\la>1$ and satisfy 
\EQ{\label{eq_1.12_mhd}
\|v_0\|_{L^3_w(\R^3)}\le c_0,\ \ \ \|b_0\|_{L^3_w(\R^3)}\le c_0,
}
for some constant $c_0>0$. Then there exists a $\la$-DSS local Leray solution $(v,b)$ to \eqref{MHD}. Moreover, there exists $C_0=C_0(v_0,b_0)$ so that 
\[\|v(t)-e^{t\De}v_0\|_{L^2(\R^3)}\le C_0\,t^{1/4},\ \ \ \|b(t)-e^{t\De}b_0\|_{L^2(\R^3)}\le C_0\,t^{1/4}\] for any $t\in(0,\infty)$.
\end{thm}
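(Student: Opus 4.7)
The plan is to adapt the Galerkin-based construction of Bradshaw--Tsai \cite{MR3611025} from Navier--Stokes to MHD, exploiting the cancellation of the coupling terms $(b\cdot\na)b$ and $(b\cdot\na)v$ in the combined $(v,b)$ energy identity. Via the self-similar transform \eqref{xtys}, a $\la$-DSS solution of \eqref{MHD} is exactly a $T$-periodic solution $(u,a,p)$ of \eqref{eq_1.10_mhd} with $T=\log\la$. My strategy is therefore to build such a periodic $(u,a,p)$ and then undo the transform.

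\textbf{Step 1 (perturbation splitting).} Let $U(x,t)=e^{t\De}v_0$ and $A(x,t)=e^{t\De}b_0$; since $v_0,b_0$ are $\la$-DSS, so are $U,A$, and the heat semigroup is bounded on $L^3_w$, so \eqref{eq_1.12_mhd} forces $U,A\in L^\infty_tL^3_w$. Pulling them back to self-similar variables yields $T$-periodic backgrounds $\bar U(y,s),\bar A(y,s)\in L^\infty_sL^3_w$. Set $u=\bar U+V$, $a=\bar A+W$, where $(V,W)$ is the $L^2$-based perturbation solving a coupled parabolic system with forcing driven by $\bar U,\bar A$.

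\textbf{Step 2 (Galerkin scheme for a periodic solution).} Project the perturbation system onto the span of the first $N$ divergence-free eigenfunctions of a suitable self-adjoint operator. The finite-dimensional ODE for the coefficients admits $T$-periodic orbits by a Leray--Schauder argument applied to the Poincar\'e (period) map, provided a uniform-in-$N$ a priori bound on periodic trajectories is available.

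\textbf{Step 3 (the main a priori bound --- the principal obstacle).} Test the $V$-equation with $V$ and the $W$-equation with $W$, then add. The self-similar drift contributes coercivity
\[
-\int\bigl(V+y\cdot\na V\bigr)\cdot V\,dy=\tfrac12\|V\|_{L^2}^2,
\]
and analogously for $W$, so the dissipative side controls $\tfrac12(\|V\|_{L^2}^2+\|W\|_{L^2}^2)+\|\na V\|_{L^2}^2+\|\na W\|_{L^2}^2$. The most dangerous nonlinear terms cancel:
\[
\int\bigl[(W\cdot\na)W\cdot V+(W\cdot\na)V\cdot W\bigr]dy=0,
\]
together with the analogous cancellations involving $\bar A$. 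The surviving trilinears that mix the $L^3_w$-background with the $L^2$-perturbation are handled by Lorentz-space H\"older combined with the embedding $H^1\hookrightarrow L^{6,2}$, giving, for instance,
\[
\Bigl|\int(\bar U\cdot\na V)\cdot V\,dy\Bigr|\lec \|\bar U\|_{L^3_w}\|\na V\|_{L^2}\|V\|_{L^{6,2}}\le \e\|\na V\|_{L^2}^2+C(c_0)\|V\|_{L^2}^2,
\]
which is absorbed into the dissipation. Integration over one period $[0,T]$ followed by a Gronwall-type argument yields the desired bound --- crucially, no smallness of $c_0$ is needed, the self-similar coercivity rendering the scheme effectively subcritical.

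\textbf{Step 4 (limit and verification).} By Aubin--Lions, a subsequence of the Galerkin approximations converges to a weak $T$-periodic solution $(V,W)$ in the natural energy class, and $u=\bar U+V$, $a=\bar A+W$ satisfies \eqref{eq_1.10_mhd}. Undoing the transform produces a $\la$-DSS distributional solution $(v,b,\pi)$ of \eqref{MHD}. The items of \thref{def_loc_leray_mhd} are verified as in the Navier--Stokes analogue: locally finite energy and the local energy inequality \eqref{lei_mhd} come from testing with a compactly supported weight and again invoking the coupling cancellation; decay at spatial infinity \eqref{dasi_mhd} follows from the decay of $e^{t\De}v_0,e^{t\De}b_0$ together with the global $L^2$-smallness of $(V,W)$. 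Finally, the rate $t^{1/4}$ is read directly from the self-similar scaling
\[
v(\cdot,t)-e^{t\De}v_0=(2t)^{-1/2}V\bigl(\cdot/\sqrt{2t},\log\sqrt{2t}\bigr)
\]
and the uniform bound $V\in L^\infty_sL^2$, by dimensional counting.
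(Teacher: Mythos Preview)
Your overall architecture is the same as the paper's, but Step~3 contains a genuine gap that, as written, breaks the argument for large data. The inequality you display,
\[
\Bigl|\int(\bar U\cdot\na V)\cdot V\,dy\Bigr|\lec \|\bar U\|_{L^3_w}\|\na V\|_{L^2}\|V\|_{L^{6,2}}\le \e\|\na V\|_{L^2}^2+C(c_0)\|V\|_{L^2}^2,
\]
is false in its last step. Since $\|V\|_{L^{6,2}}\lec\|\na V\|_{L^2}$ (and there is no lower-order piece), the middle quantity is $\lec c_0\|\na V\|_{L^2}^2$; Young's inequality cannot trade any of this for $\|V\|_{L^2}^2$. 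The extra $\tfrac12\|V\|_{L^2}^2$ coming from the self-similar drift is of no help here, because the bad term lives entirely at the gradient level. (Also, the particular term you wrote actually vanishes since $\na\cdot\bar U=0$; the terms that survive are of the type $(V\cdot\na V,\bar U)$, and those obey exactly the same scaling, so the issue is unchanged.) In short, your scheme as stated only closes when $c_0$ is small --- contrary to your claim that ``no smallness of $c_0$ is needed.''

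What you are missing is precisely the mechanism the paper uses to handle large $c_0$: one does \emph{not} perturb around the full heat profile $\bar U,\bar A$, but around a divergence-free localization $W,D$ of it (\thref{lem_2.5}) obtained by cutting off in a large ball $B_{R_0}$ and Bogovskii-correcting the divergence. Because $U_0,A_0$ decay in $L^q(\R^3\setminus B_R)$ for $q>3$ (\thref{assum_2.1}, verified via \thref{lem_3.4}), one can choose $R_0$ so that $\|W\|_{L^\infty_sL^q}+\|D\|_{L^\infty_sL^q}\le\de$ for a fixed small $\de$ (the paper takes $\de=\tfrac14$, $q=\tfrac{10}{3}$). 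Then the surviving quadratic terms such as $(U\cdot\na U,W)$ are bounded by $\de\|U\|_{H^1}^2$ and are absorbed regardless of $c_0$; the price is that the source $\mathcal{R}_i(W,D)$ becomes large (it depends on $R_0$), but it is linear in $(U,A)$ when paired, hence harmless for Gronwall. This localization, not the self-similar coercivity, is what renders the problem ``effectively subcritical'' for large data. A secondary omission is that the paper obtains the local energy inequality \eqref{lei_mhd} by first solving a \emph{mollified} perturbed Leray system, deriving an exact local energy identity at the approximate level, and passing to the limit with lower semicontinuity; your outline does not provide a route to \eqref{lei_mhd}.
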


Also, self-similar solutions of the MHD equations \eqref{MHD} can be constructed with $(-1)$-homogeneous initial data. Namely, we have
\begin{thm}\thlabel{thm_1.3_mhd}
Let $v_0$ and $b_0$ be divergence-free, $(-1)$-homogeneous and satisfy \eqref{eq_1.12_mhd} for some constant $c_0>0$. Then there exists a self-similar local Leray solution $(v,b)$ to \eqref{MHD}. In addition, there exists $C_0=C_0(v_0,b_0)$ such that \[\|v(t)-e^{t\De}v_0\|_{L^2(\R^3)}\le C_0\,t^{1/4},\ \ \ \|b(t)-e^{t\De}b_0\|_{L^2(\R^3)}\le C_0\,t^{1/4}\] for any $t\in(0,\infty)$.
\end{thm}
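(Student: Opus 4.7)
The plan is to derive \thref{thm_1.3_mhd} from \thref{thm_1.2_mhd} by taking the limit $\la\to 1^+$. Since a $(-1)$-homogeneous field is automatically $\la$-DSS for every $\la>1$, the hypothesis of \thref{thm_1.2_mhd} is satisfied by $(v_0,b_0)$ for any such $\la$.

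First, I set $\la_k=1+1/k$ and apply \thref{thm_1.2_mhd} to obtain a sequence $(v^{(k)},b^{(k)},\pi^{(k)})$ of $\la_k$-DSS local Leray solutions. The key observation is that the constants in \thref{thm_1.2_mhd} depend on $v_0,b_0$ only through $\|v_0\|_{L^3_w}$ and $\|b_0\|_{L^3_w}$, which are invariant under the DSS scaling and in particular bounded independently of $k$. Consequently, the locally finite energy/enstrophy bound \eqref{lfee_mhd} and the difference estimate $\|v^{(k)}(t)-e^{t\De}v_0\|_{L^2}\le C_0\,t^{1/4}$ together with its analogue for $b^{(k)}$ hold uniformly in $k$.

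Next, using standard compactness arguments --- weak-$*$ in $L^\I_t L^2_{\uloc}$, weak in $L^2_t \dot H^1_{\uloc}$, combined with time-derivative bounds read off from \eqref{MHD} and an Aubin--Lions type extraction --- I pass to a subsequence converging strongly in $L^2_{\loc}(\R^3\times[0,T])$ to a limit $(v,b)$, with $\pi^{(k)}$ converging weakly in $L^{3/2}_{\loc}$ to some $\pi$. The strong local convergence suffices to pass to the limit in the nonlinear terms $(v\cdot\na)v$, $(b\cdot\na)b$, $(v\cdot\na)b$, $(b\cdot\na)v$, and in the local energy inequality \eqref{lei_mhd}, so $(v,b,\pi)$ is a distributional solution satisfying \eqref{lfee_mhd} and \eqref{lei_mhd}. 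The spatial decay \eqref{dasi_mhd}, the attainment of the initial data, and the final difference estimate all transfer to the limit by lower semicontinuity, using the uniform heat-flow bound.

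Finally, self-similarity is established by a diagonal scaling argument. Each $(v^{(k)},b^{(k)})$ satisfies $(v^{(k)})^{\la_k^n}=v^{(k)}$ for every $n\in\ZZ$. Given any $\mu>1$, choose $n_k=\lfloor\log\mu/\log\la_k\rfloor$, so that $\la_k^{n_k}\to\mu$ as $k\to\I$. Passing to the limit in
\[v^{(k)}(x,t)=\la_k^{n_k}\,v^{(k)}(\la_k^{n_k}x,\la_k^{2n_k}t)\]
via the strong local convergence yields $v(x,t)=\mu\,v(\mu x,\mu^2 t)$, and analogously for $b$; hence $(v,b)$ is self-similar. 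The hardest step will be securing compactness strong enough in both space and time --- especially near $t=0$ --- to legitimately pass to the limit both in the nonlinear terms and in the above scaling identity for every $\mu>1$; the uniform bound $\|v^{(k)}(t)-e^{t\De}v_0\|_{L^2}\le C_0\,t^{1/4}$ provides the needed equicontinuity at the initial time.
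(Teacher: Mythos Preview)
Your approach is genuinely different from the paper's. The paper does not pass to the limit $\la\to1^+$ in \thref{thm_1.2_mhd}; instead it directly solves the \emph{stationary} Leray system \eqref{eq_1.7_mhd}. Because $(-1)$-homogeneous data give $s$-independent profiles $U_0,A_0$ (hence $s$-independent $W,D$), the paper runs the Galerkin scheme on the steady perturbed system \eqref{eq_5.4_mhd}, reducing to a finite-dimensional algebraic fixed-point problem handled by Brouwer. The resulting $(u,a,p)$ is then upgraded to smooth via stationary Stokes and elliptic regularity, and the local energy inequality \eqref{lei_mhd} is obtained by integration by parts rather than by passage to a limit. This direct route yields smoothness of the self-similar solution essentially for free (cf.\ the remark following \thref{thm_1.3_vNSEd}), which your compactness argument does not deliver without further work.

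Your limiting strategy can be made to work, but the ``key observation'' that the constants in \thref{thm_1.2_mhd} depend only on $\|v_0\|_{L^3_w},\|b_0\|_{L^3_w}$ is not what the theorem asserts: it says $C_0=C_0(v_0,b_0)$, and the proof does produce $\la$-dependent quantities, e.g.\ $\rho=C_2T/(1-e^{-T/16})$ in \eqref{eq_2.31_mhd} and the factor $\la/(\la-1)$ in the enstrophy estimate \eqref{v_enstrophy}. These do remain bounded as $\la\to1^+$ --- for $(-1)$-homogeneous data $U_0,W,D$ and hence $C_2$ are $T$-independent, $T/(1-e^{-T/16})\to16$, and $\|\na(u-U_0)\|_{L^2(0,T;L^2)}^2=O(T)$ balances the $1/(\la-1)$ --- but you must verify this by going into the proofs of \thref{thm_2.4_mhd} and Section~\ref{sect_3.1}, not by citing the theorem statement. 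With that gap filled, your Aubin--Lions compactness and the diagonal scaling argument for self-similarity are sound.
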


We would like to show similar results to \thref{thm_1.2_mhd} and \thref{thm_1.3_mhd} for the viscoelastic Navier-Stokes equations with damping \eqref{vNSEd}. For this purpose, we define analogous local Leray solutions to the viscoelastic Navier-Stokes equations with damping \eqref{vNSEd} as follows.
\begin{defn}[Local Leray solutions of the viscoelastic Navier-Stokes equations with damping]
A pair of a vector field and a tensor field $(v,{\bf F})$, where $u:\R^3\times(0,\infty)\to\R^3$, ${\bf F}:\R^3\times(0,\infty)\to\R^{3\times3}$ and $v,f_m\in L^2_{\textup{loc}}(\R^3\times[0,\infty))$ for $m=1,2,3$ with $f_m$ being the $m$-th column of ${\bf F}$, is called a local Leray solution to \eqref{vNSEd} with divergence-free initial data $v_0,\,{\bf F}_0\in L_{\textup{uloc}}^2$ if 
\begin{enumerate}[$(i)$]
\item there exists $\pi\in L_{\textup{loc}}^{3/2}(\R^3\times[0,\infty))$ such that $(v,{\bf F},\pi)$ is a distributional solution to \eqref{vNSEd},
\item $($Locally finite energy$/$enstrophy$)$ for any $R>0$, $(v,{\bf F})$ satisfies 
\EQ{\label{lfe_vNSEd}
&\underset{0\le t<R^2}{\textup{esssup}}\ \underset{x_0\in\R^3}{\sup}\int_{B_R(x_0)}\frac12\left(|v(x,t)|^2+|{\bf F}(x,t)|^2\right)dx\\
&~~~~~~~~~~~~~~~~~~~~~~+\underset{x_0\in\R^3}{\sup}\int_0^{R^2}\int_{B_R(x_0)}\left(|\na v(x,t)|^2+|\na{\bf F}(x,t)|^2\right)dxdt<\infty,
}
\item $($Decay at spatial infinity$)$ for any $R>0$, $(v,{\bf F})$ satisfies 
\EQ{\label{dasi_vNSEd}
\lim_{|x_0|\to\infty}\int_0^{R^2}\int_{B_R(x_0)}\left(|v(x,t)|^2+|{\bf F}(x,t)|^2\right)dxdt=0,
}
\item $($Convergence to initial data$)$ for all compact subsets $K$ of $\R^3$ we have $v(t)\to v_0$ and ${\bf F}(t)\to {\bf F}_0$ in $L^2(K)$ as $t\to0^+$,
\item $($Local energy inequality$)$ for all cylinders $Q$ compactly contained in $\R^3\times(0,\infty)$ and all nonnegative $\phi\in C^\infty_0(Q)$, we have 
\EQ{\label{lei_vNSEd}
2\int\int\left(|\na v|^2+|\na{\bf F}|^2\right)\phi\,dxdt\le&\int\int\left(|v|^2+|{\bf F}|^2\right)\left(\pd_t\phi+\De\phi\right)dxdt\\&+\int\int\left(|v|^2+|{\bf F}|^2+2\pi\right)(v\cdot\na\phi)dxdt\\&-2\,\underset{n=1}{\overset{3}\sum}\int\int(v\cdot f_n)(f_n\cdot\na\phi)dxdt.
}
\end{enumerate}
\end{defn}

The main theorems in this paper for the viscoelastic Navier-Stokes equations with damping can be stated as the following:
\begin{thm}\thlabel{thm_1.2_vNSEd}
Let $v_0$ and ${\bf F}_0$ be divergence-free, $\la$-DSS vector fields for some $\la>1$ and satisfy 
\EQ{\label{eq_1.12_vNSEd}
\|v_0\|_{L^3_w(\R^3)}\le c_0,\ \ \ \|{\bf F}_0\|_{L^3_w(\R^3)}\le c_0,
}
for some constant $c_0>0$. Then there exists a local Leray solution $(v,{\bf F})$ to \eqref{vNSEd} which is $\la$-DSS. Moreover, there exists $C_0=C_0(v_0,{\bf F}_0)$ so that 
\[\|v(t)-e^{t\De}v_0\|_{L^2(\R^3)}\le C_0\,t^{1/4},\ \ \ \|{\bf F}(t)-e^{t\De}{\bf F}_0\|_{L^2(\R^3)}\le C_0\,t^{1/4}\] for any $t\in(0,\infty)$.
\end{thm}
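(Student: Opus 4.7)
The plan is to adapt the proof of \thref{thm_1.2_mhd} to the viscoelastic setting, exploiting the fact that \eqref{vNSEd} shares the same Els\"asser-type algebraic structure as the MHD system once the columns $f_1, f_2, f_3$ of ${\bf F}$ are viewed as three independent magnetic-field copies. Throughout I work in the self-similar variables $(y,s)$ from \eqref{eq_1.10_vNSEd}: a $\la$-DSS local Leray solution of \eqref{vNSEd} corresponds to a $T$-periodic ($T=\log\la$) solution of the time-dependent Leray system \eqref{eq_1.10_vNSEd} with the correct initial trace.

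First I mollify $v_0, {\bf F}_0$ at a small scale in a way that preserves the $\la$-DSS scaling, and let $(\bar u, \bar{\bf G})$ denote the caloric extension of the mollified data rewritten in self-similar variables. I then seek a solution of the form $(u, {\bf G}) = (\bar u + w, \bar{\bf G} + {\bf H})$; the perturbation $(w, {\bf H})$ satisfies a modified Leray system in which $(\bar u, \bar{\bf G})$ appears as a prescribed coefficient and as a forcing. Because $v_0$ and ${\bf F}_0$ lie in $L^3_w$ and are $\la$-DSS, $(\bar u, \bar{\bf G})$ inherits pointwise scale-invariant decay and is uniformly controlled over one period.

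The next step is to derive a one-period a priori bound for $(w, {\bf H})$. Testing the perturbation equations against $(w, h_1, h_2, h_3)$ and integrating by parts yields
\[
\frac12\frac{d}{ds}\left(\|w\|_{L^2}^2 + \sum_{m=1}^{3}\|h_m\|_{L^2}^2\right) + \|\na w\|_{L^2}^2 + \sum_{m=1}^{3}\|\na h_m\|_{L^2}^2 + (\text{drift}) \le F,
\]
where the term $-y\cdot\na - 1$ produces a scaling-coercive contribution and $F$ collects forcing and cubic remainders built from $(\bar u, \bar{\bf G})$. The self-interactions $(w\cdot\na)w$ and $(h_n\cdot\na)h_n$ vanish by incompressibility, while the cross terms $(h_n\cdot\na)w - (w\cdot\na)h_n$ from the tensor equations pair with $\sum_n(h_n\cdot\na)h_n$ from the momentum equation, mirroring the MHD Els\"asser cancellation. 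The remaining linear-in-perturbation contributions are absorbed using H\"older, Gagliardo--Nirenberg and the $L^3_w$ bound from above, closing the energy inequality on one period.

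I then carry out a Galerkin approximation on a ball $B_R$ with Dirichlet data and apply the Leray--Schauder fixed-point theorem to obtain a finite-dimensional $T$-periodic approximation, the fixed-point ball being provided by the a priori bound. Sending the Galerkin dimension and then $R \to \infty$ and extracting subsequential limits via Aubin--Lions yields a $T$-periodic weak solution of \eqref{eq_1.10_vNSEd} on $\R^3\times\R$, which after undoing the self-similar transform produces the desired $\la$-DSS triple $(v,{\bf F},\pi)$. The local Leray axioms are verified in turn: \eqref{lfe_vNSEd} follows from the one-period $L^\infty_s L^2 \cap L^2_s \dot H^1$ bound combined with the DSS scaling and a covering argument; \eqref{dasi_vNSEd} follows from the same bound together with the decay of $L^3_w$ initial data; the local energy inequality \eqref{lei_vNSEd} passes to the limit by lower semicontinuity of the dissipation and strong $L^2_{\loc}$ convergence; and the $C_0 t^{1/4}$ rate of convergence to $e^{t\De}v_0$ and $e^{t\De}{\bf F}_0$ is obtained by writing the difference in self-similar variables, noting $\|v-\bar v\|_{L^2_x} = (2t)^{1/4}\|w(\cdot,s)\|_{L^2_y}$, and using the bounded one-period $L^\infty_s L^2$ estimate on $w$. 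The main obstacle will be identifying the Els\"asser-type cancellation for $(w, {\bf H})$, since the tensor structure produces nine cross terms of the form $(h_m\cdot\na)w - (w\cdot\na)h_m$ that must be summed and reorganised to match the triple sum in the momentum equation; once that algebra is in place, the remainder of the proof proceeds in direct parallel with the MHD case.
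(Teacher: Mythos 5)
The key technical device that makes the large-data result possible is \thref{lem_2.5}, and it is missing from your plan. You propose to decompose $u=\bar u+w$, $g_m=\bar g_m+h_m$ around the caloric extension $(\bar u,\bar{\bf G})$ of (mollified) initial data and then close the one-period energy estimate by absorbing the linear-in-perturbation terms via H\"older/Gagliardo--Nirenberg and the $L^3_w$ bound. This cannot work when $c_0$ is large. After integrating by parts, a typical cross term looks like $(w\cdot\na\bar u,w)=-(\bar u, w\cdot\na w)$, and the best one can do is
\[
\left|(\bar u, w\cdot\na w)\right|\le \|\bar u\|_{L^{10/3}_y}\,\|w\|_{L^{5}_y}\,\|\na w\|_{L^2_y}\lesssim \|\bar u\|_{L^{10/3}_y}\,\|w\|_{H^1_y}^2.
\]
But $\|\bar u(\cdot,s)\|_{L^{10/3}_y}\sim\|v_0\|_{L^3_w}\sim c_0$ is not small, so this does not absorb into the dissipation. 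Mollifying the initial data at a small scale does not fix this: mollification does not shrink the $L^q$ norm. What the paper does instead is introduce the cutoff background $W=\xi U_0+w$ (and $E_m=\xi G_{m,0}+e_m$) from \thref{lem_2.5}, with $\xi$ supported far from the origin and the Bogovskii-type corrector restoring divergence-freeness. That construction produces a background which still captures the large-$|y|$ behaviour of $U_0$ (so that $u-W\in L^\infty L^2\cap L^2 H^1$) but whose $L^\infty_s L^{10/3}_y$ norm is as small as desired: $\|W\|_{L^\infty L^{10/3}}\le\delta$. Choosing $\delta=\tfrac18$ yields the coefficient $\tfrac1{16}\left(7\|U_k\|_{H^1}^2+3\sum_n\|(G_n)_k\|_{H^1}^2\right)$ in \eqref{etm_2.28_vNSEd}, which can be absorbed into the quadratic terms on the left of \eqref{eq_2.27_vNSEd}. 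Without this cutoff step your a priori estimate does not close, and the Galerkin scheme and subsequent passage to the limit have nothing to anchor them.

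Two smaller points. First, the paper mollifies the convective nonlinearity (the $\eta_\ve*$ terms in \eqref{mdf_ptb_leray_vNSEd}), not the initial data; this is what makes the Galerkin ODE system \eqref{galerkin_ode_vNSEd} locally Lipschitz, and the mollification is removed at the end by a compactness argument. You should make this distinction explicit, since mollifying the data serves a different (and here unnecessary) purpose. Second, the ``nine cross terms'' you worry about are not an obstacle: each column $g_m$ interacts with $u$ exactly as $b$ does in the MHD system, so the cancellations $\sum_n((\eta_\ve*G_n)\cdot\na G_n, U)+\sum_m((\eta_\ve*G_m)\cdot\na U, G_m)=0$ and the vanishing of the convective self-terms carry over line by line; this is exactly the point of rewriting \eqref{vNSEd0} columnwise as \eqref{vNSEd}. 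Finally, your proposal omits the pressure: one needs the Calder\'on--Zygmund representation $\tilde p^\ve=\sum R_iR_j[\cdots]$ together with the uniqueness of the forced non-stationary Stokes problem to identify $\na p^\ve=\na\tilde p^\ve$ and obtain the uniform $L^{5/3}(\R^3\times[0,T])$ bound \eqref{eq_2.37_vNSEd}, which is required to pass to the limit in the local energy inequality \eqref{lei_vNSEd_leray}.
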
 
\begin{thm}\thlabel{thm_1.3_vNSEd}
Let $v_0$ and ${\bf F}_0$ be divergence-free, $(-1)$-homogeneous and satisfy \eqref{eq_1.12_vNSEd} for some constant $c_0>0$. Then there exists a self-similar local Leray solution $(v,{\bf F})$ to \eqref{vNSEd}. In addition, there exists $C_0=C_0(v_0,{\bf F}_0)$ so that 
\[\|v(t)-e^{t\De}v_0\|_{L^2(\R^3)}\le C_0\,t^{1/4},\ \ \ \|{\bf F}(t)-e^{t\De}{\bf F}_0\|_{L^2(\R^3)}\le C_0\,t^{1/4}\] for any $t\in(0,\infty)$.
\end{thm}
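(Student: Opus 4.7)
The plan is to follow the strategy used for \thref{thm_1.2_vNSEd}: since $(v_0,\mathbf{F}_0)$ is $(-1)$-homogeneous, the self-similar ansatz \eqref{eq_1.6_vNSEd} produces an $s$-independent profile $(u,\mathbf{G},p)$ solving the stationary Leray system for vNSEd, rather than a $T=\log\la$-periodic profile, but otherwise the construction by Galerkin approximation runs in parallel. The main steps are: (i) construct finite-dimensional approximations $(u^N,\mathbf{G}^N)$ on finite-dimensional subspaces of divergence-free $L^2(\R^3)$ fields with sufficient decay at infinity (to accommodate the linear growth of the Leray drift $-y\cdot\na$); (ii) establish the stationary analogue of \eqref{eq_1.15_vNSEd} uniformly in $N$; (iii) extract a weak limit; and (iv) verify the local Leray conditions for the reconstructed $(v,\mathbf{F})$ obtained from the ansatz \eqref{eq_1.6_vNSEd}. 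The finite-dimensional nonlinear system in (i) is solvable by Brouwer's fixed point theorem once the a priori bound in (ii) is in hand.

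The bound in (ii) is naturally obtained after writing $u = U_\star + \tilde u$, $\mathbf{G} = \mathbf{V}_\star + \tilde{\mathbf{G}}$, where $U_\star,\mathbf{V}_\star$ are the self-similar profiles of the linear heat extensions $e^{t\De}v_0$ and $e^{t\De}\mathbf{F}_0$. By $(-1)$-homogeneity and \eqref{eq_1.12_vNSEd}, these background profiles belong to $L^{3,\infty}(\R^3)$ with $|y|^{-1}$ pointwise decay. Testing the perturbed profile equation against $(\tilde u,\tilde{\mathbf{G}})$, using divergence-free cancellations and the positive contribution $\tfrac12\|\tilde u\|_{L^2}^2+\tfrac12\|\tilde{\mathbf{G}}\|_{L^2}^2$ produced by integration by parts of $-y\cdot\na$, and absorbing the cross terms such as $\int(U_\star\cdot\na\tilde u)\cdot\tilde u$ and their stress analogues into the dissipation $\|\na\tilde u\|_{L^2}^2+\|\na\tilde{\mathbf{G}}\|_{L^2}^2$ via Hardy-type inequalities, one arrives at the stationary form of \eqref{eq_1.15_vNSEd}. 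Weak compactness in $\dot H^1$ together with Aubin-Lions compactness in $L^2$ then yields a weak solution of the stationary profile system. The desired $t^{1/4}$ bound is an immediate consequence of the self-similar scaling, since $\|v(t)-e^{t\De}v_0\|_{L^2(\R^3)}=(2t)^{1/4}\|\tilde u\|_{L^2(\R^3)}$, and analogously for $\tilde{\mathbf{F}}$.

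The hard part will be proving \eqref{eq_1.15_vNSEd} uniformly in $N$ without any smallness hypothesis on $c_0$, while also controlling the stress-type coupling $\sum_{n=1}^3(g_n\cdot\na)g_n$ specific to \eqref{vNSEd}. Here the damping term $-\De f_m$ in the $\mathbf{F}$-equation of \eqref{vNSEd} plays a crucial role: it supplies the dissipation $\|\na\tilde{\mathbf{G}}\|_{L^2}^2$ on the left-hand side of the energy identity, symmetric with $\|\na\tilde u\|_{L^2}^2$, so that the stress coupling can be absorbed by the same Hardy-type mechanism used for the convective nonlinearity $(u\cdot\na)u$. The local energy inequality \eqref{lei_vNSEd} for the limit $(v,\mathbf{F})$ is then inherited from the Galerkin energy identity via weak lower semicontinuity of the dissipation terms, exactly as in the standard Leray construction.
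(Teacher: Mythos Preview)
Your outline follows the right overall architecture (stationary profile equation, perturbation around the heat background, Galerkin plus Brouwer, then recovery of the local Leray solution), but it has a genuine gap at the step you yourself flag as ``the hard part'': closing the a priori bound \eqref{eq_1.15_vNSEd} without any smallness of $c_0$.

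Concretely, if you perturb around the raw heat profiles $U_\star,\mathbf{V}_\star$, the cross terms that survive in the energy identity are of the type $(\tilde u\cdot\na U_\star,\tilde u)$, $(\tilde G_n\cdot\na U_\star,\tilde G_n)$, $(\tilde u\cdot\na V_{\star,n},\tilde G_n)$, etc. (Note that the example you wrote, $\int(U_\star\cdot\na\tilde u)\cdot\tilde u$, actually vanishes by $\na\cdot U_\star=0$.) For the surviving terms, a Hardy/Sobolev estimate gives at best
\[
|(\tilde u\cdot\na U_\star,\tilde u)|\lesssim \|U_\star\|_{L^{3,\infty}}\,\|\na\tilde u\|_{L^2}^2\lesssim c_0\,\|\na\tilde u\|_{L^2}^2,
\]
and similarly for the stress couplings. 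When $c_0$ is large this cannot be absorbed into the dissipation, and your scheme does not close. Invoking ``Hardy-type inequalities'' here is precisely a smallness argument in disguise.

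The paper's fix, which you are missing, is \thref{lem_2.5}: one does \emph{not} perturb around $U_\star,\mathbf{V}_\star$ directly, but around the cut-off fields $W=\xi U_0+w$ and $E_m=\xi G_{m,0}+e_m$, with $\xi$ supported outside a large ball $B_{R_0}$. Since $U_0,G_{m,0}$ satisfy \thref{assum_2.1} and hence decay in $L^q$ at infinity, choosing $R_0$ large makes $\|W\|_{L^\infty_sL^q_y},\|E_m\|_{L^\infty_sL^q_y}\le\delta$ for any prescribed $\delta$ (here $\delta=\tfrac18$), regardless of the size of $c_0$. The price is a source term $\mathcal{R}_3,\mathcal{R}_4$ that is bounded in $H^{-1}$ (not small), but this only contributes an additive constant $C_2$ to the energy estimate and does not obstruct absorption. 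With this decomposition the cross terms are bounded by $\delta(\|\tilde u\|_{H^1}^2+\sum_n\|\tilde G_n\|_{H^1}^2)$ and are absorbed; Brouwer then applies on a ball of radius $\sim\sqrt{C_2}$ independent of $k$. This is exactly the mechanism in \eqref{etm_2.28_vNSEd}--\eqref{etm_2.29_vNSEd} and \eqref{eq_5.6_mhd}.

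Two smaller remarks: for the stationary problem you only need Rellich compactness ($H^1\hookrightarrow L^2_{\mathrm{loc}}$), not Aubin--Lions; and the local energy inequality \eqref{lei_vNSEd} is obtained not by weak lower semicontinuity of a Galerkin identity but, after showing smoothness of the stationary profile via Stokes/elliptic regularity, by a direct integration by parts on the self-similar solution.
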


\begin{remark}
The solutions obtained in \thref{thm_1.3_mhd} and \thref{thm_1.3_vNSEd} are actually infinitely smooth.
\end{remark}

The following a priori bounds are the keys to construct our desired solutions. For the MHD equations, if $(u,b)$ is a solution of \eqref{eq_1.10_mhd}, then the differences $U=u-U_0$ and $A=a-A_0$, where $U_0$ and $A_0$ are heat solutions, formally satisfy
\EQ{\label{eq_1.15_mhd}
&~~~~\int_0^T\int\left(|\na U|^2+|\na A|^2+\frac12\,|U|^2+\frac12\,|A|^2\right)\\
&=\int_0^T\int\left[(U\cdot\na)U\cdot U_0+(U\cdot\na)A\cdot A_0-(A\cdot\na)U\cdot A_0-(A\cdot\na)A\cdot U_0\right]\\
&~~~~-\int_0^T\int\left[\mathcal{R}_1(U_0,A_0)\cdot U+\mathcal{R}_2(U_0,A_0)\cdot A\right],
}
where $\mathcal{R}_1(U_0,A_0)$ and $\mathcal{R}_2(U_0,A_0)$ will be given in \eqref{eq_R1_R2}. Similarly, for the viscoelastic Navier-Stokes equations with damping, if $(u,g_1,g_2,g_3)$ is a solution of \eqref{eq_1.10_vNSEd}, then the differences $U=u-U_0$ and $G_m=g_m-G_{m,0}$, $m=1,2,3$, where $U_0$ and $G_{m,0}$ are heat solutions, formally obey 
\EQ{\label{eq_1.15_vNSEd}
&~~~~\int_0^T\int\left(|\na U|^2+\sum_{n=1}^3|\na G_n|^2+\frac12\,|U|^2+\frac12\,\sum_{n=1}^3|G_n|^2\right)\\
&=\int_0^T\int\left[(U\cdot\na)U\cdot U_0+\sum_{n=1}^3(U\cdot\na)G_n\cdot G_{n,0}-\sum_{n=1}^3(G_n\cdot\na)U\cdot G_{n,0}-\sum_{n=1}^3(G_n\cdot\na)G_n\cdot U_0\right]\\
&~~~~-\int_0^T\int\left[\mathcal{R}_3(U_0,G_1,G_2,G_3)\cdot U+\sum_{n=1}^3\mathcal{R}_4(U_0,G_{n,0})\cdot G_n\right],
}
where $\mathcal{R}_3(U_0,G_1,G_2,G_3)$ and $\mathcal{R}_4(U_0,G_{n,0})$ will be given in \eqref{eq_R3_R4}. Note that all cubic terms are either vanish or cancelled out in both \eqref{eq_1.15_mhd} and \eqref{eq_1.15_vNSEd}. To control the quadratic terms, we will choose a suitable cutoff to eliminate the possibly large local behavior of $U_0,\,A_0$ and $G_{m,0}$. See \thref{lem_2.5} for more details.

The rest of this paper is organized as follows. In Sect. 2, we recall some results in \cite{MR3611025} and construct a time-periodic solution to the Leray system for the MHD equations and the viscoelastic Navier-Stokes equations with damping. In Sect. 3, we recover discretely self-similar local Leray solutions for the MHD equations and the viscoelastic Navier-Stokes equations with damping from the solutions of the corresponding Leray systems obtained in Sect. 2. In Sect. 4, we prove the existence of self-similar local Leray solutions for the MHD equations and the viscoelastic Navier-Stokes equations with damping by constructing steady-state solutions to the Leray system for the MHD equations and the viscoelastic Navier-Stokes equations with damping, respectively.\\
\\
\emph{Notation.} We define the following function spaces 
\EQN{\mathcal{V}=\{f\in C^\infty_0(\R^3;\R^3):\na\cdot f=0\},\ X=\overline{\mathcal{V}}^{H^1_0(\R^3)},\ H=\overline{\mathcal{V}}^{L^2(\R^3)}.
}
Let $(\cdot,\cdot)$ be the $L^2(\R^3)$ inner product, and $\left<\cdot,\cdot\right>$ be the dual pairing of $H^1$ and its dual space $H^{-1}$, or that for $X$ and $X^*$. We denote \[\mathcal{D}_T=\left\{\varphi\in C^\infty(\R^3\times\R;\R^3):\begin{array}{l}\na \varphi=0,\,\varphi\text{ is periodic in $s$ with period $T$},\\ \textup{spt}(\varphi(\cdot,s))\text{ is compact in }\R^3\text{ for all }s\in[0,T)\end{array}\right\}.\]
We recall the Morrey space \[M^{p,\al}=\left\{f\in L^p_{\text{loc}}:\|f\|_{M^{p,\al}}:=\sup_{x\in\R^3,\,r>0}\left[r^{-\al}\int_{B_r(x)}|f|^p\right]^{1/p}<\infty\right\},\] and the weighted $L^2$ spaces 
\[L^2_{-k/2}=\left\{f\in L^2:\int_{\R^3}\frac{|f(x)|^2}{(1+|x|)^k}\,dx<\infty\right\}.\]

\section{The Time-Periodic Leray System}
\subsection{The time-periodic Leray system for the MHD equations}
In this subsection, we study the existence of time-periodic weak solutions to the Leray system for the MHD equations
\begin{equation}\label{leray_mhd}
\begin{split}
\setlength\arraycolsep{1.5pt}\def\arraystretch{1.2}
\left.\begin{array}{ll}
\pd_su-\De u-u-y\cdot\na u+(u\cdot\na)u-(a\cdot\na)a+\na p&=0\ \\
\pd_sa-\De a-a-y\cdot\na a+(u\cdot\na)a-(a\cdot\na)u&=0\ \\  
~~~~~~~~~~~~~~~~~~~~~~~~~~~~~~~~~~~~~~~~~~~~~~~~\na\cdot u =\na\cdot b&=0  \ 
\end{array}\right\}\ \text{ in }\R^3\times\R,\\
\lim_{|y_0|\to\infty}\int_{B_1(y_0)}\left(|u(y,s)-U_0(y,s)|^2+|a(y,s)-A_0(y,s)|^2\right)dy=0\ \text{ for all }s\in\R,\\
u(\cdot,s)=u(\cdot,s+T),\ a(\cdot,s)=a(\cdot,s+T)\ \text{ in }\R^3\text{ for all }s\in\R,
\end{split}
\end{equation}
for given $T$-periodic divergence-free vector fields $U_0$ and $A_0$. 

We first revisit the assumption for the background vector field $U_0$ and the corresponding results in \cite{MR3611025}.

\begin{assum}[\cite{MR3611025} Assumption 2.1]\thlabel{assum_2.1}
$U_0\in C^1(\R^4;\R^3)$ is periodic in $s$ with period $T>0$, divergence-free and satisfies
\[\pd_sU_0-\De U_0-U_0-y\cdot U_0=0,\]
\[U_0\in L^\infty(0,T;L^4\cap L^q(\R^3)),\]  
\[\na U_0\in L^2_{loc}(\R^4),\]
\[\pd_s U_0\in L^\infty(0,T;L^{6/5}_{\textup{loc}}(\R^3)),\]
and 
\[\sup_{s\in[0,T]}\|U_0(s)\|_{L^q(\R^3\setminus B_R)}\le\Theta(R),\]
for some $q\in(3,\infty]$ and $\Theta:[0,\infty)\to[0,\infty)$ such that $\Theta(R)\to0$ as $R\to\infty$.
\end{assum}

For notational simplicity, we define the linear differential operator $\mathcal{L}$ by 
\begin{equation}
\label{diff_op_L}\mathcal{L}W=\pd_sW-\De W-W-y\cdot\na W,
\end{equation} and so \[\left<\mathcal{L}W,\zeta\right>=(\pd_sW-W-y\cdot\na W,\zeta)+(\na W,\na\zeta)\] for all $\zeta\in C^1_0(\R^3)$.

\begin{lem}[\cite{MR3611025} Lemma 2.5]\thlabel{lem_2.5}
Fix $q\in(3,\infty]$ and suppose $U_0$ satisfies \thref{assum_2.1} for this $q$. Let $Z\in C^\infty(\R^3)$ with $0\le Z\le 1,\,Z(x)=1$ for $|x|>2$ and $Z(x)=0$ for $|x|<1$. For any $\de\in(0,1)$, there exists $R_0=R_0(U_0,\de)\ge1$ so that if we define 
$\xi(y)=Z\left(\frac{y}{R_0}\right)$, and  
\[
 w(y,s)=\int_{\R^3}\na_y\,\frac1{4\pi|y-z|}\,\na_z\xi(z)\cdot U_0(z,s)dz,
\]
then
\[
W(y,s)=\xi(y)U_0(y,s)+w(y,s)
\]
 has the following properties: locally continuously differentiable in $y$ and $s$, $T$-periodic, divergence-free, $U_0-W\in L^\infty(0,T;L^2(\R^3))\cap L^2(0,T;H^1(\R^3))$, and 
\begin{equation}\label{eq_2.8}
\|W\|_{L^\infty(0,T;L^q(\R^3))}\le\de,
\end{equation}
\begin{equation}\label{eq_2.9}
\|W\|_{L^\infty(0,T;L^4(\R^3))}\le c(R_0,U_0),
\end{equation}
and 
\begin{equation}\label{eq_2.10}
\|\mathcal{L}W\|_{L^\infty(0,T;H^{-1}(\R^3))}\le c(R_0,U_0),
\end{equation}
where $c(R_0,U_0)$ depends on $R_0$ and quantities associated with $U_0$ which are finite by \thref{assum_2.1}.
\end{lem}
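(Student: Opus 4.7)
The plan is to verify the listed properties of $W = \xi U_0 + w$ in sequence, with the smallness estimate \eqref{eq_2.8} driving the choice of $R_0$ and the $H^{-1}$ bound \eqref{eq_2.10} as the main technical step.

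First I would address the structural properties. Smoothness and $T$-periodicity of $W$ follow from the smoothness of $\xi$, the $C^1$-regularity and $T$-periodicity of $U_0$, and the smoothing effect of the Newtonian potential applied to the $T$-periodic source $\nabla\xi\cdot U_0$. For the divergence-free property, observe that $w = \nabla_y\phi$ with $-\Delta\phi = \nabla\xi\cdot U_0$ (since $-1/(4\pi|y|)$ is the fundamental solution of $-\Delta$), hence $\nabla\cdot w = -\nabla\xi\cdot U_0$; combined with $\nabla\cdot(\xi U_0) = \nabla\xi\cdot U_0$ (from $\nabla\cdot U_0 = 0$), this yields $\nabla\cdot W = 0$. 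The membership $U_0 - W = (1-\xi)U_0 - w \in L^\infty_s L^2 \cap L^2_s H^1$ follows because $1-\xi$ is compactly supported and inherits regularity from $U_0$, while $w$ decays at spatial infinity like $|y|^{-2}$ (being a Riesz potential of a compactly supported source).

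Next I would establish \eqref{eq_2.8}. The local piece satisfies $\|\xi U_0(s)\|_{L^q} \le \|U_0(s)\|_{L^q(\R^3\setminus B_{R_0})} \le \Theta(R_0) < \delta/2$ once $R_0$ is large. For the corrector, $\nabla\xi$ is supported in $\{R_0 \le |y| \le 2R_0\}$ with $|\nabla\xi| \lesssim R_0^{-1}$, so $g := \nabla\xi\cdot U_0$ has $L^{q_*}$-norm tending to $0$ as $R_0\to\infty$, where $1/q_* = 1/q + 1/3$. Since $w = \nabla(-\Delta)^{-1}g$ is a Riesz potential of order one, Hardy-Littlewood-Sobolev gives $\|w\|_{L^q} \lesssim R_0^{-1}\|U_0\|_{L^{q_*}(B_{2R_0})}$, which can be made smaller than $\delta/2$ by choosing $R_0$ large (interpolating between $L^4$ and $L^q$ from \thref{assum_2.1}, with a direct pointwise kernel estimate when $q = \infty$). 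The estimate \eqref{eq_2.9} follows by the same pattern with $q$ replaced by $4$, where only an $R_0$-dependent bound is needed.

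The principal step is \eqref{eq_2.10}. Using $\mathcal{L}U_0 = 0$, a direct computation gives
\[
\mathcal{L}(\xi U_0) = -(\Delta\xi)\,U_0 - 2\,\nabla\xi\cdot\nabla U_0 - (y\cdot\nabla\xi)\,U_0,
\]
all three terms supported in $\{R_0\le|y|\le 2R_0\}$ where $|\Delta\xi| \lesssim R_0^{-2}$, $|\nabla\xi| \lesssim R_0^{-1}$, and $|y\cdot\nabla\xi| \lesssim 1$. Pairing with $\zeta\in H^1$ and, in the middle term, integrating by parts to move the derivative off $\nabla U_0$, these are bounded in $H^{-1}$ by local $L^2$-norms of $U_0$ and $\nabla U_0$ on the annulus, both finite by \thref{assum_2.1}. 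For the corrector, $-\Delta w = \nabla g$ is the distributional gradient of a compactly supported function, hence lies in $H^{-1}$, and $\partial_s w = \nabla(-\Delta)^{-1}(\nabla\xi\cdot\partial_s U_0)$ lies in $L^2 \subset H^{-1}$ by HLS applied to the $L^{6/5}_{\mathrm{loc}}$ source provided by \thref{assum_2.1}. The delicate piece is the drift $y\cdot\nabla w$: the unbounded weight $|y|$ must be absorbed by exploiting the sharp decay $|\nabla w(y)| \lesssim (1+|y|)^{-3}$ at spatial infinity (valid since $g$ has compact support), which keeps $y\cdot\nabla w$ in $L^2 \subset H^{-1}$. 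This weighted control is the step I expect to be the main obstacle, and is precisely what motivates the explicit Newtonian-kernel construction of $w$ rather than a generic Bogovskii-type corrector.
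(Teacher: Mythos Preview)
The paper does not give its own proof of this lemma: it is quoted directly from \cite{MR3611025} (Bradshaw--Tsai, Lemma~2.5) and used as a black box, so there is no in-paper argument to compare against.

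That said, your sketch is a correct reconstruction of the standard proof. The divergence cancellation, the Hardy--Littlewood--Sobolev control of $w$ (note that with $1/q_*=1/q+1/3$ one gets exactly $\|g\|_{L^{q_*}}\lesssim \Theta(R_0)$ by H\"older on the annulus against the $L^q$-tail, which is the clean way to close \eqref{eq_2.8}), and the treatment of $\mathcal{L}W$ term by term are all as in the original source. Your identification of $y\cdot\nabla w$ as the delicate piece, handled via the pointwise $|\nabla w(y)|\lesssim (1+|y|)^{-3}$ decay coming from the compactly supported source, is exactly the reason the explicit Newtonian-potential corrector is used.
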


\begin{lem}[\cite{MR3611025} Lemma 3.4]\thlabel{lem_3.4}
Suppose $v_0$ satisfies the assumption of \thref{thm_1.2_mhd} and let $x,t,y,s$ satisfy \eqref{xtys}. Then \[U_0(y,s):=\sqrt{2t}(e^{t\De}v_0)(x)\] satisfies \thref{assum_2.1} with $T=\log(\la)$ and any $q\in(3,\infty]$.
\end{lem}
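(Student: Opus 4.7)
The plan is to verify each clause of \thref{assum_2.1} in turn, using three ingredients: smoothing and scaling properties of the heat semigroup, the $\la$-DSS structure of $v_0$, and Kato-type estimates in Lorentz spaces that transfer the $L^3_w$ bound on $v_0$ to $L^q$ bounds on $U_0$. Throughout I would work with the explicit formula $U_0(y,s)=e^s V(e^s y,e^{2s}/2)$, where $V(x,t):=(e^{t\De}v_0)(x)$.

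First I would dispatch the structural requirements. Since $v_0\in L^3_w$ and the heat kernel is smooth for $t>0$, $V\in C^\infty(\R^3\times(0,\infty))$; composing with the smooth diffeomorphism $(y,s)\mapsto(e^s y,e^{2s}/2)$ yields $U_0\in C^\infty(\R^4;\R^3)$, in particular $C^1$. The divergence-free condition passes from $v_0$ to $e^{t\De}v_0$ (the heat semigroup commutes with $\na$) and hence to $U_0$. For $\mathcal{L}U_0=0$, a chain-rule computation using $\pd_t V=\De_x V$, $\na_y U_0=e^{2s}\na_x V$, and $\De_y U_0=e^{3s}\De_x V$ gives $\pd_s U_0 = U_0 + y\cdot\na_y U_0 + \De_y U_0$. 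For $T$-periodicity with $T=\log\la$, I would use the scaling identity $(e^{t\De}f_\la)(x)=\la(e^{\la^2 t\De}f)(\la x)$ (for $f_\la(x):=\la f(\la x)$), which for the DSS data $v_0=(v_0)_\la$ becomes $(e^{\la^2 t\De}v_0)(\la x)=\la^{-1}(e^{t\De}v_0)(x)$; since $s\mapsto s+T$ corresponds to $t\mapsto\la^2 t$ at the same $y$ and to $x\mapsto\la x$, this identity yields $U_0(y,s+T)=\la\sqrt{2t}(e^{\la^2 t\De}v_0)(\la x)=\sqrt{2t}(e^{t\De}v_0)(x)=U_0(y,s)$.

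The quantitative estimates rest on the Kato--Lorentz bound $\|e^{t\De}v_0\|_{L^q(\R^3)}\le C_q\, t^{-1/2+3/(2q)}\|v_0\|_{L^3_w}$ for $q\in(3,\infty]$, obtained from Young's convolution inequality in Lorentz spaces against the heat kernel $G_t\in L^a$ with $1/a=2/3+1/q$. Changing variables $x=\sqrt{2t}\,y$ gives $\|U_0(s)\|_{L^q(\R^3)}=(2t)^{1/2-3/(2q)}\|e^{t\De}v_0\|_{L^q(\R^3)}\le C_q\|v_0\|_{L^3_w}$ \emph{uniformly in} $s\in[0,T]$, which establishes the $L^\infty(0,T;L^4\cap L^q(\R^3))$ bound for any $q\in(3,\infty]$. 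The auxiliary bounds $\na U_0\in L^2_\loc(\R^4)$ and $\pd_s U_0\in L^\infty(0,T;L^{6/5}_\loc(\R^3))$ follow from $U_0\in C^\infty$ together with periodicity: on each compact $K\subset\R^3$, both $s\mapsto\|\na U_0(s)\|_{L^2(K)}$ and $s\mapsto\|\pd_s U_0(s)\|_{L^{6/5}(K)}$ (the latter via the equation $\pd_s U_0 = \De U_0 + U_0 + y\cdot\na U_0$) are continuous $T$-periodic functions, hence bounded on $[0,T]$.

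The step I expect to be the main obstacle is the uniform tail decay $\Theta(R):=\sup_{s\in[0,T]}\|U_0(s)\|_{L^q(\R^3\setminus B_R)}\to 0$ as $R\to\infty$. My plan is a compactness argument: the map $s\mapsto U_0(\cdot,s)\in L^q(\R^3)$ is continuous on $[0,T]$, because on this interval $t=e^{2s}/2\in[1/2,\la^2/2]$ is bounded away from zero, so fixing $t_0\in(0,1/2)$ we may write $e^{t\De}v_0=e^{(t-t_0)\De}(e^{t_0\De}v_0)$ and reduce continuity to the strong continuity of the heat semigroup on the $L^q(\R^3)$ function $e^{t_0\De}v_0$. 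Hence $\{U_0(\cdot,s):s\in[0,T]\}$ is compact in $L^q(\R^3)$, and by the Kolmogorov--Riesz compactness criterion such a set has equi-integrable tails, yielding the required modulus $\Theta$ with $\Theta(R)\to 0$.
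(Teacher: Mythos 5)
Your proof is correct. A caveat first: the paper cites this lemma from Bradshaw--Tsai \cite{MR3611025} and does not reproduce an argument, so there is no in-paper proof to compare against; I can only assess correctness and relate it to the natural route one would expect in the reference.

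The structural verifications all check out: smoothness and divergence-free pass from $e^{t\De}v_0$ to $U_0$; the chain-rule computation does give $\pd_s U_0 = U_0 + y\cdot\na_y U_0 + \De_y U_0$; and the scaling identity $(e^{\la^2 t\De}v_0)(\la x)=\la^{-1}(e^{t\De}v_0)(x)$ for $\la$-DSS $v_0$ yields $T$-periodicity with $T=\log\la$. The uniform $L^\infty_s L^q_y$ bound via the Kato--Lorentz estimate $\|e^{t\De}v_0\|_{L^q}\le C_q\,t^{-1/2+3/(2q)}\|v_0\|_{L^3_w}$ and the exactly cancelling Jacobian $(2t)^{1/2-3/(2q)}$ is also correct, and covers $L^4$ and $L^q$ simultaneously. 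For the tail decay, your compactness route (continuity of $s\mapsto U_0(\cdot,s)$ in $L^q$ on the compact interval $[0,T]$, then the easy direction of Kolmogorov--Riesz) is valid for all finite $q\in(3,\infty)$. At $q=\infty$ one should replace $L^\infty$ by $C_0(\R^3)$ --- which is where $e^{t\De}v_0$ actually lives for $t>0$ --- since neither dilations nor $e^{t\De}$ are strongly continuous on $L^\infty$; since the paper invokes the lemma only with $q=10/3$, this is cosmetic. An alternative, more elementary path for the tail estimate would be to fix $t_0\in(0,1/2)$, set $w:=e^{t_0\De}v_0\in L^q$, split $w=w\,1_{B_{R/2}}+w\,1_{B_{R/2}^c}$, and control $e^{(t-t_0)\De}$ of each piece on $\R^3\setminus B_R$ directly by Young and by Gaussian off-diagonal decay, which avoids Kolmogorov--Riesz; either way the argument closes.
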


Similar to the Navier-Stokes counterpart of time-periodic Leray system in \cite{MR3611025}, we define periodic weak solutions and suitable periodic weak solutions of \eqref{leray_mhd} as follows.

\begin{defn}[Periodic weak solution of Leray system for the MHD equations] Let $U_0$ and $A_0$ both satisfy \thref{assum_2.1}. A pair of vector fields $(u,a)$ is a periodic weak solution to \eqref{leray_mhd} if $\na\cdot u=\na\cdot b=0$, \[u-U_0,\,a-A_0\in L^\infty(0,T;L^2(\R^3))\cap L^2(0,T;H^1(\R^3)),\]
and 
\begin{equation}
\int_0^T\left[(u,\pd_s\varphi)-(\na u,\na \varphi)+(u+y\cdot\na u-u\cdot\na u+a\cdot\na a,\varphi)\right]ds=0,
\end{equation}
\begin{equation}
\int_0^T\left[(a,\pd_s\varphi)-(\na a,\na \varphi)+(a+y\cdot\na a-u\cdot\na a+a\cdot\na u,\varphi)\right]ds=0,
\end{equation}
holds for all $\varphi\in\mathcal{D}_T$.
\end{defn}

\begin{defn}[Suitable periodic weak solution of Leray system for the MHD equations] Let $U_0$ and $A_0$ both satisfy \thref{assum_2.1}. A triple $(u,a,p)$ is a suitable periodic weak solution to \eqref{leray_mhd} if $u,a,p$ are periodic in $s$ with period $T$, $(u,a)$ is a periodic weak solution to \eqref{leray_mhd}, $p\in L^{3/2}_{\textup{loc}}(\R^4)$, $(u,a,p)$ solves \eqref{leray_mhd} in the sense of distributions, and the local energy inequality holds: 
\EQ{\label{lei_mhd_leray}
\int_{\R^4}\left(\frac{|u|^2+|a|^2}2+|\na u|^2+|\na a|^2\right)\psi \,dyds\le&\int_{\R^4}\frac{|u|^2+|a|^2}2\left(\pd_s\psi+\De\psi\right)dyds\\&+\int_{\R^4}\left(\frac{|u|^2+|a|^2}2(u-y)+pu\right)\cdot\na\psi\,dyds\\&-\int_{\R^4}(u\cdot a)a\cdot\na\psi\,dyds,
} for all nonnegative $\psi\in C^\infty_0(\R^4)$.
\end{defn}

We are now ready to prove the existence of suitable periodic weak solutions of \eqref{leray_mhd}. Namely, we have
\begin{thm}[Existence of suitable periodic weak solutions to \eqref{leray_mhd}]\thlabel{thm_2.4_mhd}
Assume $U_0(y,s)$ and $A_0(y,s)$ both satisfy \thref{assum_2.1} with $q=10/3$. Then \eqref{leray_mhd} has a periodic suitable weak solution $(u,a,p)$ in $\R^4$ with period $T$.
\end{thm}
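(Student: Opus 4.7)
The plan is to mimic the Bradshaw--Tsai construction for Navier--Stokes, with the key observation that the MHD nonlinearities carry a favorable cancellation structure that makes the $(u,a)$-system behave no worse than two coupled Navier--Stokes equations. First I would apply \thref{lem_2.5} separately to $U_0$ and $A_0$ (both satisfy \thref{assum_2.1} with $q=10/3$) to obtain modified backgrounds $W$ and $\widetilde W$ which are divergence-free, $T$-periodic, agree with $U_0$ and $A_0$ outside a large ball, and satisfy $\|W\|_{L^\infty_s L^{10/3}_y}+\|\widetilde W\|_{L^\infty_s L^{10/3}_y}\le\delta$ for a small $\delta>0$ to be fixed. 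Writing $u=U+W$ and $a=A+\widetilde W$, the system \eqref{leray_mhd} becomes a perturbed Leray system for $(U,A)$ with forcing
\[
\mathcal{R}_1:=\mathcal{L}W+(W\cdot\nabla)W-(\widetilde W\cdot\nabla)\widetilde W,\qquad \mathcal{R}_2:=\mathcal{L}\widetilde W+(W\cdot\nabla)\widetilde W-(\widetilde W\cdot\nabla)W,
\]
both of which lie in $L^\infty(0,T;H^{-1}(\R^3))$ by \eqref{eq_2.9}, \eqref{eq_2.10}.

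Next I would set up a Galerkin scheme in a basis $\{\varphi_k\}\subset\mathcal{V}$ of $X$, look for approximations $(U^N,A^N)=\sum_{k\le N}(\alpha_k(s),\beta_k(s))\varphi_k$, and test the perturbed equations against the basis functions. Testing with $(U^N,A^N)$ itself and integrating in $s$ over one period $[0,T]$ yields precisely the identity \eqref{eq_1.15_mhd}: all cubic terms in $(U,A)$ either vanish (divergence-free trilinear terms $\int (U\cdot\nabla)U\cdot U$ and $\int (U\cdot\nabla)A\cdot A$) or cancel pairwise (the two mixed terms $(U\cdot\nabla)A\cdot A$ vs.\ $(A\cdot\nabla)U\cdot A$ etc.), which is the crucial algebraic point. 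The remaining mixed cubic terms in $(U,A,W)$ and $(U,A,\widetilde W)$ are controlled by H\"older with $W,\widetilde W\in L^{10/3}$ and $U,A\in L^\infty_s L^2_y\cap L^2_s L^6_y\hookrightarrow L^{10/3}_{s,y}$; choosing $\delta$ small compared to the absolute constant from Sobolev/interpolation absorbs these into the dissipation, producing a uniform bound
\[
\sup_{s}\bigl(\|U^N(s)\|_{L^2}^2+\|A^N(s)\|_{L^2}^2\bigr)+\int_0^T\bigl(\|\nabla U^N\|_{L^2}^2+\|\nabla A^N\|_{L^2}^2\bigr)ds\le M,
\]
with $M$ depending only on $U_0,A_0,\delta$.

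To enforce periodicity at the Galerkin level I would use a Brouwer-type fixed point argument (as in Bradshaw--Tsai): for each initial datum in the ball $B_M\subset\R^{2N}$ spanned by $\{\varphi_k\}_{k\le N}$, the ODE system has a unique solution on $[0,T]$, and the a priori bound above shows that the time-$T$ Poincar\'e map leaves $B_M$ invariant. Since the map is continuous, Brouwer produces a fixed point, hence a $T$-periodic Galerkin solution $(U^N,A^N)$ satisfying the same energy bound. With uniform bounds in $L^\infty_s L^2_y\cap L^2_s H^1_y$ and in $\partial_s(U^N,A^N)\in L^{4/3}_s H^{-1}_y$ (from the equation), I would extract a subsequence converging weakly-$*$ and strongly in $L^2_{s,\loc}$ via Aubin--Lions, enough to pass to the limit in every term of the weak formulation and recover a periodic weak solution $(u,a)$ of \eqref{leray_mhd}.

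Finally, to upgrade this to a \emph{suitable} periodic weak solution, I would reconstruct the pressure $p$ from $(u,a)$ by taking the divergence of the momentum equation: $-\Delta p=\partial_i\partial_j(u_iu_j-a_ia_j)$, so $p$ is given by Riesz transforms acting on $u\otimes u-a\otimes a$, yielding $p\in L^{3/2}_{\loc}(\R^4)$ by Calder\'on--Zygmund together with the bound $u,a\in L^{10/3}_{s,y,\loc}$. To obtain the local energy inequality \eqref{lei_mhd_leray}, I would multiply the Galerkin equations by $\varphi(U^N+W)\psi$ and $\varphi(A^N+\widetilde W)\psi$ respectively with $\psi\in C^\infty_0(\R^4)$, $\psi\ge 0$, derive the corresponding equality, and then use lower semicontinuity of the dissipative terms under weak convergence to get an inequality in the limit. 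The main obstacle I foresee is ensuring that the smallness of $W,\widetilde W$ in $L^{10/3}$ is genuinely sufficient to absorb all the quadratic (in $U,A$) interaction terms coming from \emph{both} equations simultaneously; unlike the Navier--Stokes case, the magnetic cross-terms $(A\cdot\nabla)\widetilde W\cdot A$ etc.\ must be balanced together, so I would carry out the energy estimate for $|U|^2+|A|^2$ combined (as already written in \eqref{eq_1.15_mhd}) rather than separately, and fix $\delta$ at the end once the absolute constants are assembled.
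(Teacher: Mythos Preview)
Your outline follows the same route as the paper---splitting off the background via \thref{lem_2.5}, running a Galerkin scheme, using the cancellation structure in \eqref{eq_1.15_mhd} together with smallness of $W,\widetilde W$ to close the a~priori bound, and applying Brouwer to the time-$T$ map for periodicity. The pressure recovery by Riesz transforms is also what the paper does. However, there is a real gap in your derivation of the local energy inequality.

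The Galerkin system holds only when tested against functions in $\mathrm{span}(h_1,\dots,h_N)$. The localized test function $(U^N+W)\psi$ is \emph{not} in that span, so you cannot ``multiply the Galerkin equations'' by it. If you first project $(U^N+W)\psi$ onto the Galerkin space, the commutator between the projection and multiplication by $\psi$ destroys the cancellations that produce the divergence-form flux terms, and you do not get a local energy identity at level $N$. This is the standard obstruction to obtaining \emph{suitable} weak solutions directly from a spectral Galerkin scheme.

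The paper handles this by inserting a second approximation layer: for each $\varepsilon>0$ it introduces a \emph{mollified} perturbed Leray system, replacing $(U\cdot\nabla)U$ by $((\eta_\varepsilon*U^\varepsilon)\cdot\nabla)U^\varepsilon$ and similarly for the other bilinear terms. Galerkin is used only to produce, after $k\to\infty$, a genuine weak solution $(U^\varepsilon,A^\varepsilon,p^\varepsilon)$ of this mollified PDE on all of $\R^3$. Because the mollified convective velocities are smooth, one may now legitimately test the mollified PDE itself against $u^\varepsilon\psi$ and $a^\varepsilon\psi$, yielding an exact local energy \emph{equality} at level $\varepsilon$ (see \eqref{A_51_mhd}). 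Finally one sends $\varepsilon\to0$: strong $L^2_{\loc}$ convergence handles every right-hand-side term, and weak lower semicontinuity of $\int(|\nabla u^\varepsilon|^2+|\nabla a^\varepsilon|^2)\psi$ turns the equality into the inequality \eqref{lei_mhd_leray}. You should insert this mollification step; the remainder of your plan is sound.
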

\begin{proof}
Fix $Z\in C^\infty(\R^3)$ with $0\le Z\le 1,\,Z(x)=1$ for $|x|>2$ and $Z(x)=0$ for $|x|<1$. 
Applying \thref{lem_2.5} with $\de=\frac14$, one can choose $R_0=R_0(U_0,A_0)\ge1$ such that letting $\xi(y)=Z\left(\frac{y}{R_0}\right)$ and setting
\begin{equation}\label{W_def}
W(y,s)=\xi(y)U_0(y,s)+w(y,s)
\end{equation}
and 
\begin{equation}\label{D_def}
D(y,s)=\xi(y)A_0(y,s)+d(y,s),
\end{equation}
where 
 \begin{equation}
 w(y,s)=\int_{\R^3}\na_y\,\frac1{4\pi|y-z|}\,\na_z\xi(z)\cdot U_0(z,s)dz
 \end{equation}
 and
  \begin{equation}
 d(y,s)=\int_{\R^3}\na_y\,\frac1{4\pi|y-z|}\,\na_z\xi(z)\cdot A_0(z,s)dz,
 \end{equation}
 both $W$ and $D$ satisfy the conclusion of \thref{lem_2.5}.
 
Using the differential operator $\mathcal{L}$ defined in \eqref{diff_op_L}, the Leray system \eqref{leray_mhd} can be written as 
\begin{equation}
\setlength\arraycolsep{1.5pt}\def\arraystretch{1.2}
\left\{\begin{array}{ll}
\mathcal{L}u+(u\cdot\na)u-(a\cdot\na)a+\na p&=0\\
\mathcal{L}a+(u\cdot\na)a-(a\cdot\na)u&=0\\
~~~~~~~~~~~~~~~~~~~~~~\na\cdot u=\na\cdot a&=0.
\end{array}\right.\end{equation}
We are looking for a solution of the form $u=U+W$ and $a=A+D$. Then $(U,A)$ must satisfy the perturbed Leray system for the MHD equations
\begin{equation}\label{ptb_leray_mhd}
\setlength\arraycolsep{1.5pt}\def\arraystretch{1.2}
\left\{\begin{array}{ll}
\mathcal{L}U+(W+U)\cdot\na U+U\cdot\na W-(D+A)\cdot\na A-A\cdot\na D+\na p&=-\mathcal{R}_1(W,D)\\
\mathcal{L}A+(W+U)\cdot\na A+U\cdot\na D-(D+A)\cdot\na U-A\cdot\na W&=-\mathcal{R}_2(W,D)\\
~~~~~~~~~~~~~~~~~~~~~~~~~~~~~~~~~~~~~~~~~~~~~~~~~~~~~~~~~~~~~~~~~\na\cdot U=\na\cdot A&=0,
\end{array}\right.
\end{equation}
where 
\begin{equation}\label{eq_R1_R2}
\setlength\arraycolsep{1.5pt}\def\arraystretch{1.2}
\left\{\begin{array}{l}
\mathcal{R}_1(W,D):=\mathcal{L}W+W\cdot\na W-D\cdot\na D\\
\mathcal{R}_2(W,D):=\mathcal{L}D+W\cdot\na D-D\cdot\na W.
\end{array}\right.
\end{equation}

We first solve the following mollified perturbed Leray system for the MHD equations for $(U^\ve,A^\ve,p^\ve)$ in $\R^3\times[0,T]$:
\begin{equation}\label{mdf_ptb_leray_mhd}
\setlength\arraycolsep{1.5pt}\def\arraystretch{1.2}
\left\{\begin{array}{ll}
&\mathcal{L}U^\ve+(W+(\eta_\ve*U^\ve))\cdot\na U^\ve+U^\ve\cdot\na W\\
&~~~~~~~~~~~~~~~~~~~~~~~~~~~-(D+(\eta_\ve*A^\ve))\cdot\na A^\ve-A^\ve\cdot\na D+\na p^\ve=-\mathcal{R}_1(W,D),\\
&\mathcal{L}A^\ve+(W+(\eta_\ve*U^\ve))\cdot\na A^\ve+U^\ve\cdot\na D\\
&~~~~~~~~~~~~~~~~~~~~~~~~~~~~~~~~~~~-(D+(\eta_\ve*A^\ve))\cdot\na U^\ve-A^\ve\cdot\na W=-\mathcal{R}_2(W,D),\\
&\,~~~~~~~~~~~~~~~~~~~~~~~~~~~~~~~~~~~~~~~~~~~~~~~~~~~~~~~~~~~~~\na\cdot U^\ve=\na\cdot A^\ve=0,
\end{array}\right.
\end{equation}
where $\eta_\ve(y)=\ve^{-3}\eta(y/\ve)$ for some fixed function $\eta\in C^\infty_0(\R^3)$ satisfying $\int_{\R^3}\eta dy=1$. The weak formulation of \eqref{mdf_ptb_leray_mhd} is 
\begin{equation}
\setlength\arraycolsep{1.5pt}\def\arraystretch{1.2}
\left\{\begin{array}{lll}
\frac{d}{ds}(U^\ve,f)&=&-(\na U^\ve,\na f)+(U^\ve+y\cdot\na U^\ve,f)-\left((\eta_\ve*U^\ve)\cdot\na U^\ve-(\eta_\ve*A^\ve)\cdot\na A^\ve,f\right)\\
&&-(W\cdot\na U^\ve+U^\ve\cdot\na W-D\cdot\na A^\ve-A^\ve\cdot\na D,f)-\left<\mathcal{R}_1(W,D),f\right>\\
\frac{d}{ds}(A^\ve,f)&=&-(\na A^\ve,\na f)+(A^\ve+y\cdot\na A^\ve,f)-\left((\eta_\ve*U^\ve)\cdot\na A^\ve-(\eta_\ve*A^\ve)\cdot\na U^\ve,f\right)\\
&&-(W\cdot\na A^\ve+U^\ve\cdot\na D-D\cdot\na U^\ve-A^\ve\cdot\na W,f)-\left<\mathcal{R}_2(W,D),f\right>
\end{array}\right.
\end{equation}
for all $f\in\mathcal{V}$ and a.e. $s\in(0,T)$.

~\\
{\bf Step 1: Construction of a solution to the mollified perturbed Leray system}

We use the Galerkin method to construct a solution of \eqref{mdf_ptb_leray_mhd}. Let $\{h_k\}_{k\in\NN}\subset\mathcal{V}$ be an orthonormal basis of $H$. Fixing a natural number $k$, we search for an approximation solution of the form $U^\ve_k(y,s)=\sum_{i=1}^k\mu^\ve_{ki}(s)h_i(y),\,A^\ve_k(y,s)=\sum_{i=1}^k\al^\ve_{ki}(s)h_i(y)$. We first prove the existence and an a priori estimate for $T$-periodic solutions $\mu^\ve_k=(\mu^\ve_{k1},\cdots,\mu^\ve_{kk}),\,\al^\ve_k=(\al^\ve_{k1},\cdots,\al^\ve_{kk})$ to the system of ODEs
\begin{equation}\label{galerkin_ode_mhd}
\setlength\arraycolsep{1.5pt}\def\arraystretch{1.2}
\left\{\begin{array}{ll}
\frac{d}{ds}\mu^\ve_{kj}&=\underset{i=1}{\overset{k}\sum}\mathscr{A}_{ij}\mu^\ve_{ki}+\underset{i=1}{\overset{k}\sum}\mathscr{B}_{ij}\al^\ve_{ki}+\underset{i,l=1}{\overset{k}\sum}\mathscr{C}^\ve_{ilj}\mu^\ve_{ki}\mu^\ve_{kl}-\underset{i,l=1}{\overset{k}\sum}\mathscr{C}^\ve_{ilj}\al^\ve_{ki}\al^\ve_{kl}+\mathscr{D}_j\\
\frac{d}{ds}\al^\ve_{kj}&=\underset{i=1}{\overset{k}\sum}\mathscr{E}_{ij}\mu^\ve_{ki}+\underset{i=1}{\overset{k}\sum}\mathscr{F}_{ij}\al^\ve_{ki}+\underset{i,l=1}{\overset{k}\sum}\mathscr{G}^\ve_{ilj}\mu^\ve_{ki}\al^\ve_{kl}+\mathscr{H}_j,
\end{array}\right.
\end{equation}
for $j=1,\cdots,k$, where 
\EQ{\label{galerkin_ode_coeff_mhd}
\mathscr{A}_{ij}&=-(\na h_i,\na h_j)+(h_i+y\cdot\na h_i,h_j)-(h_i\cdot\na W,h_j)-(W\cdot\na h_i,h_j),\\
\mathscr{B}_{ij}&=(h_i\cdot\na D,h_j)+(D\cdot\na h_i,h_j),\\
\mathscr{C}^\ve_{ilj}&=-((\eta_\ve*h_i)\cdot\na h_l,h_j),\\
\mathscr{D}_j&=-\left<\mathcal{R}_1(W,D),h_j\right>,\\
\mathscr{E}_{ij}&=-(h_i\cdot\na D,h_j)+(D\cdot\na h_i,h_j),\\
\mathscr{F}_{ij}&=-(\na h_i,\na h_j)+(h_i+y\cdot\na h_i,h_j)+(h_i\cdot\na W,h_j)-(W\cdot\na h_i,h_j),\\
\mathscr{G}^\ve_{ilj}&=-((\eta_\ve*h_i)\cdot\na h_l,h_j)+((\eta_\ve*h_l)\cdot\na h_i,h_j),\\
\mathscr{H}_j&=-\left<\mathcal{R}_2(W,D),h_j\right>.
}
Fix $k\in\NN$. For any $U^0,A^0\in\textup{span}(h_1,\cdots,h_k)$, there exist $\mu^\ve_{kj},\al^\ve_{kj}\in H^1(0,\tilde{T})$, $j=1,\cdots,k$, that uniquely solve \eqref{galerkin_ode_mhd} with initial data $\mu^\ve_{kj}(0)=(U^0,h_j),\,\al^\ve_{kj}(0)=(A^0,h_j)$, $j=1,\cdots,k$, for some $0<\tilde{T}\le T$. 

We show that $\tilde{T}=T$. To this end, we first derive \EQ{\label{eq_2.27_mhd}
&~~~~\frac12\,\frac{d}{ds}\left(\|U^\ve_k\|_{L^2}^2+\|A^\ve_k\|_{L^2}^2\right)+\frac12\left(\|U^\ve_k\|_{L^2}^2+\|A^\ve_k\|_{L^2}^2\right)+\left(\|\na U^\ve_k\|_{L^2}^2+\|\na  A^\ve_k\|_{L^2}^2\right)\\
&=-(U^\ve_k\cdot\na W-D\cdot\na A^\ve_k-A^\ve_k\cdot\na D,U^\ve_k)-(U^\ve_k\cdot\na D-D\cdot\na U^\ve_k-A^\ve_k\cdot\na W,A^\ve_k)\\
&~~~-\left<\mathcal{R}_1(W,D),U^\ve_k\right>-\left<\mathcal{R}_2(W,D),A^\ve_k\right>
,}
by multiplying the $j$-th equation of $\eqref{galerkin_ode_mhd}_1$ by $\mu^\ve_{kj}$, and multiplying the $j$-th equation of $\eqref{galerkin_ode_mhd}_2$ by $\al^\ve_{kj}$, and then sum up all $2k$ equations. In the derivation, notice that $((\eta_\ve*U^\ve)\cdot\na U^\ve,U^\ve),\,(W\cdot\na U^\ve,U^\ve),\,((\eta_\ve*U^\ve)\cdot\na A^\ve,A^\ve)$ and $(W\cdot\na A^\ve,A^\ve)$ vanish, and $((\eta_\ve*A^\ve)\cdot\na A^\ve,U^\ve)$ and $((\eta_\ve*A^\ve)\cdot\na U^\ve,A^\ve)$ are cancelled each other; thus these terms don't show up in \eqref{eq_2.27_mhd}. Using \thref{lem_2.5} with $\de=\frac14$, we get 
\EQ{\label{etm_2.28_mhd}
&~~~~\left|-(U^\ve_k\cdot\na W-D\cdot\na A^\ve_k-A^\ve_k\cdot\na D,U^\ve_k)-(U^\ve_k\cdot\na D-D\cdot\na U^\ve_k-A^\ve_k\cdot\na W,A^\ve_k)\right|\\
&\le\frac38\left(\|U^\ve_k\|_{H^1}^2+\|A^\ve_k\|_{H^1}^2\right),
}
and
\EQ{\label{etm_2.29_mhd}
\left|-\left<\mathcal{R}_1(W,D),U^\ve_k\right>-\left<\mathcal{R}_2(W,D),A^\ve_k\right>\right|\le C_2+\frac3{32}\left(\|U^\ve_k\|_{H^1}^2+\|A^\ve_k\|_{H^1}^2\right),
}
where $C_2=8\left(\|\mathcal{L}W\|_{H^{-1}}^2+\|\mathcal{L}D\|_{H^{-1}}^2+(\|W\|_{L^4}^2+\|D\|_{L^4}^2)^2\right)$ is independent of $s,T,k$ and $\varepsilon$.

Using the estimates \eqref{etm_2.28_mhd} and \eqref{etm_2.29_mhd}, we obtain from \eqref{eq_2.27_mhd} the differential inequality
\EQ{\label{eq_2.30_mhd}
\frac{d}{ds}\left(\|U^\ve_k\|_{L^2}^2+\|A^\ve_k\|_{L^2}^2\right)+\frac1{16}\left(\|U^\ve_k\|_{L^2}^2+\|A^\ve_k\|_{L^2}^2\right)+\frac1{16}\left(\|\na U^\ve_k\|_{L^2}^2+\|\na  A^\ve_k\|_{L^2}^2\right)\le C_2.
}
Applying the Gronwall inequality, we get 
\EQ{\label{eq_2.31_mhd}
e^{s/{16}}\left(\|U^\ve_k\|_{L^2}^2+\|A^\ve_k\|_{L^2}^2\right)\le&~\left(\|U^0\|_{L^2}^2+\|A^0\|_{L^2}^2\right)+\int_0^{\tilde{T}}e^{\tau/{16}}C_2\,d\tau\\
\le&~\left(\|U^0\|_{L^2}^2+\|A^0\|_{L^2}^2\right)+e^{T/{16}}C_2T
}
for all $s\in[0,\tilde{T}]$. Since the right-hand side is finite, $\tilde{T}$ is not a blow-up time and we conclude that $\tilde{T}=T$.

Choosing $\rho=\frac{C_2T}{1-e^{-T/{16}}}>0$ (independent of $k$), \eqref{eq_2.31_mhd} implies that \[\left(\|U^\ve_k\|_{L^2}^2+\|A^\ve_k\|_{L^2}^2\right)^\frac12\le\rho\] if $\left(\|U^0\|_{L^2}^2+\|A^0\|_{L^2}^2\right)^\frac12\le\rho$. Define $\mathcal{T}:B_\rho^{2k}\to B_\rho^{2k}$ by $\mathcal{T}(\mu^\ve_k(0),\al^\ve_k(0))=(\mu^\ve_k(T),\al^\ve_k(T))$, where $B_\rho^{2k}$ is the closed ball in $\R^{2k}$ of radius $\rho$ and centered at the origin. Note that the map $\mathcal{T}$ is continuous by the continuous dependence on initial conditions of the solution of ODEs. Thus, it has a fixed point by the Brouwer fixed point theorem, i.e., there exist $(\mu^\ve_k(0),\al^\ve_k(0))\in B_\rho^{2k}$ such that $(\mu^\ve_k(0),\al^\ve_k(0))=(\mu^\ve_k(T),\al^\ve_k(T))$. Let $U^0=\sum_{i=1}^k\mu^\ve_{ki}(0)h_i$ and $A^0=\sum_{i=1}^k\al^\ve_{ki}(0)h_i$. Then $U^0,A^0\in\textup{span}(h_1,\cdots,h_k)$ and $U^0=U^\ve_k(T),A^0=A^\ve_k(T)$.

With the choice of $U^0$ and $A^0$ we have $\left(\|U^\ve_k(s)\|_{L^2}^2+\|A^\ve_k(s)\|_{L^2}^2\right)^\frac12\le\rho$ for all $s\in[0,T]$. Hence
\begin{equation}
\left(\|U^\ve_k\|_{L^\infty(0,T;L^2(\R^3))}^2+\|A^\ve_k\|_{L^\infty(0,T;L^2(\R^3))}^2\right)^\frac12\le\rho.
\end{equation}
Moreover, by integrating \eqref{eq_2.30_mhd} in $s\in[0,T]$ and using $U^\ve_k(0)=U^\ve_k(T),\,A^\ve_k(0)=A^\ve_k(T)$, we get 
\begin{equation}
\frac1{16}\int_0^T\left(\|U^\ve_k(s)\|_{H^1}^2+\|A^\ve_k(s)\|_{H^1}^2\right)ds\le C_2T.
\end{equation}
Therefore, 
\begin{equation}\label{eq_2.26_mhd}
\|U^\ve_k\|_{L^\infty(0,T;L^2(\R^3))}+\|A^\ve_k\|_{L^\infty(0,T;L^2(\R^3))}+\|U^\ve_k\|_{L^2(0,T;H^1(\R^3))}+\|A^\ve_k\|_{L^2(0,T;H^1(\R^3))}\le C,
\end{equation}
where $C=\sqrt{4(\rho^2+16C_2T)}$ is independent of both $\ve$ and $k$.

Using the uniform bounded sequences $\{U^\ve_k\}_{k\in\NN}$ and $\{A^\ve_k\}_{k\in\NN}$, and a standard limiting process, we get, for all $\ve>0$, two $T$-periodic vector fields $U^\ve,A^\ve\in L^2(0,T;H^1_0(\R^3))$ (both have $\ve$-independent $L^\infty L^2$ and $L^2H^1$ bounds), a subsequence of $\{U^\ve_k\}_{k\in\NN}$, and a subsequence of $\{A^\ve_k\}_{k\in\NN}$ (still denoted by $U^\ve_k$ and $A^\ve_k$, respectively) so that
\EQ{
&U^\ve_k\rightharpoonup U^\ve,\ A^\ve_k\rightharpoonup A^\ve\ \text{ weakly in $L^2(0,T;X)$},\\
&U^\ve_k\to U^\ve,\ A^\ve_k\to A^\ve\ \text{ strongly in $L^2(0,T;L^2(K))$ for all compact sets $K\subset\R^3$},\\
&U^\ve_k(s)\rightharpoonup U^\ve(s),\ A^\ve_k(s)\rightharpoonup A^\ve(s)\ \text{ weakly in $L^2$ for all $s\in[0,T]$}.
}
The weak convergence guarantees that $U^\ve(0)=U^\ve(T)$ and $A^\ve(0)=A^\ve(T)$. Moreover, the pair $(U^\ve,A^\ve)$ is a periodic weak solution of the mollified perturbed Leray system \eqref{mdf_ptb_leray_mhd}.

~\\
{\bf Step 2: A priori estimate of the pressure in the mollified perturbed Leray system}

Note that $\na\cdot\mathcal{L}V=0$ if $\na\cdot V=0$. Therefore, by taking the divergence of $\eqref{mdf_ptb_leray_mhd}_1$, we obtain
\EQ{\label{eq_2.33_mhd}
-\De p^\ve=&\sum_{i,j=1}^k\pd_i\pd_j\left[(\eta_\ve*U^\ve_i)U^\ve_j+W_iU^\ve_j+U^\ve_i W_j+W_iW_j\right.\\
&~~~~~~~~~~~~~\left.-(\eta_\ve*A^\ve_i)A^\ve_j-D_iA^\ve_j-A^\ve_i D_j-D_iD_j\right].
}
Let 
\EQ{
\tilde{p}^\ve=&\sum_{i,j=1}^kR_iR_j\left[(\eta_\ve*U^\ve_i)U^\ve_j+W_iU^\ve_j+U^\ve_i W_j+W_iW_j\right.\\&~~~~~~~~~~~~~~~\left.-(\eta_\ve*A^\ve_i)A^\ve_j+D_iA^\ve_j+A^\ve_i D_j+D_iD_j\right],
}
where $R_i$ denote the Riesz transforms. Note that $\tilde{p}^\ve$ also satisfies \eqref{eq_2.33_mhd}. We will show that $p^\ve=\tilde{p}^\ve$ up to an additive constant by proving $\na(p^\ve-\tilde{p}^\ve)=0$. 

Let $V^\ve(x,t)=(2t)^{-1/2}U^\ve(y,s),\,\pi^\ve(x,t)=(2t)^{-1}p^\ve(y,s)$ and $\mathcal{F}^\ve(x,t)=(\mathcal{F}_1+\mathcal{F}^\ve_2)(x,t)$ where 
\begin{equation}
\mathcal{F}_1(x,t):=-\frac1{(2t)^{3/2}}(\mathcal{L}W)(y,s),
\end{equation}
\EQ{
\mathcal{F}^\ve_2(x,t):=-\frac1{(2t)^{3/2}}&\left[W\cdot\na U^\ve+U^\ve\cdot\na W+(\eta_\ve*U^\ve)\cdot\na U^\ve+W\cdot\na W\right.\\&\left.\,-D\cdot\na A^\ve-A^\ve\cdot\na D-(\eta_\ve*A^\ve)\cdot\na A^\ve-D\cdot\na D\right](y,s),
}
and $y=x/\sqrt{2t}$ and $s=\log(\sqrt{2t})$. Hence, $\mathcal{F}^\ve\in L^\infty(1,\la^2;H^{-1}(\R^3))$, $(V^\ve,\pi)$ solves the non-stationary Stokes system on $\R^3\times[1,\la^2]$ with force $\mathcal{F}^\ve$ defined by $\eqref{mdf_ptb_leray_mhd}_1$, and $V^\ve$ is in the energy class. According to the uniqueness of the solution to the forced, non-stationary Stokes system on $\R^3\times[1,\la^2]$, we can conclude that $\na\pi^\ve=\na\tilde{\pi}^\ve$ where $\tilde{\pi}^\ve=(2t)^{-1}\tilde{p}^\ve$. Therefore $\na(p^\ve-\tilde{p}^\ve)=0$.

At this stage, we may replace $p^\ve$ by $\tilde{p}^\ve$. Recall that the Riesz transforms $R_i\phi(x)=\lim_{\e\to0^+}\int_{|x-y|>\e}K_i(x-y)\phi$ are Calder\'on-Zygmund operators since $K_i(x)=\frac{x_i}{|x|^{n+1}}$ are Calder\'on-Zygmund kernels. Applying the Calder\'on-Zygmund theory, we get
\EQN{
\|p^\ve(s)\|_{L^{5/3}}\le&~C\left\|\left[(\eta_\ve*U^\ve_i)U^\ve_j+W_iU^\ve_j+U^\ve_i W_j+W_iW_j\right.\right.\\&~~~~~\left.\left.-(\eta_\ve*A^\ve_i)A^\ve_j+D_iA^\ve_j+A^\ve_i D_j+D_iD_j\right](s)\right\|_{L^{5/3}}\\\le&~C\left(\|U^\ve(s)\|_{L^{10/3}}^2+\|A^\ve(s)\|_{L^{10/3}}^2+\|W(s)\|_{L^{10/3}}^2+\|D(s)\|_{L^{10/3}}^2\right).
}
Hence we obtain the following a priori bound for $p^\ve$:
\EQ{\label{eq_2.37_mhd}
\|p^\ve\|_{L^{5/3}(\R^3\times[0,T])}\le&~C\left(\|U^\ve\|_{L^{10/3}(\R^3\times[0,T])}^2+\|A^\ve\|_{L^{10/3}(\R^3\times[0,T])}^2\right.\\&~~~~~\left.+\|W\|_{L^{10/3}(\R^3\times[0,T])}^2+\|D\|_{L^{10/3}(\R^3\times[0,T])}^2\right).
}
Recall that the sequences $\{U^\ve\}_{\ve>0}$ and $\{A^\ve\}_{\ve>0}$ are both bounded in $L^\infty L^2$ and $L^2H^1$ norms. So 
\EQ{\label{bound_U^ve_mhd}
\|U^\ve\|_{L^{10/3}(\R^3\times[0,T])}=\left\|\|U^\ve\|_{L_y^{10/3}}\right\|_{L_s^{10/3}}\le&~\left\|\|U^\ve\|_{L_y^2}^{\frac25}\|U^\ve\|_{L_y^6}^{\frac35}\right\|_{L_s^{10/3}}\\
\le&~\|U^\ve\|_{L^\infty(0,T;L^2(\R^3))}^{\frac25}\left\|\|U^\ve\|_{L_y^6}^{\frac35}\right\|_{L_s^{10/3}}\\
\le&~\|U^\ve\|_{L^\infty(0,T;L^2(\R^3))}^{\frac25}\|U^\ve\|_{L^2(0,T;L^6(\R^3))}^{\frac35}\\
\lesssim&~\|U^\ve\|_{L^\infty(0,T;L^2(\R^3))}^{\frac25}\|U^\ve\|_{L^2(0,T;H^1(\R^3))}^{\frac35}\\
\le&~C,
}
where $C$ is some constant independent of $\ve$. Similarly, we also obtain 
\EQ{\label{bound_A^ve_mhd}
\|A^\ve\|_{L^{10/3}(\R^3\times[0,T])}\le C.
}
In addition, because we are applying \thref{lem_2.5} with $q=\frac{10}3$ and $\de=\frac14$, we have $\|W\|_{L^\infty(0,T;L^{10/3}(\R^3))}\le\frac14$ and $\|D\|_{L^\infty(0,T;L^{10/3}(\R^3))}\le\frac14$. Thus, we have the esitmates
\EQ{\label{bound_WD}
\|W\|_{L^{10/3}(\R^3\times[0,T])}\le \frac14\,T^{10/3}\ \ \ \text{ and }\ \ \ \|D\|_{L^{10/3}(\R^3\times[0,T])}\le \frac14\,T^{10/3}.
}

Using the bounds \eqref{bound_U^ve_mhd}-\eqref{bound_WD}, \eqref{eq_2.37_mhd} implies that $\{p^\ve\}_{\ve>0}$ is a bounded sequence in $L^{5/3}(\R^3\times[0,T])$.

~\\
{\bf Step 3: Convergence to a suitable periodic weak solution to \eqref{leray_mhd}}

Since the sequences $\{U^\ve\}_{\ve>0}$ and $\{A^\ve\}_{\ve>0}$ are both bounded in $L^\infty(0,T;L^2(\R^3))$- and $L^2(0,T;H^1(\R^3))$- norms, there exist $U,A\in L^\infty(0,T;L^2(\R^3))\cap L^2(0,T;H^1_0(\R^3))$ and two sequences $\{U^{\ve_k}\}_{k\in\NN},\{A^{\ve_k}\}_{k\in\NN}$ such that
\EQ{\label{U_A_conv}
&U^{\ve_k}\rightharpoonup U,\ A^{\ve_k}\rightharpoonup A\ \ \ \text{ weakly in $L^2(0,T;X)$},\\
&U^{\ve_k}\to U,\ A^{\ve_k}\to A\ \ \ \text{ strongly in $L^2(0,T;L^2(K))$ for all compact sets $K\subset\R^3$},\\
&U^{\ve_k}(s)\rightharpoonup U(s),\ A^{\ve_k}(s)\rightharpoonup A(s)\ \ \ \text{ weakly in $L^2$ for all $s\in[0,T]$},
}
as $\ve_k\to0$.

On the other hand, since $\{p^{\ve_k}\}_{k\in\NN}$ is a bounded sequence in $L^{5/3}(\R^3\times[0,T])$, we have that 
\EQ{
p^{\ve_k}\rightharpoonup p\ \text{ weakly in $L^{5/3}(\R^3\times[0,T])$,}
}
for some $p\in L^{5/3}(\R^3\times[0,T])$. Let $u=U+W$ and $a=A+D$. The above convergences are enough to ensure that the triple $(u,a,p)$ solves \eqref{leray_mhd} in the sense of distributions.

It remains to check that $(u,a,p)$ satisfies the local energy inequality \eqref{lei_mhd_leray}. Note that $(u^{\ve_k},a^{\ve_k},p^{\ve_k})$, where $u^{\ve_k}=U^{\ve_k}+W$ and $a^{\ve_k}=A^{\ve_k}+D$, satisfies
\begin{equation}\label{eq_u^ve_a^ve}
\setlength\arraycolsep{1.5pt}\def\arraystretch{1.2}
\left\{\begin{array}{ll}
&\mathcal{L}u^{\ve_k}+W\cdot\na u^{\ve_k}+(\eta_{\ve_k}*U^{\ve_k})\cdot\na U^{\ve_k}+U^{\ve_k}\cdot\na W\\
&~~~~~~~~~~~~~~~~~~~~~~~~~~~~-D\cdot\na a^{\ve_k}-(\eta_{\ve_k}*A^{\ve_k})\cdot\na A^{\ve_k}-A^{\ve_k}\cdot\na D+\na p^{\ve_k}=0,\\
&\mathcal{L}a^{\ve_k}+W\cdot\na a^{\ve_k}+(\eta_{\ve_k}*U^{\ve_k})\cdot\na A^{\ve_k}+U^{\ve_k}\cdot\na D\\
&~~~~~~~~~~~~~~~~~~~~~~~~~~~~~~~~~~~~~-D\cdot\na u^{\ve_k}-(\eta_{\ve_k}*A^{\ve_k})\cdot\na U^{\ve_k}-A^{\ve_k}\cdot\na W=0.
\end{array}\right.
\end{equation}

Testing $\eqref{eq_u^ve_a^ve}_1$ and $\eqref{eq_u^ve_a^ve}_2$ with $u^{\ve_k}\psi$ and $a^{\ve_k}\psi$, respectively, where $0\le\psi\in C^\infty_0(\R^4)$ and adding them together, we get
\EQ{\label{A_51_mhd}
&~~~~\int_{\R^4}\left(\frac{|u^{\ve_k}|^2+|a^{\ve_k}|^2}2+|\na u^{\ve_k}|^2+|\na a^{\ve_k}|^2\right)\psi \,dyds\\&=\int_{\R^4}\frac{|u^{\ve_k}|^2+|a^{\ve_k}|^2}2\left(\pd_s\psi+\De\psi\right)dyds+\int_{\R^4}\frac{|u^{\ve_k}|^2+|a^{\ve_k}|^2}2\,(W-y)\cdot\na\psi\,dyds\\
&~~~+\int_{\R^4}\left(\frac{|U^{\ve_k}|^2+2(U^{\ve_k}\cdot W)+|A^{\ve_k}|^2+2(A^{\ve_k}\cdot D)}2\,\left(\eta_{\ve_k}*U^{\ve_k}\right)\right.\\
&~~~~~~~~~~~~~~~~~~~~~~~~~~~~~~~~~~~~~~~~~~~~~~~~~~~~~~~~~~~~~~~~~~~\left.+\frac{|W|^2+|D|^2}2\,U^{\ve_k}\right)\cdot\na\psi\,dyds\\
&~~~+\int_{\R^4}p^{\ve_k} u^{\ve_k}\cdot\na\psi\,dyds\\
&~~~-\int_{\R^4}\left((u^{\ve_k}\cdot a^{\ve_k})D+(U^{\ve_k}\cdot A^{\ve_k})(\eta_{\ve_k}*A^{\ve_k})+(U^{\ve_k}\cdot D)A^{\ve_k}+(W\cdot A^{\ve_k})A^{\ve_k}\right.\\
&~~~~~~~~~~~~~~~~~~~~~~~~~~~~~~~~~~~~~~~~~~~~~~~~~~~~~~~~~~~~~~~~~~~~~~~~~~\left.+(W\cdot D)A^{\ve_k}\right)\cdot\na\psi\,dyds\\
&~~~+\int_{\R^4}\left((\eta_{\ve_k}*U^{\ve_k})-U^{\ve_k}\right)\cdot(\na W\cdot U^{\ve_k}+\na D\cdot A^{\ve_k})\psi\,dyds\\
&~~~+\int_{\R^4}((\eta_{\ve_k}*A^{\ve_k})-A^{\ve_k})\cdot(\na U^{\ve_k}\cdot D+\na A^{\ve_k}\cdot W)\psi\,dyds.
}
Let $\mathcal{K}$ be a compact subset of $\R^4$. We have 
\EQN{
\|(\eta_{\ve_k}*U^{\ve_k})-U\|_{L^2(\mathcal{K})}\le&~\|(\eta_{\ve_k}*U^{\ve_k})-(\eta_{\ve_k} *U)\|_{L^2(\mathcal{K})}+\|(\eta_{\ve_k}*U)-U\|_{L^2(\mathcal{K})}\\\le&~\|U^{\ve_k}-U\|_{L^2(\mathcal{K})}+\|\eta_{\ve_k}*U-U\|_{L^2(\mathcal{K})}.
}
Since $\|(\eta_{\ve_k}*U)(s)-U(s)\|_{L_y^2}\le \|(\eta_{\ve_k}*U)(s)\|_{L_y^2}+\|U(s)\|_{L_y^2}\le2\|U(s)\|_{L_y^2}\in L_s^2(I)$ for all compact interval $I$, dominated convergence theorem implies that $\|(\eta_{\ve_k}*U)-U\|_{L^2(\mathcal{K})}\to0$ as ${\ve_k}\to0$. Together with the fact that $U^{\ve_k}\to U$ in $L^2(0,T;L^2(K))$ for all compact sets $K\subset\R^3$, we conclude that 
\EQ{\label{eta_U_conv_mhd}
\|(\eta_{\ve_k}*U^{\ve_k})-U\|_{L^2(\mathcal{K})}\to0\ \text{ as }\ \ve_k\to0\ \ \ \text{ for all compact }\mathcal{K}\subset\R^4.
} 
Similarly, we have
\EQ{\label{eta_A_conv_mhd}
\|(\eta_{\ve_k}*A^{\ve_k})-A\|_{L^2(\mathcal{K})}\to0\ \text{ as }\ \ve_k\to0\ \ \ \text{ for all compact }\mathcal{K}\subset\R^4.
}
In addition, the sequence $\{u^\ve\}_{\ve>0}$ is bounded in $L^{10/3}(\R^3\times[0,T])$ since it is bounded in $L^2(0,T;L^6(\R^3))$ and $L^\infty(0,T;L^2(\R^3))$. According to the well-known fact mentioned in the Appendix of \cite{MR673830}, 
\EQ{\label{u_strong_5/2_mhd}
u^{\ve_k}\to u\ \ \ \text{ strongly in }L^{5/2}(\mathcal{K})\ \text{ as }\ve_k\to0.
}

Combining \eqref{eta_U_conv_mhd}-\eqref{u_strong_5/2_mhd} and the convergences in \eqref{U_A_conv} with the facts that $W,\,D$ are locally differentiable and that the support of $\psi$ is compact, each term on the right hand side of \eqref{A_51_mhd} converges to the corresponding term involving $u,\,U,\,a,\,A$ and $p$. On the other hand, $\int\na |u^{\ve_k}|^2dyds$ and $\int|\na a^{\ve_k}|^2dyds$ are lower-semicontinuous as $\ve_k\to0$. This proves \eqref{lei_mhd_leray} and completes the proof of \thref{thm_2.4_mhd}.

\end{proof}

\subsection{The time-periodic Leray system for the viscoelastic Navier-Stokes equations with damping}
In this subsection, we follow the same approach as in Sect. 2.1 to construct a periodic weak solution to the Leray system for the viscoelastic Navier-Stokes equations with damping
\begin{equation}\label{leray_vNSEd}
\begin{split}
\setlength\arraycolsep{1.5pt}\def\arraystretch{1.2}
\left.\begin{array}{ll}
\pd_su-\De u-u-y\cdot\na u+(u\cdot\na)u-\underset{n=1}{\overset{3}\sum}(g_n\cdot\na)g_n+\na p&=0\ \\
\pd_sg_m-\De g_m-g_m-y\cdot\na g_m+(u\cdot\na)g_m-(g_m\cdot\na)u&=0\ \\  
~~~~~~~~~~~~~~~~~~~~~~~~~~~~~~~~~~~~~~~~~~~~~~~~~~~~~\na\cdot u =\na\cdot g_m&=0  \ 
\end{array}\right\}\text{ in }\R^3\times\R,\ m=1,2,3,\\
\lim_{|y_0|\to\infty}\int_{B_1(y_0)}\left(|u(y,s)-U_0(y,s)|^2+\underset{n=1}{\overset{3}\sum}|g_n(y,s)-G_{n,0}(y,s)|^2\right)dy=0\ \text{ for all }s\in\R,\\
u(\cdot,s)=u(\cdot,s+T),\ g_m(\cdot,s)=g_m(\cdot,s+T)\ \text{ in }\R^3\text{ for all }s\in\R,\ m=1,2,3,
\end{split}
\end{equation}
for given $T$-periodic divergence-free vector fields $U_0$ and $G_{m,0}$, $m=1,2,3$. 

Periodic weak solutions and suitable periodic weak solutions of \eqref{leray_vNSEd} are defined as follows.
\begin{defn}[Periodic weak solution of Leray system for the viscoelastic Navier-Stokes equations with damping] Let $U_0$ and $G_{m,0}$, $m=1,2,3$, satisfy \thref{assum_2.1}. A $4$-tuple of vector fields $(u,g_1,g_2,g_3)$ is a periodic weak solution to \eqref{leray_vNSEd} if for $m=1,2,3$ we have $\na\cdot u=\na\cdot g_m=0$, \[u-U_0,\,g_m-G_{m,0}\in L^\infty(0,T;L^2(\R^3))\cap L^2(0,T;H^1(\R^3)),\]
and 
\begin{equation}
\int_0^T\left[(u,\pd_s\varphi)-(\na u,\na \varphi)+\left(u+y\cdot\na u-u\cdot\na u+\underset{n=1}{\overset{3}\sum}(g_n\cdot\na)g_n,\varphi\right)\right]ds=0,
\end{equation}
\begin{equation}
\int_0^T\left[(g_m,\pd_s\varphi)-(\na g_m,\na \varphi)+(g_m+y\cdot\na g_m-u\cdot\na g_m+g_m\cdot\na u,\varphi)\right]ds=0,
\end{equation}
holds for all $\varphi\in\mathcal{D}_T$.
\end{defn}

\begin{defn}[Suitable periodic weak solution of Leray system for the viscoelastic Navier-Stokes equations with damping] Let $U_0$ and $G_{m,0}$, $m=1,2,3$, satisfy \thref{assum_2.1}.  
A $5$-tuple $(u,g_1,g_2, g_3,p)$ is a suitable periodic weak solution to \eqref{leray_vNSEd} if $u,g_1,g_2,g_3,p$ are periodic in $s$ with period $T$, $(u,g_1,g_2,g_3)$ is a periodic weak solution to \eqref{leray_vNSEd}, $p\in L^{3/2}_{\textup{loc}}(\R^4)$, $(u,g_1,g_2,g_3,p)$ solves \eqref{leray_vNSEd} in the sense of distributions, and the local energy inequality holds: 
\EQ{\label{lei_vNSEd_leray}
\int_{\R^4}\left(\frac{|u|^2+|{\bf G}|^2}2+|\na u|^2+|\na{\bf G}|^2\right)\psi \,dyds\le&\int_{\R^4}\frac{|u|^2+|{\bf G}|^2}2\left(\pd_s\psi+\De\psi\right)dyds\\&+\int_{\R^4}\left(\frac{|u|^2+|{\bf G}|^2}2(u-y)+pu\right)\cdot\na\psi\,dyds\\&-\underset{n=1}{\overset{3}\sum}\int_{\R^4}(u\cdot g_n)g_n\cdot\na\psi\,dyds,
} where ${\bf G}=(g_1,g_2,g_3)\in\R^{3\times3}$, for all nonnegative $\psi\in C^\infty_0(\R^4)$.
\end{defn}

The main result of this subsection can be stated as the following:
\begin{thm}[Existence of suitable periodic weak solutions to \eqref{leray_vNSEd}]\thlabel{thm_2.4_vNSEd}
Assume $U_0(y,s)$ and $G_{m,0}(y,s),\,m=1,2,3,$ all satisfy \thref{assum_2.1} with $q=10/3$. Then \eqref{leray_vNSEd} has a periodic suitable weak solution $(u,g_1,g_2,g_3,p)$ in $\R^4$ with period $T$.
\end{thm}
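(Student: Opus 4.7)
The plan is to mirror the three-step proof of \thref{thm_2.4_mhd}, carrying the triple $(g_1,g_2,g_3)$ along with $u$. First, apply \thref{lem_2.5} with $\de=\frac14$ to both $U_0$ and each $G_{m,0}$, $m=1,2,3$, choosing a single $R_0=R_0(U_0,G_{1,0},G_{2,0},G_{3,0})\ge1$ that works for all four background fields. This produces divergence-free, $T$-periodic $W$ and $D_1,D_2,D_3$ satisfying the conclusions of \thref{lem_2.5}. Writing $u=U+W$, $g_m=G_m+D_m$, the system \eqref{leray_vNSEd} becomes a perturbed Leray system for $(U,G_1,G_2,G_3)$ with forcing $-\mathcal{R}_3(W,D_1,D_2,D_3)$ in the $U$-equation and $-\mathcal{R}_4(W,D_m)$ in each $G_m$-equation, where $\mathcal{R}_3$ and $\mathcal{R}_4$ are the viscoelastic analogues of $\mathcal{R}_1,\mathcal{R}_2$ in \eqref{eq_R1_R2}.

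Next, mollify the nonlinearities as in \eqref{mdf_ptb_leray_mhd} and apply the Galerkin method with the basis $\{h_k\}$, searching for $(U^\ve_k,G^\ve_{1,k},G^\ve_{2,k},G^\ve_{3,k})$ with $8k$ scalar unknowns. Multiplying the $U$-equation by $U^\ve_k$, each $G_m$-equation by $G^\ve_{m,k}$, and summing the resulting four identities, one obtains an energy identity analogous to \eqref{eq_2.27_mhd}. All cubic terms drop out, either because $(W\cdot\na\zeta,\zeta)=((\eta_\ve*\cdot)\cdot\na\zeta,\zeta)=0$ for divergence-free $\zeta$, or because the pair $\sum_n((G^\ve_{n,k}\cdot\na)G^\ve_{n,k},U^\ve_k)$ and $\sum_n((G^\ve_{n,k}\cdot\na)U^\ve_k,G^\ve_{n,k})$ cancel after integration by parts (using $\na\cdot G^\ve_{n,k}=0$). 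Exactly as in \eqref{etm_2.28_mhd}--\eqref{etm_2.29_mhd}, the quadratic terms are controlled using \eqref{eq_2.8} and Young's inequality, and the forcings $\mathcal{R}_3,\mathcal{R}_4$ by their $H^{-1}$-norms, leading to a differential inequality of the form \eqref{eq_2.30_mhd} with constant $C_2'=C_2'(W,D_1,D_2,D_3)$ independent of $k,\ve$. Gronwall plus the Brouwer fixed point theorem on the map $\mathcal{T}$ sending initial data in $\overline{B}_\rho^{4k}\subset\R^{4k}$ to its time-$T$ value gives a $T$-periodic Galerkin solution. Uniform $L^\infty L^2\cap L^2 H^1$ bounds then yield, by the standard compactness argument, a $T$-periodic weak solution $(U^\ve,G^\ve_1,G^\ve_2,G^\ve_3)$ of the mollified system.

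For Step 2, take divergence of the $U^\ve$-equation to write $p^\ve$ (up to an additive constant eliminated by a forced non-stationary Stokes uniqueness argument as in the MHD proof) as a sum of double Riesz transforms of quadratic expressions in $U^\ve,W,G^\ve_m,D_m$. Calder\'on--Zygmund theory gives the $L^{5/3}_{t,y}$ bound analogous to \eqref{eq_2.37_mhd}, and the interpolation \eqref{bound_U^ve_mhd} applied to $U^\ve$ and each $G^\ve_m$ together with $\|W\|_{L^\infty_sL^{10/3}_y}\le\frac14$, $\|D_m\|_{L^\infty_sL^{10/3}_y}\le\frac14$ yields a $\ve$-independent bound for $\{p^\ve\}$ in $L^{5/3}(\R^3\times[0,T])$.

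Finally, for Step 3, extract a subsequence $\ve_k\to0$ along which $U^{\ve_k}\rightharpoonup U$, $G^{\ve_k}_m\rightharpoonup G_m$ in $L^2(0,T;X)$, strongly in $L^2_{\loc}$, and $p^{\ve_k}\rightharpoonup p$ in $L^{5/3}$. Setting $u=U+W$, $g_m=G_m+D_m$, the convergences are sufficient to pass to the limit in the distributional formulation of \eqref{leray_vNSEd}. For the local energy inequality \eqref{lei_vNSEd_leray}, test the mollified equations with $u^{\ve_k}\psi$ and $g^{\ve_k}_m\psi$, add, and repeat the bookkeeping of \eqref{A_51_mhd}: the mollification errors $(\eta_{\ve_k}*U^{\ve_k})-U$ and $(\eta_{\ve_k}*G^{\ve_k}_m)-G_m$ go to zero in $L^2_{\loc}$ by the estimate in \eqref{eta_U_conv_mhd}--\eqref{eta_A_conv_mhd}, $u^{\ve_k}\to u$ strongly in $L^{5/2}_{\loc}$ as in \eqref{u_strong_5/2_mhd}, and the lower semicontinuity of $\int|\na u^{\ve_k}|^2\psi$ and $\int|\na g^{\ve_k}_m|^2\psi$ gives the inequality in the right direction. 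The main bookkeeping obstacle, and the only place the viscoelastic case is genuinely heavier than MHD, is keeping track of the three sums over $n=1,2,3$ in the nonlinearity and in the local energy inequality; however, structurally the argument is identical, since the crucial cancellation $\sum_n(G_n\cdot\na)G_n\cdot U+\sum_n(G_n\cdot\na)U\cdot G_n=0$ mirrors the MHD cancellation involving $(a\cdot\na)a\cdot U$ and $(a\cdot\na)U\cdot a$.
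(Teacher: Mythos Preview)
Your proposal follows the paper's approach and is structurally correct, but one quantitative choice would cause the argument to fail as written: you apply \thref{lem_2.5} with $\de=\tfrac14$ (the MHD value), whereas the paper takes $\de=\tfrac18$ here. After the cancellations you describe, the surviving quadratic terms in the energy identity include $\sum_{n=1}^3(G_n\cdot\na E_n,U)$ and $\sum_{n=1}^3(U\cdot\na E_n,G_n)$, each a three-fold sum rather than the single terms $(A\cdot\na D,U)$, $(U\cdot\na D,A)$ of the MHD case. When you bound these by $\de$ times products of $H^1$ norms and distribute via Young's inequality, the coefficient in front of $\|U^\ve_k\|_{H^1}^2$ inherits that factor of three; with $\de=\tfrac14$ this coefficient exceeds $\tfrac12$, the $\tfrac12\|U^\ve_k\|_{L^2}^2$ dissipation is overrun, and the analogue of \eqref{eq_2.30_mhd} does not close. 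With $\de=\tfrac18$ the paper obtains \eqref{etm_2.28_vNSEd}, i.e.\ a bound $\tfrac1{16}\bigl(7\|U^\ve_k\|_{H^1}^2+3\sum_n\|(G^\ve_n)_k\|_{H^1}^2\bigr)$, and since $\tfrac7{16}<\tfrac12$ the differential inequality closes (with Gronwall factor $e^{-s/64}$ in place of $e^{-s/16}$). The paper flags precisely this difference in the proof of \thref{thm_1.3_vNSEd}.

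Apart from this, and the slip ``$8k$ scalar unknowns'' (you correctly write $B^{4k}_\rho$ two lines later), your outline---Galerkin plus Brouwer on $\R^{4k}$, the pressure via double Riesz transforms and Calder\'on--Zygmund with the $L^{10/3}$ interpolation, and the passage to the limit in the local energy inequality using the $L^2_{\loc}$ convergence of the mollified terms and $L^{5/2}_{\loc}$ strong convergence of $u^{\ve_k}$---matches the paper's proof and goes through without further change.
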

\begin{proof}
The proof follows from the same argument in that of \thref{thm_2.4_mhd}. Let $Z\in C^\infty(\R^3)$ with $0\le Z\le 1,\,Z(x)=1$ for $|x|>2$ and $Z(x)=0$ for $|x|<1$. 
Applying \thref{lem_2.5} with $\de=\frac18$, we are able to choose $R_0=R_0(U_0,G_{1,0},G_{2,0},G_{3,0})\ge1$ such that letting $\xi(y)=Z\left(\frac{y}{R_0}\right)$ and setting
\begin{equation}
W(y,s)=\xi(y)U_0(y,s)+w(y,s)
\end{equation}
and 
\begin{equation}
E_m(y,s)=\xi(y)G_{m,0}(y,s)+e_m(y,s),\ m=1,2,3,
\end{equation}
where 
 \begin{equation}
 w(y,s)=\int_{\R^3}\na_y\,\frac1{4\pi|y-z|}\,\na_z\xi(z)\cdot U_0(z,s)dz
 \end{equation}
 and
  \begin{equation}
 e_m(y,s)=\int_{\R^3}\na_y\,\frac1{4\pi|y-z|}\,\na_z\xi(z)\cdot G_{m,0}(z,s)dz,\ m=1,2,3,
 \end{equation}
 $W$ and $E_m,\,m=1,2,3,$ all satisfy the conclusion of \thref{lem_2.5}.

The Leray system \eqref{leray_vNSEd} can be written as 
\begin{equation}
\setlength\arraycolsep{1.5pt}\def\arraystretch{1.2}
\left\{\begin{array}{ll}
\mathcal{L}u+(u\cdot\na)u-\underset{n=1}{\overset{3}\sum}(g_n\cdot\na)g_n+\na p&=0\\
\mathcal{L}g_m+(u\cdot\na)g_m-(g_m\cdot\na)u&=0,\ m=1,2,3,\\
~~~~~~~~~~~~~~~~~~~~~~~~~~~~\na\cdot u=\na\cdot g_m&=0,\ m=1,2,3,
\end{array}\right.\end{equation}
where $\mathcal{L}$ is given in \eqref{diff_op_L}. We have to construct a solution of the form $u=U+W$ and $g_m=G_m+E_m,\,m=1,2,3$. It follows that $(U,G_1,G_2,G_3)$ satisfies the perturbed Leray system for the viscoelastic Navier-Stokes equations with damping
\begin{equation}\label{ptb_leray_vNSEd}
\setlength\arraycolsep{1.5pt}\def\arraystretch{1.2}
\left\{\begin{array}{ll}
&\mathcal{L}U+(W+U)\cdot\na U+U\cdot\na W\\
&~~~~~~~~~~~~~-\underset{n=1}{\overset{3}\sum}(E_n+G_n)\cdot\na G_n-\underset{n=1}{\overset{3}\sum}G_n\cdot\na E_n+\na p=-\mathcal{R}_3(W,E_1,E_2,E_3),\\
&\mathcal{L}G_m+(W+U)\cdot\na G_m+U\cdot\na E_m\\
&\,~~~~~~~~~~~~~~~~~~~~~~~~~~~~~~-(E_m+G_m)\cdot\na U-G_m\cdot\na W=-\mathcal{R}_4(W,E_m),\\
&~~~~~~~~~~~~~~~~~~~~~~~~~~~~~~~~~~~~~~~~~~~~~~~~~~~~\na\cdot U=\na\cdot G_m=0,
\end{array}\right.
\end{equation}
for $m=1,2,3,$ where 
\EQ{\label{eq_R3_R4}
\setlength\arraycolsep{1.5pt}\def\arraystretch{1.2}
\left\{\begin{array}{rl}
\mathcal{R}_3(W,E_1,E_2,E_3)&:=\mathcal{L}W+W\cdot\na W-\underset{n=1}{\overset{3}\sum}E_n\cdot\na E_n\\
\mathcal{R}_4(W,E_m)&:=\mathcal{L}E_m+W\cdot\na E_m-E_m\cdot\na W,\ m=1,2,3.
\end{array}\right.
}

We first solve the following mollified perturbed Leray system for the viscoelastic Navier-Stokes equations with damping for $(U^\ve,G_1^\ve,G_2^\ve,G_3^\ve,p^\ve)$ in $\R^3\times[0,T]$:
\begin{equation}\label{mdf_ptb_leray_vNSEd}
\setlength\arraycolsep{1.5pt}\def\arraystretch{1.2}
\left\{\begin{array}{ll}
&\mathcal{L}U^\ve+(W+(\eta_\ve*U^\ve))\cdot\na U^\ve+U^\ve\cdot\na W\\
&~~~~~~-\underset{n=1}{\overset{3}\sum}(E_n+(\eta_\ve*G_n^\ve))\cdot\na G_n^\ve-\underset{n=1}{\overset{3}\sum}G_n^\ve\cdot\na E_n+\na p=-\mathcal{R}_3(W,E_1,E_2,E_3),\\
&\mathcal{L}G_m^\ve+(W+(\eta_\ve*U^\ve))\cdot\na G_m^\ve+U^\ve\cdot\na E_m\\
&\,~~~~~~~~~~~~~~~~~~~~~-(E_m+(\eta_\ve*G_m^\ve))\cdot\na U^\ve-G_m^\ve\cdot\na W=-\mathcal{R}_4(W,E_m),\\
&~~~~~~~~~~~~~~~~~~~~~~~~~~~~~~~~~~~~~~~~~~~~~~~~~~~\na\cdot U^\ve=\na\cdot G_m^\ve=0,
\end{array}\right.
\end{equation}
for $m=1,2,3$, where $\eta_\ve(y)=\ve^{-3}\eta(y/\ve)$ for some fixed function $\eta\in C^\infty_0(\R^3)$ satisfying $\int_{\R^3}\eta dy=1$. It has the following weak formulation:
\begin{equation}
\setlength\arraycolsep{1.5pt}\def\arraystretch{1.2}
\left\{\begin{array}{lll}
\frac{d}{ds}(U^\ve,f)&=&-(\na U^\ve,\na f)+(U^\ve+y\cdot\na U^\ve,f)\\
&&-\left((\eta_\ve*U^\ve)\cdot\na U^\ve-\underset{n=1}{\overset{3}\sum}(\eta_\ve*G_n^\ve)\cdot\na G_n^\ve,f\right)\\
&&-\left(W\cdot\na U^\ve+U^\ve\cdot\na W-\underset{n=1}{\overset{3}\sum}E_n\cdot\na G_n^\ve-\underset{n=1}{\overset{3}\sum}G_n^\ve\cdot\na E_n,f\right)\\
&&-\left<\mathcal{R}_3(W,E_1,E_2,E_3),f\right>\\
\frac{d}{ds}(G_m^\ve,f)&=&-(\na G_m^\ve,\na f)+(G_m^\ve+y\cdot\na G_m^\ve,f)\\
&&-\left((\eta_\ve*U^\ve)\cdot\na G_m^\ve-(\eta_\ve*G_m^\ve)\cdot\na U^\ve,f\right)\\
&&-(W\cdot\na G_m^\ve+U^\ve\cdot\na E_m-E_m\cdot\na U^\ve-G_m^\ve\cdot\na W,f)\\
&&-\left<\mathcal{R}_4(W,E_m),f\right>,\ m=1,2,3,
\end{array}\right.
\end{equation}
for all $f\in\mathcal{V}$ and a.e. $s\in(0,T)$.

~\\
{\bf Step 1: Construction of a solution to the mollified perturbed Leray system}

We use the Galerkin method to construct a solution of \eqref{mdf_ptb_leray_vNSEd}. Let $\{h_k\}_{k\in\NN}\subset\mathcal{V}$ be an orthonormal basis of $H$. For a fixed $k\in\NN$, we look for an approximation solution of the form $U^\ve_k(y,s)=\sum_{i=1}^k\mu^\ve_{ki}(s)h_i(y),\,(G^\ve_m)_k(y,s)=\sum_{i=1}^k(\ga^\ve_m)_{ki}(s)h_i(y),\,m=1,2,3$. First, we prove the existence and derive an a priori bound for $T$-periodic solutions $\mu^\ve_k=(\mu^\ve_{k1},\cdots,\mu^\ve_{kk}),\,(\ga^\ve_m)_k=((\ga^\ve_m)_{k1},\cdots,(\ga^\ve_m)_{kk}),\,m=1,2,3,$ to the system of ODEs
\begin{equation}\label{galerkin_ode_vNSEd}
\setlength\arraycolsep{1.5pt}\def\arraystretch{1.2}
\left\{\begin{array}{ll}
\frac{d}{ds}\mu^\ve_{kj}&=\underset{i=1}{\overset{k}\sum}\mathscr{A}_{ij}\mu^\ve_{ki}+\underset{i=1}{\overset{k}\sum}\,\underset{n=1}{\overset{3}\sum}\tilde{\mathscr{B}}_{ijn}(\ga^\ve_n)_{ki}+\underset{i,l=1}{\overset{k}\sum}\mathscr{C}^\ve_{ilj}\mu^\ve_{ki}\mu^\ve_{kl}-\underset{i,l=1}{\overset{k}\sum}\mathscr{C}^\ve_{ilj}\underset{n=1}{\overset{3}\sum}(\ga^\ve_n)_{ki}(\ga^\ve_n)_{kl}+\tilde{\mathscr{D}}_j\\
\frac{d}{ds}(\ga^\ve_m)_{kj}&=\underset{i=1}{\overset{k}\sum}\tilde{\mathscr{E}}_{ijm}\mu^\ve_{ki}+\underset{i=1}{\overset{k}\sum}\mathscr{F}_{ij}(\ga^\ve_m)_{ki}+\underset{i,l=1}{\overset{k}\sum}\mathscr{G}^\ve_{ilj}\mu^\ve_{ki}(\ga^\ve_m)_{kl}+\tilde{\mathscr{H}}_{jm},
\end{array}\right.
\end{equation}
for $j=1,\cdots,k$, where $\mathscr{A}_{ij},\,\mathscr{C}^\ve_{ilj},\,\mathscr{F}_{ij}$ and $\mathscr{G}^\ve_{ilj}$ are the same as those in \eqref{galerkin_ode_coeff_mhd}, and 
\EQ{
\tilde{\mathscr{B}}_{ijn}&=(h_i\cdot\na E_n,h_j)+(E_n\cdot\na h_i,h_j),\\
\tilde{\mathscr{D}}_j&=-\left<\mathcal{R}_3(W,E_1,E_2,E_3),h_j\right>,\\
\tilde{\mathscr{E}}_{ijm}&=-(h_i\cdot\na E_m,h_j)+(E_m\cdot\na h_i,h_j),\\
\tilde{\mathscr{H}}_{jm}&=-\left<\mathcal{R}_4(W,E_m),h_j\right>.
}
Fix any $k\in\NN$. For any $U^0,G_m^0\in\textup{span}(h_1,\cdots,h_k),\,m=1,2,3$, there exist $\mu^\ve_{kj},(\ga^\ve_m)_{kj}\in H^1(0,\tilde{T}),\,j=1,\cdots,k$, that uniquely solve \eqref{galerkin_ode_vNSEd} with initial data $\mu^\ve_{kj}(0)=(U^0,h_j),\,(\ga^\ve_m)_{kj}(0)=(G_m^0,h_j)$, $j=1,\cdots,k$, for some $0<\tilde{T}\le T$. 

We prove that $\tilde{T}=T$. Indeed, multiplying the $j$-th equation of $\eqref{galerkin_ode_vNSEd}_1$ by $\mu^\ve_{kj}$, multiplying the $j$-th equation of $\eqref{galerkin_ode_vNSEd}_2$ by $(\ga^\ve_m)_{kj}$, and summing over all $j=1,\cdots,k$ and $m=1,2,3$, that yields
\EQ{\label{eq_2.27_vNSEd}
&~~~~\frac12\,\frac{d}{ds}\left(\|U^\ve_k\|_{L^2}^2+\underset{n=1}{\overset{3}\sum}\|(G^\ve_n)_k\|_{L^2}^2\right)+\frac12\left(\|U^\ve_k\|_{L^2}^2+\underset{n=1}{\overset{3}\sum}\|(G^\ve_n)_k\|_{L^2}^2\right)\\
&~~~~~~~~~~~~~~~~~~~~~~~~~~~~~~~~~~~~~~~~~~~~~~~~~~~~~~~~~~~~~~+\left(\|\na U^\ve_k\|_{L^2}^2+\underset{n=1}{\overset{3}\sum}\|\na  (G^\ve_n)_k\|_{L^2}^2\right)\\
&=-\left(U^\ve_k\cdot\na W-\underset{n=1}{\overset{3}\sum}(E_n\cdot\na (G^\ve_n)_k+(G^\ve_n)_k\cdot\na E_n),U^\ve_k\right)\\
&~~~~~~~~~~~~~~~~~~~~~~~~~~~~~~~~~~~~~~-\underset{n=1}{\overset{3}\sum}((U^\ve_k\cdot\na E_n-E_n\cdot\na U^\ve_k)-(G^\ve_n)_k\cdot\na W,(G^\ve_n)_k)\\
&~~~~~~~~~~~~~~~~~~~~~~~~~~~~~~~~~~~~~~-\left<\mathcal{R}_3(W,E_1,E_2,E_3),U^\ve_k\right>-\underset{n=1}{\overset{3}\sum}\left<\mathcal{R}_4(W,E_n),(G^\ve_n)_k\right>,
}
thanks to the vanishing of $((\eta_\ve*U^\ve)\cdot\na U^\ve,U^\ve),\,(W\cdot\na U^\ve,U^\ve),\,((\eta_\ve*U^\ve)\cdot\na G^\ve_m,G^\ve_m)$ and $(W\cdot\na G^\ve_m,G^\ve_m)$, and the cancellation of $\sum_{n=1}^3((\eta_\ve*G^\ve_n)\cdot\na G^\ve_n,U^\ve)$ and $((\eta_\ve*G^\ve_m)\cdot\na U^\ve,G^\ve_m)$. Using \thref{lem_2.5} with $\de=\frac18$, we get 
\EQ{\label{etm_2.28_vNSEd}
&~~~\left|-\left(U^\ve_k\cdot\na W-\underset{n=1}{\overset{3}\sum}(E_n\cdot\na (G^\ve_n)_k+(G^\ve_n)_k\cdot\na E_n),U^\ve_k\right)\right.\\
&~~~~\left.-\underset{n=1}{\overset{3}\sum}((U^\ve_k\cdot\na E_n-E_n\cdot\na U^\ve_k)-(G^\ve_n)_k\cdot\na W,(G^\ve_n)_k)\right|\\
&\le\frac1{16}\left(7\,\|U^\ve_k\|_{H^1}^2+3\,\underset{n=1}{\overset{3}\sum}\|(G^\ve_n)_k\|_{H^1}^2\right),
}
and
\EQ{\label{etm_2.29_vNSEd}
&~~~\left|-\left<\mathcal{R}_3(W,E_1,E_2,E_3),U^\ve_k\right>-\underset{n=1}{\overset{3}\sum}\left<\mathcal{R}_4(W,E_n),(G^\ve_n)_k\right>\right|\\
&\le C_2+\frac1{128}\left(5\,\|U^\ve_k\|_{H^1}^2+3\,\underset{n=1}{\overset{3}\sum}\|(G^\ve_n)_k\|_{H^1}^2\right),
}
where $C_2=32\left(\|\mathcal{L}W\|_{H^{-1}}^2+\sum_{n=1}^3\|\mathcal{L}E_n\|_{H^{-1}}^2+(\|W\|_{L^4}^2+\sum_{n=1}^3\|E_n\|_{L^4}^2)^2\right)$ is independent of $s,T,k$ and $\varepsilon$.

Using the estimates \eqref{etm_2.28_vNSEd} and \eqref{etm_2.29_vNSEd}, we obtain from \eqref{eq_2.27_vNSEd} the differential inequality
\EQ{\label{eq_2.30_vNSEd}
&\frac{d}{ds}\left(\|U^\ve_k\|_{L^2}^2+\underset{n=1}{\overset{3}\sum}\|(G^\ve_n)_k\|_{L^2}^2\right)+\frac1{64}\left(\|U^\ve_k\|_{L^2}^2+\underset{n=1}{\overset{3}\sum}\|(G^\ve_n)_k\|_{L^2}^2\right)\\
&~~~~~~~~~~~~~~~~~~~~~~~~~~~~~~~~~~~~~~~~~~~~~~~~~~~+\frac1{64}\left(\|\na U^\ve_k\|_{L^2}^2+\underset{n=1}{\overset{3}\sum}\|\na(G^\ve_n)_k\|_{L^2}^2\right)\le C_2.
}
The Gronwall inequality implies that 
\EQ{\label{eq_2.31_vNSEd}
e^{s/{64}}\left(\|U^\ve_k\|_{L^2}^2+\underset{n=1}{\overset{3}\sum}\|(G^\ve_n)_k\|_{L^2}^2\right)\le&~\left(\|U^0\|_{L^2}^2+\underset{n=1}{\overset{3}\sum}\|G_n^0\|_{L^2}^2\right)+\int_0^{\tilde{T}}e^{\tau/{64}}C_2d\tau\\
\le&~\left(\|U^0\|_{L^2}^2+\underset{n=1}{\overset{3}\sum}\|G_n^0\|_{L^2}^2\right)+e^{T/{64}}C_2T
}
for all $s\in[0,\tilde{T}]$. Since the right-hand side is finite, $\tilde{T}$ is not a blow-up time and we conclude that $\tilde{T}=T$.

Choosing $\rho=\frac{C_2T}{1-e^{-T/{64}}}>0$ (independent of $k$), \eqref{eq_2.31_vNSEd} implies that \[\left(\|U^\ve_k\|_{L^2}^2+\sum_{n=1}^3\|(G^\ve_n)_k\|_{L^2}^2\right)^{1/2}\le\rho\] if $\left(\|U^0\|_{L^2}^2+\underset{n=1}{\overset{3}\sum}\|G_n^0\|_{L^2}^2\right)^{1/2}\le\rho$. Define $\mathcal{T}:B_\rho^{4k}\to B_\rho^{4k}$ by \[\mathcal{T}(\mu^\ve_k(0),(\ga^\ve_1)_k(0),(\ga^\ve_2)_k(0),(\ga^\ve_3)_k(0))=(\mu^\ve_k(T),(\ga^\ve_1)_k(T),(\ga^\ve_2)_k(T),(\ga^\ve_3)_k(T)),\] where $B_\rho^{4k}$ is the closed ball in $\R^{4k}$ of radius $\rho$ and centered at the origin. According to the continuous dependence on initial conditions of the solution of ODEs, the map $\mathcal{T}$ is continuous. Thus, we can find a fixed point of $\mathcal{T}$ by the Brouwer fixed point theorem. That is, there exist $(\mu^\ve_k(0),(\ga^\ve_1)_k(0),(\ga^\ve_2)_k(0),(\ga^\ve_3)_k(0))\in B_\rho^{4k}$ such that $(\mu^\ve_k(0),(\ga^\ve_1)_k(0),(\ga^\ve_2)_k(0),(\ga^\ve_3)_k(0))=(\mu^\ve_k(T),(\ga^\ve_1)_k(T),(\ga^\ve_2)_k(T),(\ga^\ve_3)_k(T))$. Let $U^0=\sum_{i=1}^k\mu_{ki}(0)h_i$ and $G_m^0=\sum_{i=1}^k(\ga_m)_{ki}(0)h_i$. Then $U^0,G_m^0\in\textup{span}(h_1,\cdots,h_k)$ and $U^0=U^\ve_k(T),G_m^0=(G^\ve_m)_k(T)$.

We have $\left(\|U^\ve_k(s)\|_{L^2}^2+\underset{n=1}{\overset{3}\sum}\|(G^\ve_n)_k(s)\|_{L^2}^2\right)^\frac12\le\rho$ for all $s\in[0,T]$ by the choice of $U^0$ and $G_m^0$. Hence
\begin{equation}
\left(\|U^\ve_k\|_{L^\infty(0,T;L^2(\R^3))}^2+\sum_{n=1}^3\|(G^\ve_n)_k\|_{L^\infty(0,T;L^2(\R^3))}^2\right)^\frac12\le\rho.
\end{equation}
Moreover, by integrating \eqref{eq_2.30_vNSEd} in $s\in[0,T]$ and using $U^\ve_k(0)=U^\ve_k(T),\,(G^\ve_m)_k(0)=(G^\ve_m)_k(T)$, we get 
\begin{equation}
\frac1{64}\int_0^T\left(\|U^\ve_k(s)\|_{H^1}^2+\sum_{n=1}^3\|(G^\ve_n)_k(s)\|_{H^1}^2\right)ds\le C_2T.
\end{equation}
Therefore, 
\EQ{\label{eq_2.26_vNSEd}
&\|U^\ve_k\|_{L^\infty(0,T;L^2(\R^3))}+\sum_{n=1}^3\|(G^\ve_n)_k\|_{L^\infty(0,T;L^2(\R^3))}\\&~~~~~~~~~~~~~~~~~~~~~~~~~~~~~~~~~~~+\|U^\ve_k\|_{L^2(0,T;H^1(\R^3))}+\sum_{n=1}^3+\|(G^\ve_n)_k\|_{L^2(0,T;H^1(\R^3))}\le C,
}
where $C=\sqrt{8(\rho^2+64C_2T)}$ is independent of both $\ve$ and $k$.

Since the sequences $\{U^\ve_k\}_{k\in\NN}$ and $\{(G^\ve_m)_k\}_{k\in\NN}$ are uniformly bounded, a standard limiting process shows that, for all $\ve>0$, we have, up to some subsequences, that 
\EQ{
&U^\ve_k\rightharpoonup U^\ve,\ (G^\ve_m)_k\rightharpoonup G^\ve_m\ \text{ weakly in $L^2(0,T;X)$},\\
&U^\ve_k\to U^\ve,\ (G^\ve_m)_k\to G^\ve_m\ \text{ strongly in $L^2(0,T;L^2(K))$ for all compact sets $K\subset\R^3$},\\
&U^\ve_k(s)\rightharpoonup U^\ve(s),\ (G^\ve_m)_k(s)\rightharpoonup G^\ve_m(s)\ \text{ weakly in $L^2$ for all $s\in[0,T]$}
}
as $k\to\infty$, for some $U^\ve,(G^\ve_m)\in L^2(0,T;H^1_0(\R^3)),\,m=1,2,3,$ (all have $\ve$-independent $L^\infty L^2$ and $L^2H^1$ bounds). The weak convergence ensures that $U^\ve(0)=U^\ve(T)$ and $(G^\ve_m)(0)=G^\ve_m(T)$. Furthermore, the $4$-tuple $(U^\ve,G^\ve_1,G^\ve_2,G^\ve_3)$ is a periodic weak solution of the mollified perturbed Leray system \eqref{mdf_ptb_leray_vNSEd}.

~\\
{\bf Step 2: A priori estimate of the pressure in the mollified perturbed Leray system}

By taking the divergence of $\eqref{mdf_ptb_leray_vNSEd}_1$, we obtain
\EQ{\label{eq_2.33_vNSEd}
-\De p^\ve=&\sum_{i,j=1}^k\pd_i\pd_j\,\Bigg[\,(\eta_\ve*U^\ve_i)U^\ve_j+W_iU^\ve_j+U^\ve_i W_j+W_iW_j\\
&~~~~~~~~~~~~~~~\left.-\sum_{n=1}^3\left((\eta_\ve*(G^\ve_n)_i)(G^\ve_n)_j+(E_n)_i(G^\ve_n)_j+(G^\ve_n)_i(E_n)_j+(E_n)_i(E_n)_j\right)\right].
}
Let 
\EQ{
\tilde{p}^\ve=&\sum_{i,j=1}^kR_iR_j\,\Bigg[\,(\eta_\ve*U^\ve_i)U^\ve_j+W_iU^\ve_j+U^\ve_i W_j+W_iW_j\\
&~~~~~~~~~~~~~~~~\left.-\sum_{n=1}^3\left((\eta_\ve*(G^\ve_n)_i)(G^\ve_n)_j+(E_n)_i(G^\ve_n)_j+(G^\ve_n)_i(E_n)_j+(E_n)_i(E_n)_j\right)\right],
}
where $R_i$ denote the Riesz transforms. Note that $\tilde{p}^\ve$ also satisfies \eqref{eq_2.33_vNSEd}. We will prove $\na(p^\ve-\tilde{p}^\ve)=0$ so that $p^\ve=\tilde{p}^\ve$ up to an additive constant by proving . 

Let $V^\ve(x,t)=(2t)^{-1/2}U^\ve(y,s),\,\pi^\ve(x,t)=(2t)^{-1}p^\ve(y,s)$ and $\tilde{\mathcal{F}}^\ve(x,t)=(\tilde{\mathcal{F}}_1+\tilde{\mathcal{F}}^\ve_2)(x,t)$ where 
\begin{equation}
\tilde{\mathcal{F}}_1(x,t):=-\frac1{(2t)^{3/2}}(\mathcal{L}W)(y,s),
\end{equation}
\EQ{
\tilde{\mathcal{F}}^\ve_2(x,t):=-\frac1{(2t)^{3/2}}&\, \Bigg[\, W\cdot\na U^\ve+U^\ve\cdot\na W+(\eta_\ve*U^\ve)\cdot\na U^\ve+W\cdot\na W\\
&~~\left.-\sum_{n=1}^3\left(E_n\cdot\na G^\ve_n-G^\ve_n\cdot\na E_n-(\eta_\ve*G^\ve_n)\cdot\na G^\ve_n-E_n\cdot\na E_n\right)\right](y,s),
}
and $y=x/\sqrt{2t}$ and $s=\log(\sqrt{2t})$. Hence, $\tilde{\mathcal{F}}^\ve\in L^\infty(1,\la^2;H^{-1}(\R^3))$, $(V^\ve,\pi)$ solves the non-stationary Stokes system on $\R^3\times[1,\la^2]$ with force $\tilde{\mathcal{F}}^\ve$ by $\eqref{mdf_ptb_leray_vNSEd}_1$, and $V^\ve$ is in the energy class. In view of the uniqueness of the solution to the forced, non-stationary Stokes system on $\R^3\times[1,\la^2]$, we can conclude that $\na\pi^\ve=\na\tilde{\pi}^\ve$ where $\tilde{\pi}^\ve=(2t)^{-1}\tilde{p}^\ve$. Therefore $\na(p^\ve-\tilde{p}^\ve)=0$.

At this point, we may replace $p^\ve$ by $\tilde{p}^\ve$. As before, the Calder\'on-Zygmund theory gives 
\EQN{
\|p^\ve(s)\|_{L^{5/3}}\le&~C\, \Bigg\|\,\Big[ (\eta_\ve*U^\ve_i)U^\ve_j+W_iU^\ve_j+U^\ve_i W_j+W_iW_j\\
&~~~~~-\left.\sum_{n=1}^3\left((\eta_\ve*(G^\ve_n)_i)(G^\ve_n)_j(E_n)_i(G^\ve_n)_j+(G^\ve_n)_i(E_n)_j+(E_n)_i(E_n)_j\right)\Big](s)\right\|_{L^{5/3}}
\\\le&~C\left(\|U^\ve(s)\|_{L^{10/3}}^2+\sum_{n=1}^3\|G^\ve_n(s)\|_{L^{10/3}}^2+\|W(s)\|_{L^{10/3}}^2+\sum_{n=1}^3\|E_n(s)\|_{L^{10/3}}^2\right).
}
So we get the following a priori bound for $p^\ve$:
\EQ{\label{eq_2.37_vNSEd}
\|p^\ve\|_{L^{5/3}(\R^3\times[0,T])}\le&~C\left(\|U^\ve\|_{L^{10/3}(\R^3\times[0,T])}^2+\sum_{n=1}^3\|G^\ve_n\|_{L^{10/3}(\R^3\times[0,T])}^2\right.\\
&~~~~~~+\left.\|W\|_{L^{10/3}(\R^3\times[0,T])}^2+\sum_{n=1}^3\|E_n\|_{L^{10/3}(\R^3\times[0,T])}^2\right).
}
Since the sequences $\{U^\ve\}_{\ve>0}$ and $\{G^\ve_m\}_{\ve>0},\,m=1,2,3,$ are both bounded in $L^\infty L^2$ and $L^2H^1$ norms,
\EQ{\label{bound_U^ve_vNSEd}
\|U^\ve\|_{L^{10/3}(\R^3\times[0,T])}=\left\|\|U^\ve\|_{L_y^{10/3}}\right\|_{L_s^{10/3}}\le&~\left\|\|U^\ve\|_{L_y^2}^{\frac25}\|U^\ve\|_{L_y^6}^{\frac35}\right\|_{L_s^{10/3}}\\
\le&~\|U^\ve\|_{L^\infty(0,T;L^2(\R^3))}^{\frac25}\left\|\|U^\ve\|_{L_y^6}^{\frac35}\right\|_{L_s^{10/3}}\\
\le&~\|U^\ve\|_{L^\infty(0,T;L^2(\R^3))}^{\frac25}\|U^\ve\|_{L^2(0,T;L^6(\R^3))}^{\frac35}\\
\lesssim&~\|U^\ve\|_{L^\infty(0,T;L^2(\R^3))}^{\frac25}\|U^\ve\|_{L^2(0,T;H^1(\R^3))}^{\frac35}\\
\le&~C,
}
where $C$ is some constant independent of $\ve$. Similarly, we have 
\EQ{\label{bound_A^ve_vNSEd}
\|G^\ve_m\|_{L^{10/3}(\R^3\times[0,T])}\le C,\ m=1,2,3.
}
Moreover, since we are applying \thref{lem_2.5} with $q=\frac{10}3$ and $\de=\frac18$, $\|W\|_{L^\infty(0,T;L^{10/3}(\R^3))}\le\frac18$ and $\|E_m\|_{L^\infty(0,T;L^{10/3}(\R^3))}\le\frac18$. Thus, we have the estimates
\EQ{\label{bound_WE}
\|W\|_{L^{10/3}(\R^3\times[0,T])}\le \frac18\,T^{10/3}\ \ \ \text{ and }\ \ \ \|E_m\|_{L^{10/3}(\R^3\times[0,T])}\le \frac18\,T^{10/3},\ m=1,2,3.
}

Using the bounds \eqref{bound_U^ve_vNSEd}-\eqref{bound_WE}, \eqref{eq_2.37_vNSEd} implies that $\{p^\ve\}_{\ve>0}$ is a bounded sequence in $L^{5/3}(\R^3\times[0,T])$.

~\\
{\bf Step 3: Convergence to a suitable periodic weak solution to \eqref{leray_vNSEd}}

On one hand, since the sequences $\{U^\ve\}_{\ve>0}$ and $\{G^\ve_m\}_{\ve>0},\,m=1,2,3,$ are all bounded in $L^\infty(0,T;L^2(\R^3))$- and $L^2(0,T;H^1(\R^3))$- norms, there exist $U,G_m\in L^\infty(0,T;L^2(\R^3))\cap L^2(0,T;H^1_0(\R^3)),\,m=1,2,3,$ and sequences $\{U^{\ve_k}\}_{k\in\NN},\{G^{\ve_k}_m\}_{k\in\NN},\,m=1,2,3,$ such that for $m=1,2,3,$
\EQ{\label{U_G_conv}
&U^{\ve_k}\rightharpoonup U,\ G^{\ve_k}_m\rightharpoonup G_m\ \ \ \text{ weakly in $L^2(0,T;X)$},\\
&U^{\ve_k}\to U,\ G^{\ve_k}_m\to G_m\ \ \ \text{ strongly in $L^2(0,T;L^2(K))$ for all compact sets $K\subset\R^3$},\\
&U^{\ve_k}(s)\rightharpoonup U(s),\ G^{\ve_k}_m(s)\rightharpoonup G_m(s)\ \ \ \text{ weakly in $L^2$ for all $s\in[0,T]$},
}
as $\ve_k\to0$.

On the other hand, since $\{p^{\ve_k}\}_{k\in\NN}$ is a bounded sequence in $L^{5/3}(\R^3\times[0,T])$, we have that 
\EQ{
p^{\ve_k}\rightharpoonup p\ \text{ weakly in $L^{5/3}(\R^3\times[0,T])$,}
}
for some $p\in L^{5/3}(\R^3\times[0,T])$. Let $u=U+W$ and $g_m=G_m+E_m,\,m=1,2,3$. The above convergences are strong enough to guarantee that the $5$-tuple $(u,g_1,g_2,g_3,p)$ solves \eqref{leray_vNSEd} in the sense of distributions.

What is left is to show that $(u,g_1,g_2,g_3,p)$ satisfies the local energy inequality \eqref{lei_vNSEd_leray}. Note that $(u^{\ve_k},g^{\ve_k}_1,g^{\ve_k}_2,g^{\ve_k}_3,p^{\ve_k})$, where $u^{\ve_k}=U^{\ve_k}+W$ and $g^{\ve_k}_m=G^{\ve_k}_m+E_m,\,m=1,2,3$, satisfies
\begin{equation}\label{eq_u^ve_g^ve}
\setlength\arraycolsep{1.5pt}\def\arraystretch{1.2}
\left\{\begin{array}{ll}
&\mathcal{L}u^{\ve_k}+W\cdot\na u^{\ve_k}+(\eta_{\ve_k}*U^{\ve_k})\cdot\na U^{\ve_k}+U^{\ve_k}\cdot\na W\\
&~~~~~~~~~~~~~-\underset{n=1}{\overset{3}\sum}E_n\cdot\na g^{\ve_k}_n-\underset{n=1}{\overset{3}\sum}(\eta_{\ve_k}*G^{\ve_k}_n)\cdot\na G^{\ve_k}_n-\underset{n=1}{\overset{3}\sum}G^{\ve_k}_n\cdot\na E_n+\na p^{\ve_k}=0\\
&\mathcal{L}g^{\ve_k}_m+W\cdot\na g^{\ve_k}_m+(\eta_{\ve_k}*U^{\ve_k})\cdot\na G^{\ve_k}_m+U^{\ve_k}\cdot\na E_m\\
&~~~~~~~~~~~~~~~~~~~~~~~~~~~~~~~~~~~~-E_m\cdot\na u^{\ve_k}-(\eta_{\ve_k}*G^{\ve_k}_m)\cdot\na U^{\ve_k}-G^{\ve_k}_m\cdot\na W=0.
\end{array}\right.
\end{equation}
Testing $\eqref{eq_u^ve_g^ve}_1$ and $\eqref{eq_u^ve_g^ve}_2$ for $m=1,2,3,$ with $u^{\ve_k}\psi$ and $g^{\ve_k}_m\psi,\,m=1,2,3$, respectively, where $0\le\psi\in C^\infty_0(\R^4)$ and adding them together, we get
\EQ{\label{A_51_vNSEd}
&~~~~\int_{\R^4}\left[\frac12\left(|u^{\ve_k}|^2+\underset{n=1}{\overset{3}\sum}\,|g^{\ve_k}_n|^2\right)+|\na u^{\ve_k}|^2+\underset{n=1}{\overset{3}\sum}\,|\na g^{\ve_k}_n|^2\right]\psi \,dyds\\
&=\int_{\R^4}\frac12\left(|u^{\ve_k}|^2+\underset{n=1}{\overset{3}\sum}\,|g^{\ve_k}_n|^2\right)\left(\pd_s\psi+\De\psi\right)dyds\\
&~~~+\int_{\R^4}\frac12\left(|u^{\ve_k}|^2+\underset{n=1}{\overset{3}\sum}\,|g^{\ve_k}_n|^2\right)(W-y)\cdot\na\psi\,dyds\\
&~~~+\int_{\R^4}\left[\frac12\left(|U^{\ve_k}|^2+2(U^{\ve_k}\cdot W)+\underset{n=1}{\overset{3}\sum}\,\left(|G_n^{\ve_k}|^2+2(G_n^{\ve_k}\cdot E_n)\right)\right)\left(\eta_{\ve_k}*U^{\ve_k}\right)\right.\\
&~~~~~~~~~~~~~~~~~~~~~~~~~~~~~~~~~~~~~~~~~~~~~~~~~~~~~~\left.+\frac12\left(|W|^2+\underset{n=1}{\overset{3}\sum}\,|E_n|^2\right)U^{\ve_k}\right]\cdot\na\psi\,dyds\\
&~~~+\int_{\R^4}p^{\ve_k} u^{\ve_k}\cdot\na\psi\,dyds\\
&~~~-\underset{n=1}{\overset{3}\sum}\,\int_{\R^4}\left[(u^{\ve_k}\cdot g^{\ve_k}_n)E_n+(U^{\ve_k}\cdot G^{\ve_k}_n)(\eta_{\ve_k}*G^{\ve_k}_n)+(U^{\ve_k}\cdot E_n)G^{\ve_k}_n\right.\\
&~~~~~~~~~~~~~~~~~~~~~~~~~~~~~~~~~~~~~~~~~~~~~~~~~~~~\left.+(W\cdot G^{\ve_k}_n)G^{\ve_k}_n+(W\cdot E_n)G^{\ve_k}_n\right]\cdot\na\psi\,dyds\\
&~~~+\int_{\R^4}\left((\eta_{\ve_k}*U^{\ve_k})-U^{\ve_k}\right)\cdot\left(\na W\cdot U^{\ve_k}+\underset{n=1}{\overset{3}\sum}\,\na E_n\cdot G^{\ve_k}_n\right)\psi\,dyds\\
&~~~+\underset{n=1}{\overset{3}\sum}\int_{\R^4}((\eta_{\ve_k}*G^{\ve_k}_n)-G^{\ve_k}_n)\cdot(\na U^{\ve_k}\cdot E_n+\na G^{\ve_k}_n\cdot W)\psi\,dyds.
}
Let $\mathcal{K}$ be a compact subset of $\R^4$. Using the same argument deriving \eqref{eta_U_conv_mhd} and \eqref{eta_A_conv_mhd}, we have 
\EQ{\label{eta_U_conv_vNSEd}
\|(\eta_{\ve_k}*U^{\ve_k})-U\|_{L^2(\mathcal{K})}\to0\ \text{ as }\ \ve_k\to0\ \ \ \text{ for all compact }\mathcal{K}\subset\R^4,
} 
and, for $m=1,2,3$,
\EQ{\label{eta_G_conv_vNSEd}
\|(\eta_{\ve_k}*G^{\ve_k}_m)-G_m\|_{L^2(\mathcal{K})}\to0\ \text{ as }\ \ve_k\to0\ \ \ \text{ for all compact }\mathcal{K}\subset\R^4.
}
In addition, the sequence $\{u^\ve\}_{\ve>0}$ is bounded in $L^{10/3}(\R^3\times[0,T])$ since it is bounded in $L^2(0,T;L^6(\R^3))$ and $L^\infty(0,T;L^2(\R^3))$. As before, we use the fact in the Appendix of \cite{MR673830} to show that
\EQ{\label{u_strong_5/2_vNSEd}
u^{\ve_k}\to u\ \ \ \text{ strongly in }L^{5/2}(\mathcal{K})\ \text{ as }\ve_k\to0.
}

Combining \eqref{eta_U_conv_vNSEd}-\eqref{u_strong_5/2_vNSEd} and the convergences in \eqref{U_G_conv} with the facts that $W,\,E_m$ are locally differentiable and that $\psi$ is compactly supported, each term on the right hand side of \eqref{A_51_vNSEd} converges to the corresponding term involving $u,\,U,\,g_m,\,G_m$ and $p$. Passing limit as $\ve_k\to0$, we get the desired local energy inequality \eqref{lei_vNSEd_leray} since $\int\na |u^{\ve_k}|^2dyds$ and $\int|\na g^{\ve_k}_m|^2dyds$ are lower-semicontinuous as $\ve_k\to0$. This proves \thref{thm_2.4_vNSEd}.
\end{proof}

\section{Discretely Self-Similar Solutions}
In this section, we prove \thref{thm_1.2_mhd} and \thref{thm_1.2_vNSEd}.
\subsection{Discretely self-similar solutions to the MHD equations}\label{sect_3.1}
\begin{proof}[Proof of \thref{thm_1.2_mhd}]
Let $U_0(y,s)=\sqrt{2t}(e^{t\De}v_0)(x)$ and $A_0=\sqrt{2t}(e^{t\De}b_0)(x)$. By \thref{lem_3.4}, $U_0$ and $A_0$ both satisfy \thref{assum_2.1} with $T=\log\la$ and $q=10/3$. Let $(u,a,p)$ be the $T$-periodic weak solution derived in \thref{thm_2.4_mhd}. Let $v(x,t)=u(y,s)/\sqrt{2t},\,b(x,t)=a(y,s)/\sqrt{2t}$ and $\pi(x,t)=p(y,s)/2t$ where $x,t,y,s$ satisfy \eqref{xtys}. Then $(v,b,\pi)$ is a distributional solution to \eqref{MHD}.

Note that $u-U_0$ is periodic in $s$ with period $T=\log(\la)$. So
\EQN{
\|v-e^{t\De} v_0\|_{L_t^\infty(1,\la^2;L_x^2(\R^3))}^2\le&~\la\|u-U_0\|_{L_s^\infty\left(\frac12\log2,\frac12\log2+\log(\la);L_y^2(\R^3)\right)}^2\\
\le&~\la\|u-U_0\|_{L_s^\infty\left(0,T;L_y^2(\R^3)\right)}^2.
}
Similarly, we have
\[\|v-e^{t\De} v_0\|_{L_t^2(1,\la^2;L_x^2(\R^3))}^2\le\la^3\|u-U_0\|_{L_s^2\left(0,T;L_y^2(\R^3)\right)}^2,\]
and 
\[\|\na_x\left(v-e^{t\De} v_0\right)\|_{L_t^2(1,\la^2;L_x^2(\R^3))}^2\le\la\|\na_y(u-U_0)\|_{L_s^2\left(0,T;L_y^2(\R^3)\right)}^2.\]
Therefore,
\EQ{\label{eq_1_la^2}
v-e^{t\De} v_0\in L^\infty(1,\la^2;L^2(\R^3))\cap L^2(1,\la^2;H^1(\R^3)).
}

Note that $v-e^{t\De} v_0$ is $\la$-DSS because $u-U_0$ is $T$-periodic, where $T=\log(\la)$. For $t>0$, $\la^{-2k}\le t< \la^{-2k+2}$ for some $k\in\ZZ$ so $1\le\la^{2k}t<\la^2$. Thus
\EQ{\label{eq_4.1_1}
\|v(t)-e^{t\De}v_0\|_{L^2(\R^3)}^2=&~\la^{-1}\int_{\R^3}\left|(v-e^{t\De}v_0)(x,\la^2t)\right|^2dx\\
=&~\cdots\\
=&~\la^{-k}\int_{\R^3}\left|(v-e^{t\De}v_0)(x,\la^{2k}t)\right|^2dx\\
\le&~t^{1/2}\sup_{1\le\tau\le\la^2}\|v(\tau)-e^{\tau\De}v_0\|_{L^2(\R^3)}^2.
}
Moreover, 
\EQ{\label{eq_4.1_2}
\int_{\la^{-2k}}^{\la^{-2k+2}}\int_{\R^3}\left|\na(v(t)-e^{t\De}v_0)\right|^2dxdt=&~\la^{-1}\int_{\la^{-2k+2}}^{\la^{-2k+4}}\int_{\R^3}\left|\na(v(t)-e^{t\De}v_0)\right|^2dxdt\\
=&~\cdots\\
=&~\la^{-k}\int_1^{\la^2}\int_{\R^3}\left|\na(v(t)-e^{t\De}v_0)\right|^2dxdt
}
implies that
\EQ{\label{eq_4.1_3}
\int_0^{\la^2}\int_{\R^3}\left|\na(v(t)-e^{t\De}v_0)\right|^2dxdt=&~\sum_{k=0}^\infty\int_{\la^{-2k}}^{\la^{-2k+2}}\int_{\R^3}\left|\na(v(t)-e^{t\De}v_0)\right|^2dxdt\\
=&~\left(\sum_{k=0}^\infty\la^{-k}\right)\int_1^{\la^2}\int_{\R^3}\left|\na(v(t)-e^{t\De}v_0)\right|^2dxdt.
}
Therefore, we see from \eqref{eq_4.1_1} and \eqref{eq_4.1_3} that 
\EQ{\label{eq_4.1_4}
v-e^{t\De}v_0\in L^\infty(0,\la^2;L^2(\R^3))\cap L^2(0,\la^2;H^1(\R^3)).
}

We first prove that $v$ has locally finite energy and enstrophy. In view of Remark 3.2 in \cite{MR673830}, we have
\EQN{
&~~~\sup_{x_0\in\R^3}\int_{B_R(x_0)}|e^{t\De}v_0(x)|^2dx\\&=\sup_{x_0\in\R^3}\int_{B_1(x_0)}|e^{t\De}v_0(R(\tilde{x}-x_0)+x_0)|^2R^3\,d\tilde{x}\\
&=\sup_{x_0\in\R^3}\int_{B_1(x_0)}|e^{\frac{t}{R^2}\De}\widetilde{v_0}(\tilde{x})|^2R^3\,d\tilde{x}, \text{ where }\widetilde{v_0}(\tilde{x})=v_0(R(\tilde{x}-x_0)+x_0),\\
&=R^3\,\|e^{\frac{t}{R^2}\De}\widetilde{v_0}\|_{L^2_{\textup{uloc}}}^2\\
&\lesssim R^3\,\|\widetilde{v_0}\|_{L^2_{\textup{uloc}}}^2\ \ \ (\text{by Remark 3.2 in \cite{MR673830}})\\
&=\sup_{x_0\in\R^3}\int_{B_R(x_0)}|v_0(x)|^2dx\\
&=\la^k\sup_{x_0\in\R^3}\int_{B_{\la^{-k}R}(\la^{-k}x_0)}|v_0(x)|^2dx, \text{ where $\la^{k-1}\le R<\la^k$ for some $k$, }(\text{since $v_0$ is $\la$-DSS})\\
&\le\la^k\sup_{x_0\in\R^3}\int_{B_1(\la^{-k}x_0)}|v_0(x)|^2dx\\
&=\la^k\|v_0\|_{L^2_{\textup{uloc}}}^2.
}
Combining this result with \eqref{eq_4.1_1}, we actually have
\EQ{\label{v_energy}
&~~~~\underset{0\le t<R^2}{\textup{esssup}}\,\sup_{x_0\in\R^3}\int_{B_R(x_0)}|v(x,t)|^2dx\\
&\le2\left(\underset{0\le t<R^2}{\textup{esssup}}\,\sup_{x_0\in\R^3}\int_{B_R(x_0)}|v(x,t)-e^{t\De}v_0|^2dx+\underset{0\le t<R^2}{\textup{esssup}}\,\sup_{x_0\in\R^3}\int_{B_R(x_0)}|e^{t\De}v_0|^2dx\right)\\
&\lesssim2\left(R\sup_{1\le\tau\le\la^2}\|v(\tau)-e^{\tau\De}v_0\|_{L^2(\R^3)}+R\la\|v_0\|_{\textup{uloc}}^2\right)<\infty.
}
Likewise, since 
\EQN{
\sup_{x_0\in\R^3}\int_{B_R(x_0)}|\na_x(e^{t\De}v_0(x))|^2dxdt=&~\sup_{x_0\in\R^3}\int_{B_1(x_0)}\left|R^{-1}\na_{\tilde{x}}(e^{t\De}v_0)(R(\tilde{x}-x_0)+x_0)\right|^2R^3\,d\tilde{x}\\
=&~\sup_{x_0\in\R^3}\int_{B_1(x_0)}\left|\na_{\tilde{x}}(e^{\frac{t}{R^2}\De}\widetilde{v_0})(\tilde{x})\right|^2R\,d\tilde{x}\\
\lesssim&~\frac{R}{\left(\frac{t}{R^2}\right)^{\frac12}}\,\|\widetilde{v_0}\|_{L^2_{\textup{uloc}}}^2\ \ \ (\text{by Remark 3.2 in \cite{MR673830}})\\
\le&~\frac{\la}{t^{\frac12}}\,\|v_0\|_{L^2_{\textup{uloc}}}^2,
}
it follows from \eqref{eq_4.1_2} that
\EQ{\label{v_enstrophy}
&~~~~\sup_{x_0\in\R^3}\int_0^{R^2}\int_{B_R(x_0)}|\na v(x,t)|^2dxdt\\
&\le2\left(\sup_{x_0\in\R^3}\int_0^{R^2}\int_{B_R(x_0)}|\na(v-e^{t\De}v_0)|^2dxdt+\sup_{x_0\in\R^3}\int_0^{R^2}\int_{B_R(x_0)}|\na(e^{t\De}v_0)|^2dxdt\right)\\
&\le2\left(\sum_{m=0}^\infty\la^{-(k+m)}\int_1^{\la^2}\int_{\R^3}\left|\na(v(t)-e^{t\De}v_0)\right|^2dxdt+2R\la\|v_0\|_{L^2_{\textup{uloc}}}^2\right)\\
&\le2\left(\frac{R\la}{\la-1}\int_1^{\la^2}\int_{\R^3}\left|\na(v(t)-e^{t\De}v_0)\right|^2dxdt+2R\la\|v_0\|_{L^2_{\textup{uloc}}}^2\right)<\infty,
}
where $k$ is some integer so that $\la^{k-1}\le R<\la^k$. The same conclusion of \eqref{v_energy} and \eqref{v_enstrophy} can be drawn for $b(t)-e^{t\De}b_0$. This proves \eqref{lfee_mhd}.

Secondly, we prove the convergence to initial data. Let $K$ be a compact subset of $\R^3$. We split $\|v(t)-v_0\|_{L^2_{\textup{loc}}}$ into two parts: $\|v(t)-e^{t\De}v_0\|_{L^2_{\textup{loc}}}$ and $\|e^{t\De}v_0-v_0\|_{L^2_{\textup{loc}}}$. The first part is controlled by \eqref{eq_4.1_1} as 
\EQ{\label{conv_initial_1}
\|v(t)-e^{t\De}v_0\|_{L^2(K)}\lesssim t^{1/4}\to0\ \text{ as $t\to0^+$}.
}
For the second part, we use the fact that $e^{t\De}v_0\to v_0$ in $L^2_{-3/2}$ as $t\to 0^+$ mentioned in the Remark 2.3 of \cite{MR1179482}. Moreover, we have the embeddings $L^3_w\subset M^{2,1}\subset L^2_{-3/2}\subset L^2_{\textup{loc}}$ (see \ref{sec_append} Appendix). Hence $e^{t\De}v_0\to v_0$ in $L^2_{-3/2}$ as $t\to 0^+$ implies
\EQ{\label{conv_initial_2}
e^{t\De}v_0\to v_0\ \text{ in }L^2_{\textup{loc}}\ \text{ as $t\to0^+$}.
}
Therefore, combining \eqref{conv_initial_1} and \eqref{conv_initial_2}, we have \EQ{\label{conv_initial_v}
v\to v_0\ \text{ in }L^2_{\textup{loc}}\ \text{ as $t\to0^+$}.
}
The same convergence \eqref{conv_initial_v} is true for $b$. This establishes the convergence to initial data.

Next, we prove the decay at spatial infinity. Fix any $R>0$. We split $v$ into two parts: $v-e^{t\De}v_0$ and $e^{t\De}v_0$. For the first part, $v-e^{t\De}v_0\in L^2(0,R^2;L^2(\R^3))$ since \[\int_0^{R^2}\int_{\R^3}|(v-e^{t\De}v_0)(x,t)|^2dxdt\le\int_0^{R^2}t^{1/2}\underset{1\le\tau\le\la^2}{\sup}\|(v-e^{\tau\De}v_0)(x,\tau)\|_{L^2(\R^3)}^2dt<\infty\] by \eqref{eq_4.1_1}. The dominated convergence theorem then implies 
\[\int_0^{R^2}\int_{B_R(x_0)}|(v-e^{t\De}v_0)(x,t)|^2dxdt=\int_0^{R^2}\int_{\R^3}|(v-e^{t\De}v_0)(x,t)|^21_{B_R(x_0)}(x)\,dxdt\to0\]
as $|x_0|\to\infty$. For the second part, since $v_0$ is $
\la$-DSS, $e^{t\De}v_0$ is also $\la$-DSS and $U_0$ is periodic in $s$ with the period $T=\log(\la)$. So \eqref{eq_1_la^2} and \eqref{eq_4.1_1} also hold for $e^{t\De}v_0$. In the same manner above, we can show \[\int_0^{R^2}\int_{B_R(x_0)}|e^{t\De}v_0(x)|^2dxdt\to0\] as $|x_0|\to\infty$. Since the same proof works for $b$, we can conclude that \eqref{dasi_mhd} holds.

Finally, the local energy inequality \eqref{lei_mhd} for \eqref{MHD} follows from the local energy inequality \eqref{lei_mhd_leray} for \eqref{leray_mhd}.
\end{proof}

\subsection{Discretely self-similar solutions to the viscoelastic Navier-Stokes equations with damping}
\begin{proof}[Proof of \thref{thm_1.2_vNSEd}]
Let $U_0(y,s)=\sqrt{2t}(e^{t\De}v_0)(x)$ and $(G_m)_0=\sqrt{2t}(e^{t\De}(f_0)_m)(x),\,m=1,2,3,$ where $(f_0)_m$ is the $m$-th column of ${\bf F}_0$. By \thref{lem_3.4}, $U_0$ and $(G_m)_0,\,m=1,2,3,$ all satisfy \thref{assum_2.1} with $T=\log\la$ and $q=10/3$. Let $(u,g_1,g_2,g_3,p)$ be the $T$-periodic weak solution derived in \thref{thm_2.4_vNSEd}. Let $v(x,t)=u(y,s)/\sqrt{2t},\,{\bf F}(x,t)={\bf G}(y,s)/\sqrt{2t}$ and $\pi(x,t)=p(y,s)/2t$ where ${\bf G}=(g_1,g_2,g_3)$ and $x,t,y,s$ satisfy \eqref{xtys}. We skip the rest of the proof as it is essentially the same as that in Sect. \ref{sect_3.1}.
\end{proof}

\section{Self-Similar Solutions}
In this section, we prove \thref{thm_1.3_mhd} and \thref{thm_1.3_vNSEd}.
\subsection{Self-similar solutions to the MHD equations}\label{sect_4.1}
\begin{proof}[Proof of \thref{thm_1.3_mhd}]
Let $U_0$ and $A_0$ be defined as in Sect. \ref{sect_3.1}. Since $v_0$ and $b_0$ are $(-1)$-homogeneous, \[U_0(y)=2^{3/2}(4\pi)^{-3/2}\int_{\R^3}e^{-|y-z|^2/2}v_0(z)dz\text{ and }A_0(y)=2^{3/2}(4\pi)^{-3/2}\int_{\R^3}e^{-|y-z|^2/2}b_0(z)dz\] are independent of $s$. By \thref{lem_3.4}, $U_0$ and $A_0$ both satisfy \thref{assum_2.1} for any $q\in(3,\infty]$ because $v_0$ and $b_0$ are $\la$-DSS for all $\la>1$. Let $W$ and $D$ be defined as in \eqref{W_def} and \eqref{D_def}, respectively. Then $W$ and $D$ are independent of $s$. Furthermore, according to \thref{lem_2.5}, $W$ and $D$ satisfy the estimates \eqref{eq_2.8}-\eqref{eq_2.10} with $q\in(3,\infty]$. Our goal is to solve the following variational form of the stationary Leray system for the MHD equations
\EQ{\label{eq_5.3_mhd}
\setlength\arraycolsep{1.5pt}\def\arraystretch{1.2}
\left\{\begin{array}{ll}
-(\na u,\na f)+(u+y\cdot\na u-u\cdot\na u+a\cdot\na a,f)&=0\\
-(\na a,\na f)+(a+y\cdot\na a-u\cdot\na a+a\cdot\na u,f)&=0,
\end{array}
\right.
}
for all $f\in\mathcal{V}$. Similar to the proof of \thref{thm_2.4_mhd}, we are looking for a solution of the form $u=W+U$ and $a=D+A$ and using Galerkin method to achieve this. Note that $(U,A)$ satisfies the perturbed stationary Leray system for the MHD equations, which has the weak formulation as 
\EQ{\label{eq_5.4_mhd}
\setlength\arraycolsep{1.5pt}\def\arraystretch{1.2}
\left\{\begin{array}{ll}
&-(\na U,\na f)+(U+y\cdot\na U,f)-(U\cdot\na U-A\cdot\na A,f)\\
&~~~~~~~~~~~~~~~~~~~~~~=(W\cdot\na U+U\cdot\na W-D\cdot\na A-A\cdot\na D,f)+\left<\mathcal{R}_1(W,D),f\right>\\
&-(\na A,\na f)+(A+y\cdot\na A,f)-(U\cdot\na A-A\cdot\na U,f)\\
&~~~~~~~~~~~~~~~~~~~~~~=(W\cdot\na A+U\cdot\na D-D\cdot\na U-A\cdot\na W,f)+\left<\mathcal{R}_2(W,D),f\right>,
\end{array}
\right.
}
for all $f\in\mathcal{V}$, where $\mathcal{R}_1$ and $\mathcal{R}_2$ are the same as in \eqref{eq_R1_R2}. Let $\{h_k\}_{k\in\NN}\subset\mathcal{V}$ be an orthonormal basis of $H$. For a fixed $k$, we look for an approximation solution of the form $U_k(y)=\sum_{i=1}^k\mu_{ki}h_i(y),\,A_k(y)\sum_{i=1}^k\al_{ki}h_i(y)$. Plugging them into the weak formulation, we get the following algebraic system: 
\begin{equation}\label{eq_5.5_mhd}
\setlength\arraycolsep{1.5pt}\def\arraystretch{1.2}
\left\{\begin{array}{ll}
\underset{i=1}{\overset{k}\sum}\mathscr{A}_{ij}\mu_{ki}+\underset{i=1}{\overset{k}\sum}\mathscr{B}_{ij}\al_{ki}+\underset{i,l=1}{\overset{k}\sum}\mathscr{C}_{ilj}\mu_{ki}\mu_{kl}-\underset{i,l=1}{\overset{k}\sum}\mathscr{C}_{ilj}\al_{ki}\al_{kl}+\mathscr{D}_j&=0\\
\underset{i=1}{\overset{k}\sum}\mathscr{E}_{ij}\mu_{ki}+\underset{i=1}{\overset{k}\sum}\mathscr{F}_{ij}\al_{ki}+\underset{i,l=1}{\overset{k}\sum}\mathscr{G}_{ilj}\mu_{ki}\al_{kl}+\mathscr{H}_j&=0,
\end{array}\right.
\end{equation}
for $j=1,\cdots,k$, where $\mathscr{A}_{ij},\,\mathscr{B}_{ij},\,\mathscr{D}_j,\,\mathscr{E}_{ij},\,\mathscr{F}_{ij},\,\mathscr{H}_j$ are the same as those in \eqref{galerkin_ode_coeff_mhd}, and 
\EQ{
\mathscr{C}_{ilj}&=-(h_i\cdot\na h_l,h_j),\\
\mathscr{G}_{ilj}&=-(h_i\cdot\na h_l,h_j)+(h_l\cdot\na h_i,h_j).
}
Let $P:\R^{2k}\to\R^{2k}$ be defined by
\EQN{
&(P(\mu_{k1},\cdots,\mu_{kk},\al_{k1},\cdots,\al_{kk}))_j\\
&~~~~=\left\{\begin{array}{ll}\underset{i=1}{\overset{k}\sum}\mathscr{A}_{ij}\mu_{ki}+\underset{i=1}{\overset{k}\sum}\mathscr{B}_{ij}\al_{ki}+\underset{i,l=1}{\overset{k}\sum}\mathscr{C}_{ilj}\mu_{ki}\mu_{kl}-\underset{i,l=1}{\overset{k}\sum}\mathscr{C}_{ilj}\al_{ki}\al_{kl}+\mathscr{D}_j,&\ j=1,\cdots,k,\\
\underset{i=1}{\overset{k}\sum}\mathscr{E}_{i(j-k)}\mu_{ki}+\underset{i=1}{\overset{k}\sum}\mathscr{F}_{i(j-k)}\al_{ki}+\underset{i,l=1}{\overset{k}\sum}\mathscr{G}_{il(j-k)}\mu_{ki}\al_{kl}+\mathscr{H}_{j-k},&\ j=k+1,\cdots,2k.\end{array}\right.
}
From similar estimates as in \eqref{etm_2.28_mhd} and \eqref{etm_2.29_mhd}, we have  that
\EQ{\label{eq_5.6_mhd}
&P(\mu_{k1},\cdots,\mu_{kk},\al_{k1},\cdots,\al_{kk})\cdot(\mu_{k1},\cdots,\mu_{kk},\al_{k1},\cdots,\al_{kk})\\
=&~-\frac12\left(\|U_k\|_{L^2}^2+\|A_k\|_{L^2}^2\right)-\left(\|\na U_k\|_{L^2}^2+\|\na A_k\|_{L^2}^2\right)\\
&~-(U_k\cdot\na W-D\cdot\na A_k-A_k\cdot\na D,U_k)-(U_k\cdot\na D-D\cdot\na U_k-A_k\cdot\na W,A_k)\\
&~-\left<\mathcal{R}_1(W,D),U_k\right>-\left<\mathcal{R}_2(W,D),A_k\right>\\
\le&~-\frac12\left(\|U_k\|_{L^2}^2+\|A_k\|_{L^2}^2\right)-\left(\|\na U_k\|_{L^2}^2+\|\na A_k\|_{L^2}^2\right)+\frac38\left(\|U_k\|_{H^1}^2+\|A_k\|_{H^1}^2\right)\\
&~+C_2+\frac3{32}\left(\|U_k\|_{H^1}^2+\|A_k\|_{H^1}^2\right)\\
=&~-\frac1{32}\left(\|U_k\|_{L^2}^2+\|A_k\|_{L^2}^2\right)-\frac{17}{32}\left(\|\na U_k\|_{L^2}^2+\|\na A_k\|_{L^2}^2\right)+C_2\\
\le&~-\frac1{32}\left|(\mu_{k1},\cdots,\mu_{kk},\al_{k1},\cdots,\al_{kk})\right|^2+C_2\\
<&~0,
}
if $\left|(\mu_{k1},\cdots,\mu_{kk},\al_{k1},\cdots,\al_{kk})\right|=8\sqrt{C_2}=:\rho$. Note that $C_2$ is independent of $k$. Thus, we obtain a point $(\mu_{k1},\cdots,\mu_{kk},\al_{k1},\cdots,\al_{kk})\in B_\rho^{2k}$ such that $P(\mu_{k1},\cdots,\mu_{kk},\al_{k1},\cdots,\al_{kk})=0$ by Brouwer's fixed point theorem. Then $U_k(y)=\sum_{i=1}^k\mu_{ki}h_i(y),\,A_k(y)\sum_{i=1}^k\al_{ki}h_i(y)$ is our approximation solution of \eqref{eq_5.4_mhd} with a priori bound 
\[\left(\|U_k\|_{L^2}^2+\|A_k\|_{L^2}^2\right)+17\left(\|\na U_k\|_{L^2}^2+\|]\na A_k\|_{L^2}^2\right)\le32\,C_2.\]
Therefore, we have, up to a subsequence, the following convergences
\EQ{
&U_k\rightharpoonup U,\ A_k\rightharpoonup A\ \text{ weakly in $H^1(\R^3)$},\\
&U_k\to U,\ A_k\to A\ \text{ strongly in $L^2(K)$ for all compact sets $K\subset\R^3$}.
}
So we derive a solution $(U,A)$ to \eqref{eq_5.4_mhd} with $U,A\in H^1(\R^3)$. Then $(u,a)$, where $u=U+W$ and $a=A+D$, is a solution to \eqref{eq_5.3_mhd}. Note that $u,a\in H^1_{\textup{loc}}\cap L^q$ for all $3<q\le 6$ since $U,A\in H^1\subset L^q$ for $q\le6$ and $W,D\in L^q\cap L^4\cap C^\infty_{\text{loc}}$ for $q>3$. 

Regarding the pressure, we define \[p=\sum_{i,j=1}^3R_iR_j(u_iu_j-a_ia_j),\] where $R_i$ stands for the Riesz transforms. Then $(u,a,p)$ satisfies the stationary Leray system for the MHD equations \eqref{eq_1.7_mhd} in the sense of distributions. Moreover, Calderon-Zygmund estimates gives the following a priori bound for $p$: for $3<q\le6$ \[\|p\|_{L^{q/2}(\R^3)}\le C\|u\|_{L^q(\R^3)}^2.\]

Recovering $(v,b,\pi)$ from $(u,a,p)$ by the relation \eqref{eq_1.6_mhd}, we obtain a self-similar weak solution of \eqref{MHD} (see \cite[pp.33-34]{MR1643650}). It remains to show that $(v,b,\pi)$ is a local Leray solution of \eqref{MHD}.

Recall that $(U,p)$ is a solution of the stationary Stokes system with the force 
\[\mathcal{G}_1=U+y\cdot\na U-(U\cdot\na U-A\cdot\na A)-W\cdot\na U-U\cdot\na W+D\cdot\na A+A\cdot\na D-\mathcal{L}W-W\cdot\na W+D\cdot\na D.\]
Applying the regularity result in \cite[Proposition 1.2.2]{MR0609732} on compact subsets of $\R^3$, $u$ and $p$ are actually smooth. Additionally, $A$ is a solution of the Poisson equation with the right hand side 
\[\mathcal{G}_2=A+y\cdot\na A-(U\cdot\na A-A\cdot\na U)-W\cdot\na A-U\cdot\na D+D\cdot\na U+A\cdot\na W-\mathcal{L}D-W\cdot\na D+D\cdot\na W.\]
A standard elliptic regularity result leads to the smoothness for $A$ on compact subsets of $\R^3$. Thus, $u,a$ and $p$ inherit the smoothness from $U,W,A$ and $D$. Therefore, from the self-similarity of $v,b$ and $\pi$, they are smooth in both spatial and time variables. Consequently, the local energy inequality \eqref{lei_mhd} can be achieved via integrating by parts. The rest of conditions from \thref{def_loc_leray_mhd} and the estimates of the distance between the solution $(v,b)$ and the background $(e^{t\De}v_0,e^{t\De}b_0)$ can be verified using the same approach as in Sect. \ref{sect_3.1}.
\end{proof}

\subsection{Self-similar solutions to the viscoelastic Navier-Stokes equations with damping}
\begin{proof}[Proof of \thref{thm_1.3_vNSEd}]
The proof is basically the same as in Sect. \ref{sect_4.1}. It is worth noting that in \eqref{eq_5.6_mhd} we use the estimates \eqref{etm_2.28_mhd} and \eqref{etm_2.29_mhd} obtained by applying \thref{lem_2.5} with $\de=\frac14$; while here we acheive \eqref{eq_5.6_mhd} from estimates \eqref{etm_2.28_vNSEd} and \eqref{etm_2.29_vNSEd} by applying the same lemma but with the parameter $\de=\frac18$. The details of verification are left to the reader.
\end{proof}

\section{Appendix}\label{sec_append}
In this appendix, we prove the three inclusions $L^3_w\subset M^{2,1}\subset L^2_{-3/2}\subset L^2_{\textup{loc}}$. To begin with, the first inclusion can be shown by the inequality
\EQN{
r^{-1}\int_{B_r(x_0)}|f(x)|^2dx=&~r^{-1}\int_{B_r(x_0)}\int_0^{|f(x)|}2\al\,d\al dx\\
=&~r^{-1}\int_{B_r(x_0)}\int_0^\infty2\al1_{|f|>\al}(x)\,d\al dx\\
=&~r^{-1}\int_0^\infty2\al|\{|f|>\al\}\cap B_r(x_0)|d\al\\
=&~r^{-1}\int_0^{r^{-1}}2\al|\{|f|>\al\}\cap B_r(x_0)|d\al+r^{-1}\int_{r^{-1}}^\infty2\al|\{|f|>\al\}\cap B_r(x_0)|d\al\\
\le&~r^{-1}\int_0^{r^{-1}}2\al|B_r(x_0)|d\al+r^{-1}\int_{r^{-1}}^\infty2\al|\{|f|>\al\}|d\al\\
\le&~r^{-1}|B_r(x_0)|r^{-2}+r^{-1}\int_{r^{-1}}^\infty2\al\|f\|_{L^3_w}^3\al^{-3}d\al\\
\lesssim&~1+\|f\|_{L^3_w}^3.
} 
Next, the second inclusion is valid as
\EQN{
\int_{\R^3}\frac{|f(x)|^2}{(1+|x|)^3}\,dx=&~\int_{|x|<1}\frac{|f(x)|^2}{(1+|x|)^3}\,dx+\sum_{k=0}^\infty\int_{2^k\le|x|<2^{k+1}}\frac{|f(x)|^2}{(1+|x|)^3}\,dx\\
\le&~\int_{B_1(0)}|f(x)|^2dx+\sum_{k=0}^\infty\frac1{(1+2^k)^3}\int_{2^k\le|x|<2^{k+1}}|f(x)|^2dx\\
\le&~\|f\|_{M^{2,1}}^2+\sum_{k=0}^\infty\frac1{(1+2^k)^3}\,2^{k+1}\|f\|_{M^{2,1}}^2\\
\lesssim&~\|f\|_{M^{2,1}}^2.
}
Finally, the third inclusion holds since
\EQN{
\int_{|x|\le M}|f(x)|^2dx\le(1+M)^3\int_{\R^3}\frac{|f(x)|^2}{(1+|x|)^3}\,dx=(1+M)^3\|f\|_{L^2_{-3/2}}^2.
}

\section*{Acknowledgments}
The research was partially supported by FYF (\#6456) of Graduate and Postdoctoral Studies, University of British Columbia (BC). The author would like to express his fully gratitude to Tai-Peng Tsai for kindly discussion. Also, he thanks Anyi Bao for her proofreading.

 \bibliography{local_leray_mhd}
\bibliographystyle{abbrv}
\nocite{*}



 
\end{document}